\numberwithin{equation}{section}
\numberwithin{equation}{section}
\newtheorem{theorem}{Theorem}[section]
\newtheorem{lemma}[theorem]{Lemma}
\newtheorem{proposition}[theorem]{Proposition}
\newtheorem{corollary}[theorem]{Corollary}
\newtheorem*{theorem*}{Theorem}
\newtheorem*{question*}{Question}
\newtheorem*{corollary*}{Corollary}
\theoremstyle{definition}
\newtheorem{definition}[theorem]{Definition}
\theoremstyle{remark}
\newtheorem{example}[theorem]{Example}
\theoremstyle{remark}
\newtheorem{remark}[theorem]{Remark}
\begin{document}

\title{Crossed products as compact quantum metric spaces}

\author{Mario Klisse}

\address{KU Leuven, Department of Mathematics,
Celestijnenlaan 200B, 3001 Leuven, Belgium}

\email{mario.klisse@kuleuven.be}

\begin{abstract}

By employing the external Kasparov product, in \cite{HSWZ13},  Hawkins, Skalski, White, and Zacharias constructed spectral triples on crossed product C$^\ast$-algebras by equicontinuous actions of discrete groups. They further raised the question for whether their construction turns the respective crossed product into a compact quantum metric space in the sense of Rieffel. By introducing the concept of groups separated with respect to a given length function, we give an affirmative answer in the case of virtually Abelian groups equipped with certain orbit metric length functions. We further complement our results with a discussion of natural examples such as generalized Bunce-Deddens algebras and higher-dimensional non-commutative tori.

\end{abstract}

\date{\today. \emph{MSC2010:}  46L05, 46L87, 46L89, 47L65, 58B34. The author is supported by FWO research project G090420N of the Research Foundation Flanders.}

\maketitle

\section*{Introduction}

\vspace{3mm}

The standard Dirac operator
of a compact spin manifold encodes large parts of its geometrical
structure. Motivated by this, Connes introduced the notion
of spectral triples: a \emph{spectral triple} $(\mathcal{A},\mathcal{H},D)$
on a separable unital C$^{\ast}$-algebra $A$ consists of a norm
dense unital $\ast$-subalgebra $\mathcal{A}$ of $A$ that is boundedly
represented on a Hilbert space $\mathcal{H}$, and a densely defined
self-adjoint operator $D$ on $\mathcal{H}$ that has compact resolvent
and for which all commutators of elements in $\mathcal{A}$ with $D$ extend
to bounded operators. It is \emph{even}, if the triple carries the
additional structure of a $\mathbb{Z}_{2}$-grading. The concept (also
referred to as \emph{unbounded Fredholm modules}) is one of the fundamental
building blocks in the theory of non-commutative geometry.

Following Connes (see \cite{Connes89}), with a given spectral triple, one
can associate a pseudo-metric on the state space $\mathcal{S}(A)$
of $A$ via 
\begin{equation}
(\psi,\psi^{\prime})\mapsto\sup\{|\psi(a)-\psi^{\prime}(a) | \mid a\in\mathcal{A}\text{ with }\Vert[D,a]\Vert\leq1\}.\label{eq:PseudoMetric}
\end{equation}
This generalizes the Monge--Kantorovich metric on the space of probability
measures of a given compact metric space $X$ (see \cite{Kantorovitch45}), and
in this case, the induced topology coincides with the weak-$\ast$
topology. In the non-commutative setting, the latter statement does
not necessarily hold anymore. This observation inspired Rieffel to
introduce the notion of compact quantum metric spaces. Even though
the definition in \cite[Definition 2.2]{Rieffel04} is given in the general setting
of order unit spaces, in the present article, we will exclusively
be concerned with Lip-norms induced by spectral triples: for a spectral
triple $(\mathcal{A},\mathcal{H},D)$ on a C$^{\ast}$-algebra $A$,
the pair $(A,L_{D})$ with the \emph{Lipschitz semi-norm} $L_{D}:\mathcal{A} \ni a\mapsto\Vert[D,a]\Vert$
is called a compact quantum metric space if the pseudo-metric in \eqref{eq:PseudoMetric}
induces the weak-$\ast$ topology on the state space of $A$; in this
case, $L_{D}$ is called a \emph{Lip-norm}. \\

In \cite{CMRV08} and \cite{BMR10}, spectral triples on certain crossed products
by the integers were constructed from suitable triples on the corresponding
coefficient algebra. This approach was extended in \cite{HSWZ13},
where Hawkins, Skalski, White, and Zacharias make use of the external
Kasparov product to construct odd and even spectral triples on crossed
products by equicontinuous actions of discrete groups that are equipped
with a proper translation bounded function. Their construction translates
verbatim into the setting of groups equipped with proper length functions
as in \cite{Connes89}. Under the additional assumption that the
Lipschitz semi-norm induced by the original spectral triple on the
coefficient algebra provides a Lip-norm, the authors formulate the following
natural question, which they answer affirmatively for $G=\mathbb{Z}$
equipped with the word length function associated with the standard
generating set $\{-1,1\}$.

\begin{question*} \label{Question} Let $(\mathcal{A},\mathcal{H}_{A},D_{A})$
be a spectral triple on a separable unital C$^{\ast}$-algebra $A$
and assume that the induced Lipschitz semi-norm $L_{D_{A}}(a):=\left\Vert [D_{A},a]\right\Vert ,a\in\mathcal{A}$
defines a compact quantum metric space $(A,L_{D_{A}})$. Let further
$\alpha:G\rightarrow\text{Aut}(A)$ be a metrically equicontinuous
action of a discrete group $G$, equipped with a proper length function
$\ell:G\rightarrow\mathbb{R}_{+}$. Under what conditions does the
spectral triple defined in \cite{HSWZ13} define a compact quantum
metric space? \end{question*}
\noindent Similar questions were addressed in \cite{BMR10}
and \cite{KK21}, where the latter reference also provides a set of assumptions
ensuring that a continuous family of $\ast$-automorphisms of a compact
quantum metric space yields a field of crossed product algebras which
varies continuously in Rieffel's quantum Gromov-Hausdorff
distance.

In \cite{Rieffel02}, Rieffel examined quantum metric space structures
of (twisted) group C$^{\ast}$-algebras of free Abelian groups induced
by spectral triples coming from word length functions and restrictions
of norms on Euclidean spaces. His results were later extended to word
hyperbolic groups (see \cite{OzawaRieffel}) and groups of polynomial growth (see \cite{ChristRieffel}).
The proof in \cite{Rieffel02} strongly relies on the study of Gromov's
horofunction compactification (or metric compactification) of free
Abelian groups and fixed points under the corresponding group action;
this study was extended to finitely generated nilpotent groups in
\cite{Walsh07}. For a given discrete group $G$ endowed with a proper length
function $\ell$, the continuous functions on the corresponding horofunction
compactification can be viewed as a C$^{\ast}$-subalgebra of $\ell^{\infty}(G)$.
\\

The objective of the present article is to approach the question above,
mostly in the setting of virtually Abelian groups. Our approach is
inspired by those in \cite{Rieffel02}, \cite{OzawaRieffel}, and
employs metric geometry results on the approximation of length functions
by their stable semi-norms (see \cite{Burago}, \cite{LOZ21}). However,
compared to the group C$^{\ast}$-algebraic setting, the more complicated
crossed product setup causes increased technical difficulties. As
our main tool, we introduce the notion of groups that are separated
with respect to length functions: we say that the pair $(G,\ell)$
is \emph{separated} if the space of restrictions of the invariant means on
$\ell^{\infty}(G)$ to the continuous functions on the horofunction
compactification is in a certain sense sufficiently rich; for the
precise definition, see Definition \ref{SeparatedDefinition}. With this notion at hand, we
prove (among other things) the following theorem.

\begin{theorem*}[see Theorem \ref{MainTheorem} and Theorem \ref{MainTheorem2}]
Let $(\mathcal{A},\mathcal{H}_{A},D_{A})$ be a non-degenerate odd
(resp. even) spectral triple on a separable unital C$^{\ast}$-algebra
$A$ and assume that the induced Lipschitz semi-norm $L_{D_{A}}(a):=\left\Vert [D_{A},a]\right\Vert ,a\in\mathcal{A}$
defines a compact quantum metric space $(A,L_{D_{A}})$. Let further
$\alpha:G\rightarrow\text{Aut}(A)$ be a metrically equicontinuous
action of a finitely generated discrete group $G$, equipped with
a proper length function $\ell:G\rightarrow\mathbb{R}_{+}$, and assume
that there exists a finite index subgroup $H$ of $G$ that is separated
with respect to the restriction $\ell|_{H}$ and whose commutator
subgroup $[H,H]$ is finite. Then the even (resp. odd) spectral triple
defined in \cite{HSWZ13} is a spectral metric space. \end{theorem*}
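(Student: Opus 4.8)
The plan is to show that the Lipschitz seminorm $L_D$ attached to the Dirac operator constructed in \cite{HSWZ13} on $A\rtimes_\alpha G$ is a Lip-norm, that is, that $(A\rtimes_\alpha G,L_D)$ is a compact quantum metric space. By the well-known characterization of Lip-norms, this reduces to verifying that the Lip-ball $\mathcal{B}=\{x\in\mathcal{A}\rtimes_{\mathrm{alg}}G:L_D(x)\le 1\}$ has totally bounded image in the quotient $(A\rtimes_\alpha G)/\mathbb{C}1$; that $L_D$ vanishes only on the scalars follows from non-degeneracy of the triple, and finiteness of the diameter from properness of $\ell$ together with the compact quantum metric space property of $(A,L_{D_A})$. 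Thus the entire difficulty is concentrated in this total-boundedness statement.

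First I would reduce from $G$ to the finite-index subgroup $H$. Choosing coset representatives for $G/H$ and using that $\ell|_H$ controls $\ell$ up to a bounded additive perturbation across the finitely many cosets, one presents $A\rtimes_\alpha G$ as a finitely generated module over $A\rtimes_{\alpha|_H}H$; total boundedness of $\mathcal{B}$ then follows from the corresponding statement for $H$ together with the finiteness of $G/H$. This is precisely why the hypotheses of the theorem are imposed on $H$ rather than on $G$.

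Next I would bring the finite-by-Abelian structure of $H$ into play. Since $H$ is finitely generated and $[H,H]$ is finite, $H$ is virtually Abelian and its stable seminorm descends to a genuine norm on the vector space $\mathbb{R}^k=(H/[H,H])\otimes_{\mathbb{Z}}\mathbb{R}$. On the Fourier side an element of $\mathcal{A}\rtimes_{\mathrm{alg}}H$ is a finitely supported sum $\sum_h a_h u_h$, and its commutator with the product Dirac operator splits into a coefficient contribution governed by the family $\{[D_A,a_h]\}_h$ and a group contribution governed by the discrete gradient $h\mapsto\ell(hg)-\ell(h)$ of $\ell$. Consequently the constraint $L_D(x)\le 1$ simultaneously bounds the $L_{D_A}$-sizes of the coefficients $a_h$ and forces the Fourier tail supported on large values of $\ell$ to be uniformly small.

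The heart of the matter, and the step I expect to be the main obstacle, is to combine these two controls \emph{uniformly}. Here I would invoke the separated property of $(H,\ell|_H)$: it furnishes a sufficiently rich family of invariant means on $\ell^\infty(H)$ whose restrictions to the continuous functions on the horofunction compactification detect the asymptotic directions of $\ell$, and, through the approximation of $\ell$ by its stable seminorm (see \cite{Burago}, \cite{LOZ21}), it permits replacing $\ell$ by the genuine norm on $\mathbb{R}^k$ in the asymptotic regime controlling the Fourier tail. Metric equicontinuity of $\alpha$ guarantees that translating a coefficient $a_h$ by the action does not inflate its Lip-norm, so the stable-norm approximation in the group direction stays compatible with the compact quantum metric space structure of $(A,L_{D_A})$ in the coefficient direction. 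Truncating the Fourier expansion at large $\ell$ and covering the finitely many surviving coefficients by the totally bounded $L_{D_A}$-ball of $A$ then produces a finite net for $\mathcal{B}$. The delicate point throughout is the interplay between the operator-valued coefficients and the horofunction geometry: unlike in the scalar-coefficient setting of \cite{Rieffel02}, the net one builds in the group direction must be uniform over a noncommutative coefficient algebra, and it is exactly the conjunction of the separated property with metric equicontinuity that secures this uniformity.
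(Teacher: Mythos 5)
Your outline gets the architecture right (reduce to total boundedness of the commutator-ball in the quotient, pass to the finite-index subgroup $H$, use separatedness to control the group direction, and finish with the compact quantum metric space property of $(A,L_{D_A})$), but the central analytic step is missing. The condition $\Vert[1\otimes M_{\ell},x]\Vert\leq1$ does \emph{not} by itself ``force the Fourier tail supported on large values of $\ell$ to be uniformly small'': by Lemma~\ref{Commutator identity} this commutator equals $\sum_{h}(1\otimes\varphi_{h}^{\ell})a_{h}\lambda_{h}$, and a bound on the norm of this sum gives no direct control of tails or of individual terms. What the paper actually does is the following chain: separatedness produces linear combinations $\phi_{i}$ of invariant means with $\phi_{i}(\varphi_{h}^{\ell|_{H}})=p_{i}\circ p_{H}(h)$; via Proposition~\ref{Isomorphism-1} and Fell's absorption principle these induce bounded maps $P_{i}:\mathcal{C}(A,H,\ell)\rightarrow A\rtimes_{\alpha|_{H},r}H$; applying $P_{i}$ to $[1\otimes M_{\ell},x]\lambda_{g^{-1}}$ and using the $1$-cocycle identity converts the horofunction commutator into $[1\otimes M_{p_{i}\circ p_{H}},\,\cdot\,]$ plus a bounded multiple of $x$; and only then does the Ozawa--Rieffel-type truncation (Lemma~\ref{OzawaRieffel}, averaging against the kernel $f_{N}$) convert this bound into smallness of the tail $\sum_{h:\left|p_{i}\circ p_{H}(h)\right|>N}a_{h}\lambda_{hg}$. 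None of this machinery appears in your sketch, and without it there is no route from the commutator bound to a finite net.

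A second, related gap concerns where the truncation happens and where $[H,H]$ finite is used. The truncation one can actually perform is along homomorphisms $H\rightarrow\mathbb{Z}$ factoring through the torsion-free abelianization, \emph{not} along $\ell$ itself (if one could truncate at large $\ell$ directly, properness of $\ell$ would finish the proof with no hypothesis on $[H,H]$ at all). After truncating in all $m$ abelianized directions the support is confined to finitely many cosets of $[H,H]$, and it is precisely here that finiteness of $[H,H]$ enters: it makes the surviving index set finite, after which one extracts $\Vert[D_{A},a_{h}]\Vert\leq1$ and $\Vert a_{h}\Vert\leq L$ (for $hg\neq e$) by pairing against elementary tensors and covers coefficientwise using Theorem~\ref{Characterization} applied to $A$. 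In your write-up $[H,H]$ finite is used only to conclude that $H$ is virtually Abelian, and the stable-norm approximation of \cite{Burago}, \cite{LOZ21} is invoked inside the main proof, whereas in the paper it belongs to the separate verification (Theorem~\ref{StrongerResult}, Corollary~\ref{OrbitMetric}) that concrete length functions are separated; it plays no role in the proof of the theorem itself.
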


As a consequence, we deduce the following statement on virtually Abelian
groups. It can be formulated in a more general way by replacing word
length functions with suitable orbit distance length functions; see Corollary
\ref{OrbitMetric}.

\begin{corollary*}[see Corollary \ref{VirtuallyAbelian}]
Let $(\mathcal{A},\mathcal{H}_{A},D_{A})$ be a non-degenerate odd
(resp. even) spectral triple on a separable unital C$^{\ast}$-algebra
$A$ and assume that the induced Lipschitz semi-norm $L_{D_{A}}(a):=\left\Vert [D_{A},a]\right\Vert ,a\in\mathcal{A}$
defines a compact quantum metric space $(A,L_{D_{A}})$. Let further
$\alpha:G\rightarrow\text{Aut}(A)$ be a metrically equicontinuous
action of a discrete virtually Abelian group $G$ that is finitely generated by a set $S$ with $S=S^{-1}$ and let $\ell:G\rightarrow\mathbb{R}_{+}$ be the corresponding
word length function. Then the even (resp. odd) spectral triple defined
in \cite{HSWZ13} is a spectral metric space. \end{corollary*}

The statements above can be applied to several natural examples,
some of which already occur in \cite{HSWZ13}. By using a result by
Christensen and Ivan on the construction of spectral triples on AF-algebras
(see \cite{CI06}), we can equip generalized Bunce-Deddens algebras
(as introduced in \cite{Orfanos10} and \cite{Carrion11}) associated with virtually Abelian groups
with compact quantum metric space structures. More generally, this
procedure works for all crossed products associated with suitable
actions of virtually Abelian groups on AF-algebras. Another family
of examples arises from higher-dimensional non-commutative tori; see \cite{Rieffel81} and
\cite{Rieffel90}. Any such C$^{\ast}$-algebra identifies with an iterated
crossed product by actions of the integers $\mathbb{Z}$. In particular, a
repeated application of the corollary above leads to spectral metric spaces. \\

\noindent \emph{Structure.} The paper is organized as follows. In Section 1, we recall the basic
notions of spectral triples, compact quantum metric spaces, and horofunction
compactifications. In the second one, we explain the construction of
odd and even spectral triples on crossed product C$^{\ast}$-algebras
by Hawkins, Skalski, White, and Zacharias, we introduce the notion of
groups that are separated with respect to length functions, and we prove
the main result of this article. Section 3 is concerned with the
study of length functions on free Abelian groups with respect to which
these groups are separated. We further discuss some implications of
Walsh's results in \cite{Walsh07}. In the last section, we consider
natural examples of C$^{\ast}$-algebras to which the statements of
the earlier sections can be applied. This selection includes generalized
Bunce-Deddens algebras and higher-dimensional non-commutative tori. \\

\vspace{3mm}


\noindent \textbf{Acknowledgements.} I am grateful to Adam Skalski and Piotr Nowak for bringing the questions studied in this article to my attention. They further contributed by providing fruitful discussions and by giving feedback on an earlier draft of this paper. I also wish to thank IMPAN where part of this work was carried out during a research visit.

\vspace{5mm}


\section{Preliminaries}

\vspace{3mm}

\subsection{General notation}

We will write $\mathbb{N}:=\left\{ 0,1,2,...\right\} $ and $\mathbb{N}_{\geq1}:=\left\{ 1,2,...\right\} $
for the natural numbers. The neutral element of a group is always
denoted by $e$, and for a set $S$, we write $\#S$ for the number
of elements in $S$. Scalar products of Hilbert spaces are linear
in the first variable, and we denote the bounded operators on a Hilbert
space $\mathcal{H}$ by $\mathcal{B}(\mathcal{H})$. Further, all
Hilbert spaces and C$^{\ast}$-algebras in this article are assumed
to be separable. We write $\otimes$ for the spatial tensor product
of C$^{\ast}$-algebras as well as for tensor products of Hilbert
spaces. For a discrete group $G$, we denote by $\ell^{2}(G)$ the
Hilbert space of all square summable functions $G\rightarrow\mathbb{C}$
and by $(\delta_{g})_{g\in G}$ the canonical orthonormal basis of
$\ell^2(G)$.

\vspace{3mm}


\subsection{Spectral triples and compact quantum metric spaces}

One of the key concepts in the theory of non-commutative geometry
is that of spectral triples introduced by Connes in \cite{Connes89}. \\

\begin{definition} Let $A$ be a separable unital C$^{\ast}$-algebra. 
\begin{enumerate}
\item An \emph{odd spectral triple} $(\mathcal{A},\mathcal{H},D)$ on $A$
consists of a $\ast$-representation $\pi:A\rightarrow\mathcal{B}(\mathcal{H})$,
a norm dense unital $\ast$-subalgebra $\mathcal{A}$ of $A$ and a densely
defined self-adjoint operator $D$ on $\mathcal{H}$ such that $(1+D^{2})^{-\frac{1}{2}}$
is compact and such that for every $a\in\mathcal{A}$, the domain
of $D$ is invariant under $\pi(a)$ and the commutator $[D,\pi(a)]$
is bounded. 
\item An \emph{even spectral triple} on $A$ consists of a triple $(\mathcal{A},\mathcal{H},D)$
as before and a $\mathbb{Z}_{2}$-grading (i.e., a Hilbert space decomposition
$\mathcal{H}=\mathcal{H}_{1}\oplus\mathcal{H}_{2}$ for which $\pi$
and $D$ decompose via $\pi=\pi_{1}\oplus\pi_{2}$ and 
\[
D=\left(\begin{array}{cc}
0 & D_{1}\\
D_{1}^{\ast} & 0
\end{array}\right)
\]
for suitable $D_1$).
\end{enumerate}
The operator $D$ from above is often called the triple's \emph{Dirac
operator}. \end{definition}

Following Connes \cite{Connes89}, given a spectral triple $(\mathcal{A},\mathcal{H},D)$,
one can define a \emph{Lipschitz semi-norm $L_{D}$ } on $\mathcal{A}$
via 
\[
L_{D}(a):=\left\Vert [D,\pi(a)]\right\Vert ,
\]
meaning that $L_{D}:\mathcal{A}\rightarrow\mathbb{R}_{+}$ is a semi-norm
whose domain is a dense subspace of $A$ that contains $1$ and for
which $L_{D}(1)=0$. (Note that there are various versions
of this concept; here, we follow the conventions in \cite{HSWZ13}.)
By \cite[Proposition 3.7]{Rieffel99} the semi-norm $L_{D}$ is \emph{lower semi-continuous}; that is, for
every $r>0$, the set $\{a\in\mathcal{A}\mid L_{D}(a)\leq r\}$ is
closed in $\mathcal{A}$ with respect to the subspace topology. $L_{D}$
further induces a pseudo-metric $d_{L_{D}}:\mathcal{S}(A)\rightarrow[0,\infty]$
on the state space $\mathcal{S}(A)$ of $A$ via 
\[
d_{L_{D}}(\psi,\psi^{\prime}):=\sup_{a\in \mathcal{A}:L_D(a)\leq1}|\psi(a)-\psi^{\prime}(a)|.
\]
Note that $d_{L_D}$ may take value $+ \infty$.

It is a natural question to ask when the topology on $\mathcal{S}(A)$
coming from $d_{L_{D}}$ coincides with the weak-$\ast$ topology
(see \cite{Rieffel98}, \cite{Rieffel99}). This is the defining property
of a compact quantum metric space. One necessary condition for this to happen is that the triple $(\mathcal{A},\mathcal{H},D)$ is \emph{non-degenerate} in the sense that the representation of $\mathcal{A}$
on $\mathcal{H}$ is faithful and $[D,\pi(a)]=0$ if and only if $a\in\mathbb{C}1$.
If the representation is faithful, we usually suppress it in the notation
and view $\mathcal{A}$ and $A$ as $\ast$-subalgebras of $\mathcal{B}(\mathcal{H})$.

\begin{definition}[{\cite[Definition 5.1]{Rieffel99} and \cite[Definition 2.2]{Rieffel04}}] Let $(\mathcal{A},\mathcal{H},D)$
be a non-degenerate spectral triple and define $L_{D}$ and $d_{L_{D}}$
as before. If the topology on $\mathcal{S}(A)$ induced by the metric
$d_{L_{D}}$ coincides with the weak-$\ast$ topology, $L_{D}$ is
called a \emph{Lip-norm}. In this case, we also say that the pair $(A,L_{D})$
is a \emph{compact quantum metric space} and that $(\mathcal{A},\mathcal{H},D)$
is a \emph{spectral metric space} (or also a \emph{metric spectral triple}). \end{definition}

Rieffel proved the following characterizations.

\begin{theorem}[{\cite[Theorem 1.8]{Rieffel98}}] \label{Characterization}
Let $(\mathcal{A},\mathcal{H},D)$ be a non-degenerate spectral triple
on a C$^{\ast}$-algebra $A$ and define $L_{D}$ and $d_{L_D}$ as before. Then
the following statements are equivalent:
\begin{enumerate}
\item The pair $(A,L_{D})$ defines a compact quantum metric space;
\item The image of $\{a\in\mathcal{A}\mid L_{D}(a)\leq1\}$ is totally bounded
in the quotient space $A/\mathbb{C}1$;
\item $d_{L_D}$ is bounded, and the set $\{a\in\mathcal{A}\mid L_{D}(a)\leq1\text{ and }\left\Vert a\right\Vert \leq1\}$
is totally bounded in $A$.
\end{enumerate}
\end{theorem}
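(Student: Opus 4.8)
The plan is to establish the equivalences $(1)\Leftrightarrow(2)$ and $(2)\Leftrightarrow(3)$ separately, the first being the conceptual heart and resting on an Arzel\`a--Ascoli argument. The starting observation is that the topology induced by $d_{L_D}$ is \emph{always} at least as fine as the weak-$\ast$ topology: for any $a\in\mathcal{A}$ with $L_{D}(a)\leq 1$ the function $\psi\mapsto\psi(a)$ is weak-$\ast$ continuous, and such elements together with $1$ span the norm-dense subspace $\mathcal{A}$; since states have norm one, convergence on this dense set forces weak-$\ast$ convergence, so $d_{L_D}$-convergence implies weak-$\ast$ convergence. Because $\mathcal{S}(A)$ is weak-$\ast$ compact and, by separability of $A$, weak-$\ast$ metrizable, statement (1) is equivalent to the reverse implication, namely that weak-$\ast$ convergence implies $d_{L_D}$-convergence, i.e. that $(\mathcal{S}(A),d_{L_D})$ is a compact metric space inducing the weak-$\ast$ topology.

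For the equivalence with (2) I would embed $\mathcal{A}$ into $C(\mathcal{S}(A))$ through $a\mapsto\hat a$, $\hat a(\psi):=\psi(a)$, under which $a+\mathbb{C}1$ corresponds to $\hat a$ modulo constants; after the standard reduction to self-adjoint elements (legitimate because $L_{D}(a^{\ast})=L_{D}(a)$, so that the self-adjoint and skew-adjoint parts again lie in $\mathcal{D}_1:=\{a\in\mathcal{A}\mid L_{D}(a)\leq 1\}$), the norm $\|a+\mathbb{C}1\|$ is comparable to the oscillation $\sup_{\psi,\psi'}|\psi(a)-\psi'(a)|$. The defining inequality $|\psi(a)-\psi'(a)|\leq d_{L_D}(\psi,\psi')$ for $a\in\mathcal{D}_1$ says precisely that $\{\hat a\mid a\in\mathcal{D}_1\}$ is uniformly $1$-Lipschitz, hence equicontinuous, for $d_{L_D}$. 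Assuming (1), the space $(\mathcal{S}(A),d_{L_D})$ is compact metric and $\sup_{a\in\mathcal{D}_1}\|a+\mathbb{C}1\|$ is controlled by its finite diameter, so Arzel\`a--Ascoli gives total boundedness of $\{\hat a\mid a\in\mathcal{D}_1\}$ in $C(\mathcal{S}(A))$ modulo constants, which transfers to total boundedness of $\mathcal{D}_1$ in $A/\mathbb{C}1$, i.e. (2). Conversely, given (2), a finite $\varepsilon$-net for $\mathcal{D}_1$ in $A/\mathbb{C}1$ reduces the estimation of $d_{L_D}(\psi_n,\psi)=\sup_{a\in\mathcal{D}_1}|\psi_n(a)-\psi(a)|$ along a weak-$\ast$ convergent sequence to finitely many scalar convergences plus a uniform $2\varepsilon$ error, yielding $d_{L_D}(\psi_n,\psi)\to 0$ and hence (1).

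For $(2)\Leftrightarrow(3)$ I would first record, through the same oscillation comparison, that boundedness of $d_{L_D}$ is equivalent to norm-boundedness of $\mathcal{D}_1$ in $A/\mathbb{C}1$; since total boundedness implies boundedness, (2) supplies the boundedness demanded in (3). It then remains to convert total boundedness in the quotient into total boundedness, inside $A$, of the norm-bounded slice $\{a\in\mathcal{D}_1\mid\|a\|\leq 1\}$. Fixing a state $\mu_0$, the bounded linear section $a\mapsto a-\mu_0(a)1$ of the quotient map leaves $L_{D}$ unchanged because $L_{D}$ annihilates $\mathbb{C}1$, identifies $A/\mathbb{C}1$ with a closed subspace of $A$, and carries the image of $\mathcal{D}_1$ onto a norm-bounded subset of $\mathcal{D}_1$; total boundedness is then preserved in both directions, with a rescaling matching the normalization $\|a\|\leq 1$. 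This last step is routine functional-analytic bookkeeping.

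The main obstacle is the direction $(1)\Rightarrow(2)$: it is exactly here that one needs $(\mathcal{S}(A),d_{L_D})$ to be a genuine compact metric space in order to invoke Arzel\`a--Ascoli, and this compactness is available only because (1) forces the $d_{L_D}$-topology to coincide with the compact, metrizable weak-$\ast$ topology. The decisive structural input is that the Lipschitz bound built into $d_{L_D}$ makes $\{\hat a\mid a\in\mathcal{D}_1\}$ equicontinuous for free, so that only uniform boundedness---equivalently finiteness of the diameter---has to be controlled separately. Throughout, the genuinely delicate point is that every total-boundedness statement must be read in the quotient $A/\mathbb{C}1$, since constants are invisible both to $L_{D}$ and to differences of states; keeping track of representatives while simultaneously respecting the Lipschitz and the norm constraints is where the argument requires care rather than new ideas.
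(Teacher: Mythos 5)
The paper offers no proof of this theorem: it is quoted directly from \cite[Theorem 1.8]{Rieffel98} as a known result, so there is no internal argument to compare against. Your proposal is a correct reconstruction of Rieffel's own proof --- the observation that $d_{L_D}$ always refines the weak-$\ast$ topology, the Arzel\`a--Ascoli/equicontinuity argument (after reduction to self-adjoint elements) for $(1)\Leftrightarrow(2)$, and the bounded linear section $a\mapsto a-\mu_{0}(a)1$ of the quotient map for $(2)\Leftrightarrow(3)$ --- so it takes essentially the same route as the cited source.
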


\vspace{3mm}


\subsection{Horofunction compactifications\label{subsec:Horofunction-compactifications}}

In \cite{Rieffel02}, Rieffel demonstrated that
(twisted) group C$^{\ast}$-algebras of Abelian free groups $\mathbb{Z}^{m}$, $m \in \mathbb{N}$
equipped with the natural Dirac operators coming from word length
functions and restrictions of norms on Euclidean spaces, induce compact quantum metric spaces. His proof relies on
the study of Gromov's horofunction compactification (or metric compactification)
of these groups and fixed points under the corresponding group action.
Unfortunately, the approach does not cover other natural examples
such as reduced group C$^{\ast}$-algebras of word hyperbolic groups.
Only later, this class of C$^{\ast}$-algebras (and more generally
a class of certain filtered C$^{\ast}$-algebras) was treated by Ozawa
and Rieffel in \cite{OzawaRieffel} by employing their notion of Haagerup-type
condition. The results in \cite{Rieffel02} were extended to general
nilpotent-by-finite groups by Christ and Rieffel in \cite{ChristRieffel}.\\

Going back to Gromov \cite{Gromov} (see also \cite{Rieffel02}),
the horofunction compactification of a metric space $(Y,d)$ is defined
as follows. Consider the space $C(Y)$ of continuous functions on
$Y$ equipped with the topology of uniform convergence on bounded
sets. For $y_{0}\in Y$, define $C(Y,y_{0}):=\{f\in C(Y)\mid f(y_{0})=0\}$.
Then $C(Y,y_{0})$ is homeomorphic to $C_{\ast}(Y):=C(Y)/\mathbb{C}1$
equipped with the quotient topology, so in particular, $C(Y,y_{0})$
is independent of $y_{0}\in Y$. One can define a continuous embedding of
the space $Y$ into $C(Y,y_{0})$ via $y\mapsto f_{y}(\, \cdot \,):=d(y, \, \cdot \,)-d(y,y_{0})$.
The corresponding closure of $Y$ in $C(Y,y_{0})$ is denoted by $\widehat{Y}$.
If $(Y,d)$ is \emph{proper} in the sense that every closed ball in
$Y$ is compact, $\widehat{Y}$ is a compact Hausdorff space which
is called the \emph{horofunction compactification} of $Y$. The action
of the isometry group of $Y$ extends to a continuous action on $\widehat{Y}$
by homeomorphism. The space $\partial Y:=\widehat{Y}\setminus Y$
equipped with the subspace topology is called the \emph{horofunction
boundary} of $Y$.\\

In \cite[Section 4]{Rieffel02}, it was shown that if $(Y,d)$ is a
complete locally compact metric space, $C(\widehat{Y})$ can be described
as the (commutative) unital C$^{\ast}$-subalgebra $\mathcal{G}(Y,d)$
of $C_{b}(Y)$ generated by $C_{0}(Y)$ and the functions $Y\rightarrow\mathbb{C},y\mapsto f_{y}(x)$
where $x\in Y$ (i.e., $\widehat{Y}$ is homeomorphic to the character
spectrum of $\mathcal{G}(Y,d)$). \\

An important notion in Rieffel's work is that of weakly geodesic rays.

\begin{definition}[{\cite[Definition 4.3]{Rieffel02}}] Let $(Y,d)$
be a complete locally compact metric space and let $T\subseteq\mathbb{R}_{+}$
be an unbounded subset that contains $0$. Consider a function $\gamma:T\rightarrow Y$. 
\begin{itemize}
\item $\gamma$ is called a \emph{geodesic ray} if $d(\gamma(s),\gamma(t))=\left|s-t\right|$
for all $s,t\in T$; 
\item $\gamma$ is called an \emph{almost geodesic ray} if for every $\varepsilon>0$,
there exists an integer $N$ such that for all $t\geq s\geq N$, 
\[
\left|d(\gamma(t),\gamma(s))+d(\gamma(s),\gamma(0))-t\right|<\varepsilon;
\]
\item $\gamma$ is called a \emph{weakly geodesic ray} if for every $y\in Y$
and $\varepsilon>0$, there exists an integer $N$ such that if $s,t\geq N$,
then 
\[
\left|d(\gamma(t),\gamma(0))-t\right|<\varepsilon\hspace*{1em}\text{and}\hspace*{1em}\left|d(\gamma(t),y)-d(\gamma(s),y)-(t-s)\right|<\varepsilon.
\]
\end{itemize}
\end{definition}

It can be shown that every almost geodesic ray is weakly geodesic.
Further, the following theorem holds.

\begin{theorem}[{\cite[Theorem 4.7]{Rieffel02}}] \label{Busemann}
Let $(Y,d)$ be a complete locally compact metric space and let $\gamma:T\rightarrow Y\subseteq\widehat{Y}$
be a weakly geodesic ray. Then for every $f\in\mathcal{G}(Y,d)$, the
limit $\lim_{t\rightarrow\infty}f(\gamma(t))$ exists and gives a
(unique) element in $\partial Y$ in the sense that 
\[
\chi_{\gamma}:\mathcal{G}(Y,d)\rightarrow\mathbb{C},\chi_{\gamma}(f):=\lim_{t\rightarrow\infty}f(\gamma(t))
\]
defines a character on $\mathcal{G}(Y,d)$ whose restriction to $C_{0}(Y)$
vanishes. If $Y$ is proper and if the topology of $(Y,d)$ has a
countable base, then every point in $\partial Y$ is determined as
above by a weakly geodesic ray. \end{theorem}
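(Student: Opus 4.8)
The plan is to treat the two assertions separately. Recall from the discussion preceding the theorem that $\mathcal{G}(Y,d)$ is the C$^\ast$-algebra generated by $C_0(Y)$ together with the functions $g_x\colon y\mapsto f_y(x)=d(y,x)-d(y,y_0)$, $x\in Y$, that $\widehat{Y}$ is the character spectrum of $\mathcal{G}(Y,d)$, and that each point evaluation $\mathrm{ev}_{\gamma(t)}\colon f\mapsto f(\gamma(t))$ is a character of $\mathcal{G}(Y,d)$. For the first assertion I would observe that the set of $f\in\mathcal{G}(Y,d)$ for which $\lim_{t\to\infty}f(\gamma(t))$ exists is a norm-closed unital $\ast$-subalgebra (closure under the algebraic operations is immediate, and norm-closedness follows from a three-$\varepsilon$ estimate using $|f(\gamma(t))-f_n(\gamma(t))|\le\|f-f_n\|$). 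Hence it suffices to check convergence on the generators. For $g_x$ I would set $\beta(y):=\lim_{t\to\infty}(d(\gamma(t),y)-t)$: the second clause of the weakly geodesic condition says precisely that $t\mapsto d(\gamma(t),y)-t$ is asymptotically Cauchy, while the first clause together with the triangle inequality shows $|d(\gamma(t),y)-t|\le d(y,y_0)+\varepsilon$, so by completeness of $\mathbb{R}$ the limit $\beta(y)$ exists, with $\beta(y_0)=0$. Then $g_x(\gamma(t))=(d(\gamma(t),x)-t)-(d(\gamma(t),y_0)-t)\to\beta(x)$, and the limit exists on all of $\mathcal{G}(Y,d)$.

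Next I would argue that the resulting functional $\chi_\gamma$ is a character lying in $\partial Y$. Being a pointwise limit of the characters $\mathrm{ev}_{\gamma(t)}$ with $\chi_\gamma(1)=1$, it is multiplicative and nonzero, hence a character, giving the (unique) point of $\widehat{Y}$. The first clause of the weakly geodesic condition forces $d(\gamma(t),y_0)\to\infty$ as $t\to\infty$ in the unbounded set $T$; since every compact subset of $Y$ has finite diameter, $\gamma(t)$ eventually leaves each compact set, so $f(\gamma(t))\to0$ for all $f\in C_0(Y)$. Thus $\chi_\gamma$ annihilates the closed ideal $C_0(Y)$, and since the characters of $\mathcal{G}(Y,d)$ restricting nontrivially to $C_0(Y)$ are exactly the evaluations at points of $Y$, we get $\chi_\gamma\in\widehat{Y}\setminus Y=\partial Y$.

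For the converse, assume $Y$ is proper and second countable and fix $\chi\in\partial Y$. The key preliminary point is metrizability: as $Y=\bigcup_n\overline{B}(y_0,n)$ is $\sigma$-compact, the topology of uniform convergence on bounded sets on $C(Y,y_0)$ is induced by the countable separating family of seminorms $f\mapsto\sup_{\overline{B}(y_0,n)}|f|$ and is therefore metrizable, so the compact space $\widehat{Y}$ is metrizable and $\chi$ is the limit of a \emph{sequence} $(p_n)$ in $Y$, i.e. $f_{p_n}(x)\to\chi(g_x)=:h(x)$ for every $x$. A short compactness argument gives $d(y_0,p_n)\to\infty$, for otherwise a subsequence would lie in a fixed closed ball, which is compact, and a convergent subsubsequence would exhibit $\chi$ as an evaluation at a point of $Y$. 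I would then reparametrize by radius: passing to a subsequence so that $t_n:=d(y_0,p_n)$ is strictly increasing to $\infty$, I define $\gamma$ on $T:=\{0\}\cup\{t_n:n\ge1\}$ by $\gamma(0):=y_0$ and $\gamma(t_n):=p_n$. The first weakly geodesic clause then holds with equality, while for the second one computes $d(\gamma(t_k),y)-d(\gamma(t_j),y)-(t_k-t_j)=f_{p_k}(y)-f_{p_j}(y)$, whose smallness for large $j,k$ is exactly the Cauchy property of $f_{p_n}(y)\to h(y)$. Finally $\chi_\gamma(g_x)=\lim_n f_{p_n}(x)=h(x)=\chi(g_x)$ on generators, so $\chi_\gamma=\chi$.

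I expect the forward direction to be essentially bookkeeping once one reduces to the generators via the closed-subalgebra trick. The subtle point lies in the converse: the natural instinct is to build an \emph{almost} geodesic ray through interpolating points, which is impossible in a proper but non-geodesic space (there may be no points at intermediate radii at all). The resolution, and the reason Rieffel's \emph{weakly} geodesic notion is the appropriate one here, is that $\gamma$ is only required on a sparse parameter set $T$ and that the weakly geodesic condition constrains only the chosen points; the defining sequence of $\chi$, merely reparametrized by its distance to the basepoint, already satisfies it. The only genuine input beyond this observation is the metrizability of $\widehat{Y}$, which upgrades the a priori net defining $\chi$ to an honest sequence.
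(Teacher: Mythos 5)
This theorem is quoted in the paper as \cite[Theorem 4.7]{Rieffel02} without proof, so there is no in-paper argument to compare against; measured against Rieffel's original proof, your argument is correct and follows essentially the same route (reduction to the generators $C_0(Y)$ and $g_x$ via a closed unital $\ast$-subalgebra, and for the converse the metrizability of $\widehat{Y}$ followed by reparametrizing the approximating sequence by its distance to the basepoint). The only nitpick is in the forward direction: the definition of a weakly geodesic ray does not force $\gamma(0)=y_0$, so your bound should read $|d(\gamma(t),y)-t|\le d(y,\gamma(0))+\varepsilon$ and $\beta(y_0)$ need not vanish; this is harmless, since the Cauchy property alone gives existence of $\beta$, and $g_x(\gamma(t))\to\beta(x)-\beta(y_0)$ still exists.
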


\begin{definition}[{\cite[Definition 4.8]{Rieffel02}}] Let $(Y,d)$
be a complete locally compact metric space. A point in $\partial Y$
induced by a weakly geodesic ray $\gamma$ as in Theorem \ref{Busemann}
is called a \emph{Busemann point}. \end{definition}

In this article, we will mostly be concerned with the following setup
that occurs in \cite{Connes89}. Let $G$ be a discrete group equipped
with a \emph{length function} $\ell:G\rightarrow\mathbb{R}_{+}$; that is, $\ell(gh)\leq\ell(g)+\ell(h)$ and $\ell(g^{-1})=\ell(g)$
for all $g,h\in G$, and $\ell(g)=0$ exactly if $g=e$. Note that
every such length function induces a natural metric $d_{\ell}$ on
$G$ via $d_{\ell}(g,h):=\ell(g^{-1}h)$. The space $(G,d_{\ell})$
is proper if $\ell$ is \emph{proper} in the sense that the set $\{g\in G\mid\ell(g)\leq r\}$
is finite for all $r>0$. We will write $\overline{G}^{\ell}$ for
the horofunction compactification of $(G,d_{\ell})$ and $\partial_{\ell}G$
for the corresponding boundary. The canonical action of $G$ on itself
via left multiplication extends to a continuous action $G\curvearrowright\overline{G}^{\ell}$
which again restricts to an action $G\curvearrowright\partial_{\ell}G$ on the boundary.

Prototypes of length functions on finitely generated groups are \emph{word
length functions}: for every discrete group $G$ finitely generated
by a set $S$ with $S=S^{-1}$, the expression $\ell_{S}(g):=\min\{n\mid g=s_{1}...s_{n}\text{ where }s_{1},...,s_{n}\in S\}$,
$g\in G$ defines a length function on $G$.

\vspace{3mm}


\section{Spectral triples on crossed product C$^{\ast}$-algebras}

\vspace{3mm}


\subsection{Crossed product C$^{\ast}$-algebras\label{subsec:Crossed-product-C-algebras}}

Let $\alpha:G\rightarrow\text{Aut}(A)$ be an action of a discrete
group $G$ on a separable unital C$^{\ast}$-algebra $A$ and let
$\ell:G\rightarrow\mathbb{R}_{+}$ be a proper length function on
$G$. We will often write $g.a:=\alpha_{g}(a)$, where $g\in G$, $a\in A$.
Assume that $(\mathcal{A},\mathcal{H}_{A},D_{A})$ is an odd spectral
triple on $A$ via a faithful representation $\pi$ of $A$ and consider
the canonical odd spectral triple $(\mathbb{C}[G],\ell^{2}(G),M_{\ell})$
on $C_{r}^{\ast}(G)$. Here, $M_{\ell}$ denotes the multiplication
operator given by $M_{\ell}\delta_{g}:=\ell(g)\delta_{g}$ for $g\in G$,
and $\mathbb{C}[G]\subseteq C_{r}^{\ast}(G)$ is the span of all left
regular representation operators.

Recall that the reduced crossed product C$^{\ast}$-algebra $A\rtimes_{\alpha,r}G$
is defined as the C$^{\ast}$-subalgebra of $\mathcal{B}(\mathcal{H}_{A}\otimes\ell^{2}(G))$
generated by the operators $\widetilde{\pi}(a)$, $a\in A$ and $\lambda_{g}$,
$g\in G$ with 
\[
\widetilde{\pi}(a)(\xi\otimes\delta_{h}):=\pi(h^{-1}.a)\xi\otimes\delta_{h}\qquad\text{and}\qquad\lambda_{g}(\xi\otimes\delta_{h}):=\xi\otimes\delta_{gh}
\]
for $\xi\in\mathcal{H}_{A}$, $h\in G$. This definition does (up to isomorphism)
not depend on the choice of the faithful representation $\pi$. The
C$^{\ast}$-algebra $A$ naturally embeds into $A\rtimes_{\alpha,r}G$
via $a\mapsto\widetilde{\pi}(a)$. We will therefore often view $A$
as a C$^{\ast}$-subalgebra of $A\rtimes_{\alpha,r}G$ and suppress
$\pi$ and $\widetilde{\pi}$ in the notation. Further, we can canonically
view the reduced group C$^\ast$-algebra $C_{r}^{\ast}(G)$ as a C$^{\ast}$-subalgebra of $A\rtimes_{\alpha,r}G$.

\begin{lemma} \label{Independence} The C$^{\ast}$-subalgebra of
$\mathcal{B}(\mathcal{H}_{A}\otimes\ell^{2}(G))$ generated by $A\rtimes_{\alpha,r}G$ and
$\mathbb{C}1\otimes\ell^{\infty}(G)$
does (up to isomorphism) not depend on the choice of the faithful
representation $\pi:A\hookrightarrow\mathcal{B}(\mathcal{H}_{A})$.
\end{lemma}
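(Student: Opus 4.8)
The plan is to exhibit the algebra in question as a reduced crossed product whose coefficient algebra and action are manufactured solely from $A$ and $\alpha$, and then to invoke the representation-independence of reduced crossed products exactly as is done for $A\rtimes_{\alpha,r}G$ itself.

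First I would introduce an auxiliary coefficient algebra. Let $\mathcal{D}$ be the C$^\ast$-subalgebra of the $\ell^\infty$-product $\ell^\infty(G,A)=\prod_{g\in G}A$ generated by the two families $\iota(a):=(\alpha_{h^{-1}}(a))_{h\in G}$ for $a\in A$ and $j(f):=(f(h)1_A)_{h\in G}$ for $f\in\ell^\infty(G)$. Both $\iota\colon A\to\ell^\infty(G,A)$ and $j\colon\ell^\infty(G)\to\ell^\infty(G,A)$ are injective $\ast$-homomorphisms with commuting ranges, and $\mathcal{D}$ manifestly depends only on $A$ and $\alpha$, not on $\pi$. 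Left translation in the $G$-variable, $(\beta_g F)(h):=F(g^{-1}h)$, defines a $G$-action $\beta$ on $\ell^\infty(G,A)$ which preserves the generators (one checks $\beta_g\iota(a)=\iota(\alpha_g(a))$ and $\beta_g j(f)=j(g.f)$, where $(g.f)(h):=f(g^{-1}h)$) and hence restricts to an action on $\mathcal{D}$; again $\beta$ does not involve $\pi$. Comparing with the defining formulas, the $\ast$-representation $\tilde\rho\colon\mathcal{D}\to\mathcal{B}(\mathcal{H}_A\otimes\ell^2(G))$ given by $\tilde\rho(F)(\xi\otimes\delta_h):=\pi(F(h))\xi\otimes\delta_h$ is faithful and isometric (as $\pi$ is), satisfies $\tilde\rho(\iota(a))=\widetilde\pi(a)$ and $\tilde\rho(j(f))=1\otimes M_f$, and therefore has image exactly the C$^\ast$-algebra $B:=C^\ast(\widetilde\pi(A),\,1\otimes\ell^\infty(G))$ of diagonal operators.

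Next I would record the covariance relations $\lambda_g\widetilde\pi(a)\lambda_g^\ast=\widetilde\pi(\alpha_g(a))$ and $\lambda_g(1\otimes M_f)\lambda_g^\ast=1\otimes M_{g.f}$, which together read $\lambda_g\tilde\rho(F)\lambda_g^\ast=\tilde\rho(\beta_g F)$. Thus $(\tilde\rho,\lambda)$ is a covariant representation of $(\mathcal{D},G,\beta)$, and the C$^\ast$-algebra generated by $A\rtimes_{\alpha,r}G$ and $1\otimes\ell^\infty(G)$, which equals $C^\ast(B,\{\lambda_g\}_{g\in G})=\overline{\operatorname{span}}\{\tilde\rho(F)\lambda_g\}$, is precisely the image of the integrated form of $(\tilde\rho,\lambda)$. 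A direct comparison of formulas shows this integrated form is the regular representation induced from the representation $\rho:=\pi\circ\mathrm{ev}_e\colon\mathcal{D}\to\mathcal{B}(\mathcal{H}_A)$, $\rho(F)=\pi(F(e))$; equivalently $\tilde\rho=(\mathrm{Ind}\,\rho)|_{\mathcal{D}}$. In particular the integrated form factors through the reduced crossed product $\mathcal{D}\rtimes_{\beta,r}G$.

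The main obstacle is the last step: showing this factorization is an isomorphism, i.e. that the induced representation is faithful on $\mathcal{D}\rtimes_{\beta,r}G$ even though $\rho$ is \emph{not} faithful on $\mathcal{D}$. Here I would invoke the criterion that a representation of $\mathcal{D}$ whose kernel contains no nonzero $\beta$-invariant ideal induces a faithful representation of the reduced crossed product. For $\rho=\pi\circ\mathrm{ev}_e$ one has $\ker\rho=\{F\in\mathcal{D}:F(e)=0\}$, and the largest $\beta$-invariant ideal contained in it is $\bigcap_{g\in G}\beta_g(\ker\rho)=\{F\in\mathcal{D}:F(h)=0\text{ for all }h\in G\}=\{0\}$; hence the induced representation is faithful and $C^\ast(A\rtimes_{\alpha,r}G,\,1\otimes\ell^\infty(G))\cong\mathcal{D}\rtimes_{\beta,r}G$. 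Since the triple $(\mathcal{D},G,\beta)$ was constructed without any reference to the faithful representation $\pi$, the reduced crossed product on the right-hand side is independent of $\pi$ up to isomorphism, which is the assertion. I would note that this last invariance is itself the analogue, for the action $\beta$, of the representation-independence of $A\rtimes_{\alpha,r}G$ recalled above.
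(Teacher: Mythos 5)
Your argument is correct, but it takes a genuinely different route from the paper's. The paper proves the lemma by a direct norm comparison in the spirit of \cite[Proposition 4.1.5]{BrownOzawa}: for a finite sum $\sum_{g}\widetilde{\pi}(a_{g})(1\otimes f_{g})\lambda_{g}$ it computes the norm as a supremum of compressions $(1\otimes P_{F})\,\cdot\,(1\otimes P_{F})$ over finite subsets $F\subseteq G$, observes that each compression lands in $\widetilde{\pi}(A)\otimes M_{\#F}(\mathbb{C})$, and uses that the norm there is independent of the faithful representation of $A$. You instead identify the algebra abstractly: you build the $G$-C$^{\ast}$-algebra $\mathcal{D}\subseteq\ell^{\infty}(G,A)$ from $(A,\alpha)$ alone, recognize $C^{\ast}(A\rtimes_{\alpha,r}G,\,1\otimes\ell^{\infty}(G))$ as the image of the regular representation of $\mathcal{D}\rtimes_{\beta,r}G$ induced from $\rho=\pi\circ\mathrm{ev}_{e}$, and conclude via the criterion that an induced regular representation is faithful on the reduced crossed product whenever $\ker\rho$ contains no nonzero invariant ideal. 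All the individual verifications you make (the covariance relations, $\tilde\rho(\iota(a))=\widetilde{\pi}(a)$, $\tilde\rho(j(f))=1\otimes M_{f}$, $\bigcap_{g}\beta_{g}(\ker\rho)=\{0\}$) check out. The one input you should justify or cite precisely is the faithfulness criterion itself: it is standard but strictly stronger than the representation-independence of reduced crossed products for \emph{faithful} coefficient representations, and its proof runs through the canonical faithful conditional expectation $E:\mathcal{D}\rtimes_{\beta,r}G\rightarrow\mathcal{D}$ (one checks $\langle\mathrm{Ind}\,\rho(x)(\xi\otimes\delta_{h}),\xi\otimes\delta_{h}\rangle=\langle\rho(\beta_{h^{-1}}(E(x)))\xi,\xi\rangle$ for $x\geq0$, so $\mathrm{Ind}\,\rho(x)=0$ forces $E(x)\in\bigcap_{h}\beta_{h}(\ker\rho)=\{0\}$, whence $x=0$; boundedness of $\mathrm{Ind}\,\rho$ in the reduced norm follows by comparing with $\mathrm{Ind}(\rho\oplus\sigma)$ for $\sigma$ faithful). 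What your approach buys is a concrete representation-free model of the algebra as a reduced crossed product $\mathcal{D}\rtimes_{\beta,r}G$ — a statement in the spirit of the identification $C_{u}^{\ast}(G)\cong\ell^{\infty}(G)\rtimes_{r}G$ that the paper uses later in Proposition \ref{Isomorphism-1} — at the cost of a heavier external lemma; the paper's matrix-compression argument is more elementary and entirely self-contained.
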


\begin{proof} The argument is standard; compare, for instance, with
the proof of \cite[Proposition 4.1.5]{BrownOzawa}. Let $\pi:A\hookrightarrow\mathcal{B}(\mathcal{H}_{A})$
and $\pi^{\prime}:A\hookrightarrow\mathcal{B}(\mathcal{H}_{A}^{\prime})$
be two faithful representations of $A$, define $\widetilde{\pi}$
and $\widetilde{\pi}^{\prime}$ as above, and consider 
\begin{eqnarray*}
B_{1} & := & C^{\ast}(\widetilde{\pi}(A)\cup\{\lambda_{g}\mid g\in G\}\cup\mathbb{C}1\otimes\ell^{\infty}(G))\subseteq\mathcal{B}(\mathcal{H}_{A}\otimes\ell^{2}(G)),\\
B_{2} & := & C^{\ast}(\widetilde{\pi}^{\prime}(A)\cup\{\lambda_{g}\mid g\in G\}\cup\mathbb{C}1\otimes\ell^{\infty}(G))\subseteq\mathcal{B}(\mathcal{H}_{A}^{\prime}\otimes\ell^{2}(G)).
\end{eqnarray*}
We have to show that $B_{1}\cong B_{2}$ via $\widetilde{\pi}(a)\mapsto\widetilde{\pi}^{\prime}(a)$,
$\lambda_{g}\mapsto\lambda_{g}$ and $1\otimes f\mapsto1\otimes f$
for $a\in A$, $g\in G$, $f\in\ell^{\infty}(G)$. For every finite
subset $F\subseteq G$, define $P_{F}\in\ell^{\infty}(G)$ to be the
orthogonal projection onto the closure of $\text{Span}\{\delta_{g}\mid g\in F\}\subseteq\ell^{2}(G)$.
It is then easy to see that for all finite sequences $(a_{g})_{g\in G}\subseteq A$,
$(f_{g})_{g\in G}\subseteq\ell^{\infty}(G)$ with $a_{g}=0$ and $f_{g}=0$
for almost all $g\in G$, 
\[
\Vert\sum_{g\in G}\widetilde{\pi}(a_{g})(1\otimes f_{g})\lambda_{g}\Vert=\sup_{F\subseteq G\text{ finite}}\Vert(1\otimes P_{F})(\sum_{g\in G}\widetilde{\pi}(a_{g})(1\otimes f_{g})\lambda_{g})(1\otimes P_{F})\Vert
\]
and 
\[
\Vert\sum_{g\in G}\widetilde{\pi}^{\prime}(a_{g})(1\otimes f_{g})\lambda_{g}\Vert=\sup_{F\subseteq G\text{ finite}}\Vert(1\otimes P_{F})(\sum_{g\in G}\widetilde{\pi}^{\prime}(a_{g})(1\otimes f_{g})\lambda_{g})(1\otimes P_{F})\Vert.
\]
Now, for every finite subset $F\subseteq G$ and $a\in A$, 
\[
(1\otimes P_{F})\widetilde{\pi}(a)=(1\otimes P_{F})\widetilde{\pi}(a)(1\otimes P_{F})=\sum_{h\in F}\widetilde{\pi}(h^{-1}.a)\otimes e_{h,h},
\]
where $e_{g,h}$, $g,h\in F$ denote the canonical matrix units of
$P_{F}\mathcal{B}(\ell^{2}(G))P_{F}\cong M_{\#F}(\mathbb{C})$. This
implies that 
\begin{eqnarray*}
(1\otimes P_{F})(\sum_{g\in G}\widetilde{\pi}(a_{g})(1\otimes f_{g})\lambda_{g})(1\otimes P_{F}) & = & \sum_{g\in G}\sum_{h\in F\cap gF}\widetilde{\pi}(h^{-1}.a_{g})\otimes(f_{g}e_{h,g^{-1}h})\\
 & \in & \widetilde{\pi}(A)\otimes M_{\#F}(\mathbb{C})
\end{eqnarray*}
and similarly for $\widetilde{\pi}^{\prime}$. But $\widetilde{\pi}(A)\otimes M_{\#F}(\mathbb{C})\cong\widetilde{\pi}^{\prime}(A)\otimes M_{\#F}(\mathbb{C})$
canonically, and hence, the norms above coincide. \end{proof}

As in \cite[Section 2]{HSWZ13}, define a Dirac operator $D$ on $\mathcal{H}\oplus\mathcal{H}$
with $\mathcal{H}:=\mathcal{H}_{A}\otimes\ell^{2}(G)$ via 
\[
D:=\left(\begin{array}{cc}
0 & D_{A}\otimes1-i\otimes M_{\ell}\\
D_{A}\otimes1+i\otimes M_{\ell} & 0
\end{array}\right).
\]
and write 
\[
C_{c}(G,\mathcal{A}):=\left\{ \sum_{g\in G}a_{g}\lambda_{g}\mid(a_{g})_{g\in G}\subseteq\mathcal{A}\text{ with }a_{g}=0\text{ for almost all }g\in G\right\} .
\]
Then $C_{c}(G,\mathcal{A})$ is a dense $\ast$-subalgebra of $A\rtimes_{\alpha,r}G\subseteq\mathcal{B}(\mathcal{H})$.
For $x=\sum_{g\in G}a_{g}\lambda_{g}\in C_{c}(G,\mathcal{A})$ with
$(a_{g})_{g\in G}\subseteq\mathcal{A}$, we call $\text{supp}(x):=\{g\in G\mid a_{g}\neq0\}$
the \emph{support of $x$}. It was argued in \cite[Theorem 2.7]{HSWZ13}
that, under the assumption that $\mathcal{A}$ is invariant under
the action of $G$ and that $\sup_{g\in G}\left\Vert [D_{A},g.a]\right\Vert <\infty$
for every $a\in\mathcal{A}$, the triple $(C_{c}(G,\mathcal{A}),\mathcal{H}\oplus\mathcal{H},D)$
defines an even spectral triple on $A\rtimes_{\alpha,r}G$. Further,
if $(\mathcal{A},\mathcal{H}_{A},D_{A})$ is non-degenerate, so is
$(C_{c}(G,\mathcal{A}),\mathcal{H}\oplus\mathcal{H},D)$. (Note that
in \cite{HSWZ13}, the slightly different setup of proper translation
bounded integer-valued functions on $G$ is considered; however, the
results translate into our setting verbatim.)
Motivated by this, let us introduce the notion of metrically equicontinuous
actions.

\begin{definition}[{\cite[Definition 2.5]{HSWZ13}}] Let $(\mathcal{A},\mathcal{H}_{A},D_{A})$
be a non-degenerate odd spectral triple on a unital separable C$^{\ast}$-algebra $A$.
Assume that $L_{D_{A}}:\mathcal{A}\rightarrow[0,\infty)$, $L_{D_{A}}(a)=\left\Vert [D,a]\right\Vert $
is a Lipschitz seminorm such that the pair $(A,L_{D_{A}})$ is a compact
quantum metric space. An action $\alpha:G\rightarrow\text{Aut}(A)$
is called \emph{smooth} if $\alpha_{g}(\mathcal{A})\subseteq\mathcal{A}$
for every $g\in G$. If further $\sup_{g\in G}L_{D_{A}}(g.a)<\infty$
for every $a\in\mathcal{A}$, $\alpha$ is called \emph{metrically
equicontinuous}. \end{definition}

Recall that the horofunction compactification $\overline{G}^{\ell}$
of a discrete group $G$ equipped with a proper length function $\ell:G\rightarrow\mathbb{R}_{+}$
is the (compact) closure of the image of $G$ in $C(G,e)$ under the
embedding $g\mapsto f_{g}(\,\cdot\,):=d_{\ell}(g,\,\cdot\,)-d_{\ell}(g,e)$
and that the canonical action of $G$ on itself induces actions $\beta:G\curvearrowright C(\overline{G}^{\ell})$
and $G\curvearrowright C(\partial_{\ell}G)$. By the very construction,
for every $g\in G$, there exists a unique continuous bounded map $\varphi_{g}^{\ell}:\overline{G}^{\ell}\rightarrow\mathbb{C}$
defined by $\varphi_{g}^{\ell}(h):=\ell(h)-\ell(g^{-1}h)$ for $h\in G$.
These maps very naturally occur in our crossed product setting, as
the following lemma illustrates.

\begin{lemma} \label{Commutator identity} Let $\alpha:G\rightarrow\text{Aut}(A)$
be an action of a discrete group $G$ on a separable unital C$^{\ast}$-algebra
$A\subseteq\mathcal{B}(\mathcal{H}_{A})$ and let $\ell:G\rightarrow\mathbb{R}_{+}$
be a proper length function on $G$. For every $x=\sum_{g\in G}a_{g}\lambda_{g}\in C_{c}(G,A)\subseteq\mathcal{B}(\mathcal{H})$
with $(a_{g})_{g\in G}\subseteq A$, $\mathcal{H}:=\mathcal{H}_{A}\otimes\ell^{2}(G)$,
\[
[1\otimes M_{\ell},x]=\sum_{g\in G}(1\otimes\varphi_{g}^{\ell})a_{g}\lambda_{g}
\]
where $\varphi_{g}^{\ell}$ is viewed as a multiplication operator
$\delta_{h}\mapsto\varphi_{g}^{\ell}(h)$ in $\ell^{\infty}(G)\subseteq\mathcal{B}(\ell^{2}(G))$.
\end{lemma}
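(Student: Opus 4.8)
The plan is to verify the identity by a direct computation on the total set of vectors $\{\xi\otimes\delta_h \mid \xi\in\mathcal{H}_A,\ h\in G\}$ of $\mathcal{H}=\mathcal{H}_A\otimes\ell^2(G)$, after reducing to a single monomial by linearity. Both sides of the claimed equation are linear in $x$, and $x$ has finite support, so it suffices to treat the case $x=\widetilde{\pi}(a)\lambda_g$ for a fixed $g\in G$ and $a\in A$; the general statement then follows by summing over $\mathrm{supp}(x)$.

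First I would unwind the definitions of $\widetilde{\pi}$, $\lambda_g$ and $M_\ell$ recalled in Subsection~\ref{subsec:Crossed-product-C-algebras} to evaluate the two halves of the commutator on a basis vector $\xi\otimes\delta_h$. Keeping track of the index shift produced by $\lambda_g$, one obtains
\[
(1\otimes M_\ell)\,\widetilde{\pi}(a)\lambda_g(\xi\otimes\delta_h)=\ell(gh)\,\pi((gh)^{-1}.a)\xi\otimes\delta_{gh},
\]
while
\[
\widetilde{\pi}(a)\lambda_g\,(1\otimes M_\ell)(\xi\otimes\delta_h)=\ell(h)\,\pi((gh)^{-1}.a)\xi\otimes\delta_{gh}.
\]
Subtracting gives $[1\otimes M_\ell,\widetilde{\pi}(a)\lambda_g](\xi\otimes\delta_h)=(\ell(gh)-\ell(h))\,\pi((gh)^{-1}.a)\xi\otimes\delta_{gh}$.

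It then remains to identify the scalar $\ell(gh)-\ell(h)$ with a value of $\varphi_g^\ell$. By the very definition $\varphi_g^\ell(k)=\ell(k)-\ell(g^{-1}k)$, so $\varphi_g^\ell(gh)=\ell(gh)-\ell(h)$; and evaluating the right-hand side $(1\otimes\varphi_g^\ell)\widetilde{\pi}(a)\lambda_g$ on $\xi\otimes\delta_h$ produces exactly $\varphi_g^\ell(gh)\,\pi((gh)^{-1}.a)\xi\otimes\delta_{gh}$, because the multiplication operator $1\otimes\varphi_g^\ell$ reads off the coefficient at the shifted index $gh$. This coincides with the commutator computed above on every basis vector, so the two operators agree on the dense span and hence everywhere.

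The computation is essentially routine; the only points requiring care are (i) that $1\otimes\varphi_g^\ell$ must sit to the \emph{left} of $\lambda_g$, since $\lambda_g$ does not commute with this multiplication operator, and it is precisely the shifted value $\varphi_g^\ell(gh)$, not $\varphi_g^\ell(h)$, that appears; and (ii) that $M_\ell$ is unbounded, so the identity is first established on the domain spanned by the vectors $\xi\otimes\delta_h$ (each of which lies in $\mathrm{dom}(1\otimes M_\ell)$, $\delta_h$ being an eigenvector of $M_\ell$), after which the finite support of $x$ ensures that the commutator is bounded and the equality passes to the closure. I do not anticipate any genuine obstacle.
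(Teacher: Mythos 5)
Your computation is correct and follows essentially the same route as the paper's proof: evaluate both halves of the commutator on the basis vectors $\xi\otimes\delta_h$, obtain the scalar $\ell(gh)-\ell(h)$, and recognize it as $\varphi_g^{\ell}(gh)$, which is exactly what the multiplication operator $1\otimes\varphi_g^{\ell}$ placed to the left of $a_g\lambda_g$ produces. The extra remarks about linearity, operator ordering, and the domain of $M_\ell$ are sound but do not change the substance of the argument.
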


\begin{proof} One has that for every finite sum $x=\sum_{g\in G}a_{g}\lambda_{g}\in C_{c}(G,\mathcal{A})$
with $(a_{g})_{g\in G}\subseteq\mathcal{A}$ and $\xi\in\mathcal{H}$,
$h\in G$, 
\begin{eqnarray*}
 &  & [1\otimes M_{\ell},x](\xi\otimes\delta_{h})=(1\otimes M_{\ell})x(\xi\otimes\delta_{h})-x(1\otimes M_{\ell})(\xi\otimes\delta_{h})\\
 & = & \sum_{g\in G}\left(\ell(gh)-\ell(h)\right)(\alpha_{(gh)^{-1}}(a_{g})\xi\otimes\delta_{gh})=\sum_{g\in G}\varphi_{g}^{\ell}(gh)(\alpha_{(gh)^{-1}}(a_{g})\xi\otimes\delta_{gh})
\end{eqnarray*}
and hence, 
\[
[1\otimes M_{\ell},x]=\sum_{g\in G}(1\otimes\varphi_{g}^{\ell})a_{g}\lambda_{g}\in\mathcal{B}(\mathcal{H}),
\]
which implies the claim. \end{proof}

The maps $\varphi_{g}^{\ell}$, $g\in G$ further satisfy the following
1-cocycle condition which will become important in the later sections.

\begin{lemma} \label{1-cocycle} Let $G$ be a discrete group and
let $\ell:G\rightarrow\mathbb{R}_{+}$ be a proper length function
on $G$. Then $\varphi_{gh}^{\ell}=g.\varphi_{h}^{\ell}+\varphi_{g}^{\ell}$
for all $g,h\in G$. \end{lemma}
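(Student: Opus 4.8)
The plan is to verify the identity on the dense subset $G \subseteq \overline{G}^{\ell}$ and then extend it to all of $\overline{G}^{\ell}$ by continuity. Since each $\varphi_{g}^{\ell}$ is by construction a continuous function on $\overline{G}^{\ell}$ and the action $\beta$ acts by $\ast$-automorphisms of $C(\overline{G}^{\ell})$, the functions $\varphi_{gh}^{\ell}$ and $g.\varphi_{h}^{\ell} + \varphi_{g}^{\ell}$ are both continuous on $\overline{G}^{\ell}$. As $G$ sits densely inside $\overline{G}^{\ell}$ under the horofunction embedding, it therefore suffices to check that these two continuous functions agree at every point $k \in G$.

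First I would pin down the conventions. By definition $\varphi_{g}^{\ell}(k) = \ell(k) - \ell(g^{-1}k)$ for $k \in G$, while the dual action on functions induced by the left-multiplication action $g \cdot k = gk$ on $G$ is given by $(g.f)(k) = f(g^{-1}k)$. In particular, evaluating the translated function at $k \in G$ yields $(g.\varphi_{h}^{\ell})(k) = \varphi_{h}^{\ell}(g^{-1}k) = \ell(g^{-1}k) - \ell(h^{-1}g^{-1}k)$.

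The heart of the argument is then a short telescoping computation. Evaluating the right-hand side at $k \in G$ gives
\[
(g.\varphi_{h}^{\ell})(k) + \varphi_{g}^{\ell}(k) = \bigl(\ell(g^{-1}k) - \ell(h^{-1}g^{-1}k)\bigr) + \bigl(\ell(k) - \ell(g^{-1}k)\bigr) = \ell(k) - \ell(h^{-1}g^{-1}k),
\]
where the two occurrences of $\ell(g^{-1}k)$ cancel. Since $(gh)^{-1} = h^{-1}g^{-1}$, the final expression is precisely $\ell(k) - \ell\bigl((gh)^{-1}k\bigr) = \varphi_{gh}^{\ell}(k)$, which agrees with the left-hand side on $G$ and hence, by density and continuity, on all of $\overline{G}^{\ell}$.

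The computation itself is elementary; the only genuine point of care is the bookkeeping with the action convention, namely that left multiplication on $G$ induces $(g.f)(k) = f(g^{-1}k)$ with the inverse landing on the argument. Getting this direction right is exactly what makes the $\ell(g^{-1}k)$ terms cancel and produces $h^{-1}g^{-1}$ rather than $g^{-1}h^{-1}$ inside the length function; a misplaced inverse or a swapped order would destroy the telescoping. Once the convention is fixed, the density of $G$ in $\overline{G}^{\ell}$ together with the continuity of both sides completes the proof without further work.
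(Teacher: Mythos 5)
Your proof is correct and follows essentially the same route as the paper: the identical telescoping computation (inserting and cancelling $\ell(g^{-1}k)$) verifies the identity pointwise on $G$, and continuity of the three functions together with the density of $G$ in $\overline{G}^{\ell}$ extends it to the whole compactification. Your extra care in spelling out the convention $(g.f)(k)=f(g^{-1}k)$ is a welcome clarification but does not change the argument.
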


\begin{proof} For all $g,h,x\in G$, 
\begin{eqnarray*}
\varphi_{gh}^{\ell}(x) & = & \ell(x)-\ell(h^{-1}g^{-1}x)\\
 & = & \ell(x)-\ell(g^{-1}x)+\ell(g^{-1}x)-\ell(h^{-1}g^{-1}x)\\
 & = & \varphi_{g}^{\ell}(x)+\varphi_{h}^{\ell}(g^{-1}x).
\end{eqnarray*}
The claim then follows from the fact that the functions $\varphi_{gh}^{\ell}$,
$\varphi_{g}^{\ell}$, and $\varphi_{h}^{\ell}$ are continuous and
that $G$ is dense in $\overline{G}^{\ell}$. \end{proof}

By the discussion in Subsection \ref{subsec:Horofunction-compactifications},
the commutative C$^{\ast}$-algebra $C(\overline{G}^{\ell})$ is isomorphic
to the unital C$^{\ast}$-subalgebra $\mathcal{G}(G,\ell)$ of $\ell^{\infty}(G)$
generated by $C_{0}(G)$ and the set $\{\varphi_{g}^{\ell}\mid g\in G\}$,
where again the $\varphi_{g}^{\ell}$ are viewed as multiplication
operators. In the setting from before, define $\mathcal{C}(A,G,\ell)$
as the C$^{\ast}$-subalgebra of $\mathcal{B}(\mathcal{H})$ generated
by $A$, $\mathbb{C}1\otimes\mathcal{G}(G,\ell)$ and $C_{r}^{\ast}(G)$.
Then, $[1\otimes M_{l},x]\in\mathcal{C}(A,G,\ell)$ for every $x\in C_{c}(G,\mathcal{A})$.

For the sake of transparency, let us in the following denote the canonical
embeddings of $A$ and $\mathcal{G}(G,\ell)\cong C(\overline{G}^{\ell})$
by 
\begin{eqnarray}
\nonumber
\widetilde{\pi} &:& A\hookrightarrow\mathcal{C}(A,G,\ell)\subseteq\mathcal{B}(\mathcal{H}),\\
\nonumber
\pi_{\rtimes} &:& A\hookrightarrow A\rtimes_{\alpha,r}G\subseteq\mathcal{B}(\mathcal{H}),\\
\nonumber
\nu &:& \mathcal{G}(G,\ell)\hookrightarrow\mathcal{B}(\ell^{2}(G)),\\
\nonumber
\widetilde{\nu} &:& \mathcal{G}(G,\ell)\hookrightarrow\mathcal{C}(A,G,\ell)\subseteq\mathcal{B}(\mathcal{H}), \\
\nonumber
\nu_{\rtimes} &:& \mathcal{G}(G,\ell)\hookrightarrow C(\overline{G}^{\ell})\rtimes_{\beta,r}G.
\end{eqnarray}
Similarly, denote the left regular representation operators in $\mathcal{C}(A,G,\ell)\subseteq\mathcal{B}(\mathcal{H})$
by $\widetilde{\lambda}_{g}$, $g\in G$ and the ones in $\mathcal{B}(\ell^{2}(G))$,
$A\rtimes_{\alpha,r}G\subseteq\mathcal{B}(\mathcal{H})$ and $C(\overline{G}^{\ell})\rtimes_{\beta,r}G$
by $\lambda_{g}$, $g\in G$. Note that $\widetilde{\pi}(a)=\pi_{\rtimes}(a)$
and $\widetilde{\lambda}_{g}=\lambda_{g}$ in $\mathcal{B}(\mathcal{H})$
for all $a\in A$, $g\in G$.

\begin{proposition} \label{Isomorphism-1} The map $\mathcal{C}(A,G,\ell)\rightarrow(A\rtimes_{\alpha,r}G)\otimes(C(\overline{G}^{\ell})\rtimes_{\beta,r}G)\subseteq\mathcal{B}(\mathcal{H})\otimes\mathcal{B}(\ell^{2}(G)\otimes\ell^{2}(G))$
given by $\widetilde{\pi}(a)\mapsto\pi_{\rtimes}(a)\otimes1$, $\widetilde{\nu}(f)\mapsto1\otimes\nu_{\rtimes}(f)$
and $\widetilde{\lambda}_{g}\mapsto\lambda_{g}\otimes\lambda_{g}$
for $a\in A$, $f\in\mathcal{G}(G,\ell)\cong C(\overline{G}^{\ell})$
and $g\in G$ is a well-defined $\ast$-isomorphism onto its image.
\end{proposition}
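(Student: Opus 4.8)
The plan is to identify both $\mathcal{C}(A,G,\ell)$ and the image algebra with one and the same reduced crossed product, namely $B\rtimes_{\gamma,r}G$ where $B:=A\otimes C(\overline{G}^{\ell})$ and $\gamma:=\alpha\otimes\beta$ is the diagonal action, and to recognise the map in the statement as the composition of these two identifications. Since a composition of $\ast$-isomorphisms onto images is again a $\ast$-isomorphism onto its image, it suffices to prove the two identifications separately and to check that they match the generators as prescribed, i.e. $\widetilde{\pi}(a)\leftrightarrow a\otimes1\leftrightarrow\pi_{\rtimes}(a)\otimes1$, $\widetilde{\nu}(f)\leftrightarrow 1\otimes f\leftrightarrow 1\otimes\nu_{\rtimes}(f)$, and $\widetilde{\lambda}_{g}\leftrightarrow u_{g}\leftrightarrow\lambda_{g}\otimes\lambda_{g}$, where $u_{g}$ denotes the canonical unitaries in $B\rtimes_{\gamma,r}G$. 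Composing the first isomorphism with the inverse of the second then yields precisely the stated map.

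First I would set up the common coefficient algebra. In $\mathcal{B}(\mathcal{H})$ the subalgebras $\widetilde{\pi}(A)$ and $\widetilde{\nu}(\mathcal{G}(G,\ell))$ commute, so $a\otimes f\mapsto\widetilde{\pi}(a)\widetilde{\nu}(f)$ is a $\ast$-homomorphism of the (unique, as $\mathcal{G}(G,\ell)\cong C(\overline{G}^{\ell})$ is abelian and hence nuclear) tensor product $B=A\otimes C(\overline{G}^{\ell})$ into $\mathcal{B}(\mathcal{H})$. Along the basis of $\ell^{2}(G)$ this representation decomposes as $\bigoplus_{h\in G}(\pi\circ\alpha_{h^{-1}})\otimes\mathrm{ev}_{h}$; since $\pi$ is faithful and, by density of $G$ in $\overline{G}^{\ell}$, an element of $C(\overline{G}^{\ell},A)\cong B$ vanishing at all $h\in G$ is zero, this representation is faithful, so $\widetilde{\pi}(A)$ and $\widetilde{\nu}(\mathcal{G}(G,\ell))$ jointly generate a copy of $B$. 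A direct computation gives the covariance relations $\widetilde{\lambda}_{g}\widetilde{\pi}(a)\widetilde{\lambda}_{g}^{\ast}=\widetilde{\pi}(\alpha_{g}(a))$ and $\widetilde{\lambda}_{g}\widetilde{\nu}(f)\widetilde{\lambda}_{g}^{\ast}=\widetilde{\nu}(\beta_{g}(f))$, so the $\widetilde{\lambda}_{g}$ implement $\gamma$ on this copy of $B$, and $\mathcal{C}(A,G,\ell)$ is the integrated form of the covariant representation $(\widetilde{\pi}\cdot\widetilde{\nu},\widetilde{\lambda})$ of $(B,G,\gamma)$. The same computation on the target side shows that $\pi_{\rtimes}(A)\otimes1$ and $1\otimes\nu_{\rtimes}(\mathcal{G}(G,\ell))$ commute, sit in complementary tensor factors, are each faithful, hence generate a copy of $B$, and that $\lambda_{g}\otimes\lambda_{g}$ implements the same action $\gamma$. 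Thus both algebras are integrated forms of covariant representations of the single system $(B,G,\gamma)$ with matching generators, and each receives a canonical $\ast$-homomorphism from the full crossed product $B\rtimes_{\gamma}G$; the content of the next step is that each factors through the reduced crossed product faithfully.

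The crux is to show that each of these two covariant representations integrates to a \emph{faithful} representation of the \emph{reduced} crossed product $B\rtimes_{\gamma,r}G$. For $\mathcal{C}(A,G,\ell)$ the coefficient representation is, fibrewise, induced from $\sigma:=\pi\otimes\mathrm{ev}_{e}\colon B\to\mathcal{B}(\mathcal{H}_{A})$, so that $\mathcal{C}(A,G,\ell)=\mathrm{Ind}\,\sigma$ together with the left regular unitaries $\lambda_{g}$. Although $\sigma$ is not faithful, with $\ker\sigma=A\otimes I_{e}$ for $I_{e}=\{f:f(e)=0\}$, its kernel contains no nonzero $\gamma$-invariant ideal: any such ideal lies in $\bigcap_{h\in G}\gamma_{h}(A\otimes I_{e})=A\otimes\bigcap_{h\in G}I_{h}=\{0\}$, again by density of $G$ in $\overline{G}^{\ell}$. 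By the standard faithfulness criterion for induced representations of reduced crossed products over discrete groups (an induced representation is faithful on $B\rtimes_{\gamma,r}G$ as soon as the coefficient representation has no nonzero invariant ideal in its kernel; cf. \cite{BrownOzawa}), this gives $\mathcal{C}(A,G,\ell)\cong B\rtimes_{\gamma,r}G$ compatibly with the generators. For the target one first applies the Fell-absorption unitary $W$ on the two copies of $\ell^{2}(G)$ carrying the regular representations, $W(\delta_{s}\otimes\delta_{t})=\delta_{t^{-1}s}\otimes\delta_{t}$, which satisfies $W(\lambda_{g}\otimes\lambda_{g})W^{\ast}=1\otimes\lambda_{g}$; a short bookkeeping computation then shows that, after this conjugation, the target representation is exactly $\mathrm{Ind}\,\sigma_{2}$ for the \emph{faithful} coefficient representation $\sigma_{2}$ of $B$ given by the $A\rtimes_{\alpha,r}G$-type representation of $A$ and the multiplication representation $\nu$ of $\mathcal{G}(G,\ell)$ on complementary legs. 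Faithfulness of $\sigma_{2}$ immediately yields faithfulness of $\mathrm{Ind}\,\sigma_{2}$, so the image is $\cong B\rtimes_{\gamma,r}G$ as well, compatibly with the generators. Composing the two identifications produces the asserted $\ast$-isomorphism onto the image.

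\textbf{Main obstacle.} The delicate point is the faithfulness on the reduced completion for $\mathcal{C}(A,G,\ell)$: unlike the target, its coefficient representation $\sigma=\pi\otimes\mathrm{ev}_{e}$ is genuinely non-faithful and cannot be repaired by a change of coordinates, so one cannot reduce to the trivial ``faithful coefficient representation'' case and must invoke the invariant-ideal criterion. It is exactly here that the geometry enters, through the density of $G$ in its horofunction compactification $\overline{G}^{\ell}$, which forces $\bigcap_{h\in G}I_{h}=\{0\}$ and hence that $\ker\sigma$ carries no invariant ideal; this is what makes the induced representation faithful. The remaining ingredients — the commutation and covariance relations, the tensor-factor identification of $B$, and the Fell-absorption rewriting of the target — are routine, but the bookkeeping in the Fell-absorption step must be carried out carefully in order to match $\mathrm{Ind}\,\sigma_{2}$ on the nose.
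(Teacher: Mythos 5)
Your argument is correct, but it takes a genuinely different route from the paper's. You identify both $\mathcal{C}(A,G,\ell)$ and the target subalgebra abstractly with the reduced crossed product $(A\otimes C(\overline{G}^{\ell}))\rtimes_{\gamma,r}G$ of the diagonal system, treating the source via the invariant-ideal faithfulness criterion for induced representations (the coefficient representation $\pi\otimes\mathrm{ev}_{e}$ is not faithful, but its kernel contains no nonzero invariant ideal by density of $G$ in $\overline{G}^{\ell}$) and the target via Fell absorption applied to a faithful coefficient representation. The paper never introduces the auxiliary system: it invokes Lemma \ref{Independence} to re-represent $\mathcal{C}(A,G,\ell)$ on $\mathcal{H}\otimes\ell^{2}(G)$ with $A$ now acting on all of $\mathcal{H}$, conjugates by the explicit unitary $U(\xi\otimes\delta_{g})=\widetilde{\lambda}_{g}\xi\otimes\delta_{g}$ to produce the generators $\pi_{\rtimes}(a)\otimes1$, $1\otimes\nu(f)$, $\lambda_{g}\otimes\lambda_{g}$ inside $(A\rtimes_{\alpha,r}G)\otimes C_{u}^{\ast}(G)$, and then identifies the second leg with $C(\overline{G}^{\ell})\rtimes_{\beta,r}G$ via $C_{u}^{\ast}(G)\cong\ell^{\infty}(G)\rtimes_{r}G$. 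At bottom the two proofs rest on the same mechanism --- the norm of an element is recovered from compressions to diagonal blocks, i.e.\ from the canonical conditional expectation, which is precisely how one proves both Lemma \ref{Independence} and your faithfulness criterion --- so neither is more elementary; your version buys the extra structural information that $\mathcal{C}(A,G,\ell)\cong(A\otimes C(\overline{G}^{\ell}))\rtimes_{\gamma,r}G$, while the paper's stays within explicit unitary conjugations and a lemma it has already established. One caveat: the criterion you cite is not stated in this generality in Brown--Ozawa (their Proposition 4.1.5 assumes a faithful coefficient representation), so you should supply the short argument: compressing $\mathrm{Ind}\,\sigma(x)$ to the blocks $\mathcal{H}_{0}\otimes\mathbb{C}\delta_{h}$ gives $\left\Vert \mathrm{Ind}\,\sigma(x)\right\Vert \geq\sup_{h}\left\Vert \sigma(\gamma_{h^{-1}}(E(x)))\right\Vert =\left\Vert E(x)\right\Vert$ when $\ker\sigma$ contains no invariant ideal, and faithfulness of $E$ on positive elements finishes the job; the Fell-absorption bookkeeping on the target side, which you defer, does check out.
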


\begin{proof} One can view $A$ as being covariantly and faithfully
represented on $\mathcal{H}=\mathcal{H}_{A}\otimes\ell^{2}(G)$ (via
$\widetilde{\pi}$ from before). In turn, by applying Lemma \ref{Independence},
we can both interpret $\mathcal{C}(A,G,\ell)$ as a C$^{\ast}$-subalgebra
of $\mathcal{B}(\mathcal{H}\otimes\ell^{2}(G))$ and as a C$^{\ast}$-subalgebra
of $\mathcal{B}(\mathcal{H})$. Write $\iota$ for the corresponding
embedding $\mathcal{C}(A,G,\ell)\hookrightarrow\mathcal{B}(\mathcal{H}\otimes\ell^{2}(G))$
and define a unitary $U:\mathcal{H}\otimes\ell^{2}(G)\rightarrow\mathcal{H}\otimes\ell^{2}(G)$
via $U(\xi\otimes\delta_{g}):=\widetilde{\lambda}_{g}\xi\otimes\delta_{g}$.
For $a\in A$, $\xi\in\mathcal{H}$, $g\in G$, one has
\begin{eqnarray*}
U(\iota\circ\widetilde{\pi})(a)U^{\ast}(\xi\otimes\delta_{g}) & = & U(\iota\circ\widetilde{\pi})(a)(\widetilde{\lambda}_{g^{-1}}\xi\otimes\delta_{g})\\
 & = & U(\alpha_{g^{-1}}(\widetilde{\pi}(a))\widetilde{\lambda}_{g^{-1}}\xi\otimes\delta_{g})\\
 & = & U(\widetilde{\lambda}_{g^{-1}}\widetilde{\pi}(a)\xi\otimes\delta_{g})\\
 & = & \widetilde{\pi}(a)\xi\otimes\delta_{g} \\
\nonumber
&=& \pi_{\rtimes}(a)\xi\otimes\delta_{g},
\end{eqnarray*}
so $U(\iota\circ\widetilde{\pi})(a)U^{\ast}=\pi_{\rtimes}(a)\otimes1$.
For $f\in C(\overline{G}^{\ell})\cong \mathcal{G}(G,\ell)$, $\xi\in\mathcal{H}$,
$g\in G$, 
\begin{eqnarray*}
U(\iota\circ\widetilde{\nu}(f))U^{\ast}(\xi\otimes\delta_{g}) & = & U(\iota\circ\widetilde{\nu})(f)(\widetilde{\lambda}_{g^{-1}}\xi\otimes\delta_{g})\\
 & = & f(g)U(\widetilde{\lambda}_{g^{-1}}\xi\otimes\delta_{g})\\
 & = & f(g)(\xi\otimes\delta_{g}),
\end{eqnarray*}
so $U(\iota\circ\widetilde{\nu})(f)U^{\ast}=1\otimes\nu(f)$. Lastly,
for $\xi\in\mathcal{H}$, $g,h\in G$, 
\begin{eqnarray*}
U\iota(\widetilde{\lambda}_{g})U^{\ast}(\xi\otimes\delta_{h}) & = & U\iota(\widetilde{\lambda}_{g})(\widetilde{\lambda}_{h^{-1}}\xi\otimes\delta_{h})\\
 & = & U(\widetilde{\lambda}_{h^{-1}}\xi\otimes\delta_{gh})\\
 & = & \widetilde{\lambda}_{g}\xi\otimes\delta_{gh} \\
\nonumber
&=& \lambda_{g}\xi\otimes\delta_{gh},
\end{eqnarray*}
so $U\iota(\widetilde{\lambda}_{g})U^{\ast}=\lambda_{g}\otimes\lambda_{g}$.
This implies that conjugation by $U$ implements a $\ast$-embedding of $\mathcal{C}(A,G,\ell)$ into  $(A\rtimes_{\alpha,r}G)\otimes C_{u}^{\ast}(G)$
via $\widetilde{\pi}(a)\mapsto\pi_{\rtimes}(a)\otimes1$, $\widetilde{\nu}(f)\mapsto1\otimes\nu(f)$
and $\widetilde{\lambda}_{g}\mapsto\lambda_{g}\otimes\lambda_{g}$
for $a\in A$, $f\in C(\overline{G}^{\ell})\cong\mathcal{G}(G,\ell)$
and $g\in G$. Here, $C_{u}^{\ast}(G)\subseteq\mathcal{B}(\ell^{2}(G))$
denotes the uniform Roe algebra which is generated by $\ell^{\infty}(G)$
and $C_{r}^{\ast}(G)$ in $\mathcal{B}(\ell^{2}(G))$. By \cite[Proposition 5.1.3]{BrownOzawa},
$C_{u}^{\ast}(G)\cong\ell^{\infty}(G)\rtimes_{r}G$ canonically where
the crossed product is taken with respect to the left translation
action. In particular, the C$^{\ast}$-subalgebra of $\mathcal{B}(\ell^{2}(G))$
generated by $\mathcal{G}(G,\ell)$ and $C_{r}^{\ast}(G)$ identifies
with $C(\overline{G}^{\ell})\rtimes_{\beta,r}G$. We deduce the claim.
\end{proof}

Note that the proof of Proposition \ref{Isomorphism-1} does not require
the action $\beta$ to be amenable.

For notational convenience, if $S$ is a subset of $G$, we define
\[
C_{c}(S,\mathcal{A}):=\left\{ \sum_{g\in S}a_{g}\lambda_{g}\mid(a_{g})_{g\in G}\subseteq\mathcal{A}\text{ with }a_{g}=0\text{ for almost all }g\in S\right\} \subseteq A\rtimes_{\alpha,r}G
\]
and $C_{c}(S,A)\subseteq A\rtimes_{\alpha,r}G$ analogously. If $S$
is a subgroup of $G$, then these spaces will be $\ast$-subalgebras
of $A\rtimes_{\alpha,r}G$.

\begin{lemma} \label{ConditionalExpectation-1} Let $H\subseteq G$
be a subgroup. Then there exists a contractive linear map $\mathbb{E}_{H}$
on $\mathcal{B}(\mathcal{H})$ such that for every $x=\sum_{g\in G}a_{g}\lambda_{g}\in C_{c}(G,\mathcal{A})\subseteq\mathcal{B}(\mathcal{H})$
with $(a_{g})_{g\in G}\subseteq\mathcal{A}$, the identities
$\mathbb{E}_{H}(x)=\sum_{h\in H}a_{h}\lambda_{h} \in C_c(H,\mathcal{A})$, $\mathbb{E}_{H}([D_{A}\otimes1,x]\lambda_{g^{-1}})\lambda_{g}=[D_{A}\otimes1,\mathbb{E}_{H}(x\lambda_{g^{-1}})\lambda_{g}]$
and $\mathbb{E}_{H}([1\otimes M_{\ell},x]\lambda_{g^{-1}})\lambda_{g}=[1\otimes M_{\ell},\mathbb{E}_{H}(x\lambda_{g^{-1}})\lambda_{g}]$
hold. \end{lemma}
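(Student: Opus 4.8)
The plan is to realize $\mathbb{E}_H$ as the \emph{pinching map} associated with the partition of $G$ into right cosets of $H$. For each right coset $c\in H\backslash G$ let $P_c\in\mathcal{B}(\ell^2(G))$ be the orthogonal projection onto $\ell^2(c)\subseteq\ell^2(G)$, and put $Q_c:=1\otimes P_c\in\mathcal{B}(\mathcal{H})$. The $Q_c$ are mutually orthogonal projections with $\sum_c Q_c=1$ strongly, so I would define
\[
\mathbb{E}_H(z):=\sum_{c\in H\backslash G}Q_c z Q_c,\qquad z\in\mathcal{B}(\mathcal{H}).
\]
For fixed $\xi\in\mathcal{H}$ the vectors $Q_c z Q_c\xi$ lie in the pairwise orthogonal ranges of the $Q_c$, whence $\sum_c\|Q_c z Q_c\xi\|^2\leq\|z\|^2\sum_c\|Q_c\xi\|^2=\|z\|^2\|\xi\|^2$; this shows that the sum converges in norm and that $\mathbb{E}_H$ is a well-defined contractive (in fact unital completely positive) linear map on $\mathcal{B}(\mathcal{H})$.

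Next I would analyse the action of $\mathbb{E}_H$ on monomials via matrix coefficients. Since $Q_c z Q_c$ retains exactly the entries of $z$ between $\delta_{g_2}$ and $\delta_{g_1}$ with $g_1,g_2$ in a common right coset, $\mathbb{E}_H$ keeps precisely those entries with $g_1g_2^{-1}\in H$. For a monomial $b\lambda_{g_0}$ with $b$ \emph{diagonal} (i.e. $b$ commutes with $1\otimes\ell^\infty(G)$, equivalently $b(\xi\otimes\delta_h)=b_h\xi\otimes\delta_h$ for some $(b_h)_{h\in G}\subseteq\mathcal{B}(\mathcal{H}_A)$) the nonzero entries occur exactly at pairs with $g_1g_2^{-1}=g_0$, so $\mathbb{E}_H(b\lambda_{g_0})=b\lambda_{g_0}$ if $g_0\in H$ and $\mathbb{E}_H(b\lambda_{g_0})=0$ otherwise. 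As each $\widetilde{\pi}(a_g)$ is diagonal, this yields the first identity $\mathbb{E}_H(\sum_g a_g\lambda_g)=\sum_{h\in H}a_h\lambda_h\in C_c(H,\mathcal{A})$. Applying the same description to $x\lambda_{g^{-1}}=\sum_{g'}a_{g'}\lambda_{g'g^{-1}}$ gives, more generally, $\mathbb{E}_H(x\lambda_{g^{-1}})\lambda_g=\sum_{g'\in Hg}a_{g'}\lambda_{g'}$, the truncation of $x$ to the coset $Hg$.

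For the two commutator identities the crucial observation is that both $[D_A\otimes1,\,\cdot\,]$ and $[1\otimes M_\ell,\,\cdot\,]$ map $C_c(G,\mathcal{A})$ into generalized Fourier series $\sum_{g'}c_{g'}\lambda_{g'}$ with diagonal coefficients, acting \emph{mode by mode} (the coefficient of $\lambda_{g'}$ depends only on $a_{g'}$). Indeed $[D_A\otimes1,\lambda_{g'}]=0$ because $\lambda_{g'}$ acts on the second tensor leg, and $[D_A\otimes1,\widetilde{\pi}(a)](\xi\otimes\delta_h)=[D_A,\pi(h^{-1}.a)]\xi\otimes\delta_h$ is diagonal, so $[D_A\otimes1,x]=\sum_{g'}b_{g'}\lambda_{g'}$ with $b_{g'}:=[D_A\otimes1,\widetilde{\pi}(a_{g'})]$ diagonal; by Lemma \ref{Commutator identity}, $[1\otimes M_\ell,x]=\sum_{g'}(1\otimes\varphi_{g'}^{\ell})\widetilde{\pi}(a_{g'})\lambda_{g'}$ has diagonal coefficients as well. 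Feeding these series into the truncation description of the previous step, both sides of the second identity collapse to $\sum_{g'\in Hg}b_{g'}\lambda_{g'}$, and both sides of the third collapse to $\sum_{g'\in Hg}(1\otimes\varphi_{g'}^{\ell})\widetilde{\pi}(a_{g'})\lambda_{g'}$, where on the right-hand side one invokes Lemma \ref{Commutator identity} once more on the already truncated element.

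The only place where care is genuinely needed is precisely this interchange of $\mathbb{E}_H$ with the two commutators: one must check that the commutators neither create off-$H$ Fourier modes that $\mathbb{E}_H$ would fail to annihilate nor mix distinct modes, which is exactly the assertion that their coefficients are diagonal and mode-wise. The conjugation by $\lambda_{g^{-1}}$ and $\lambda_g$ is what turns $\mathbb{E}_H$ from truncation onto $H$ into truncation onto the coset $Hg$, and it is this matching that makes the mode-wise commutators commute with the truncation. Everything else reduces to the routine pinching estimate and the matrix-coefficient bookkeeping sketched above.
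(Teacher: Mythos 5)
Your proposal is correct and is essentially the paper's own argument: the paper also defines $\mathbb{E}_H$ as the pinching $x\mapsto\sum_{i}(1\otimes P_{Hg_i})x(1\otimes P_{Hg_i})$ over the right cosets $Hg_i$, proves contractivity by the same orthogonality of supports and ranges, and verifies the three identities by the same coset-truncation computation (using Lemma \ref{Commutator identity} for the $M_\ell$ case). The only difference is presentational: you organize the verification via matrix coefficients and "diagonal, mode-wise" Fourier coefficients, while the paper evaluates directly on basis vectors $\xi\otimes\delta_{h'g_i}$.
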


\begin{proof} Let $(g_{i})_{i\in I}\subseteq G$ be a family of elements
with $G=\bigcup_{i\in I}Hg_{i}$ and $Hg_{i}\neq Hg_{j}$ for $i\neq j$.
For $i\in I$, write $P_{Hg_{i}}$ for the orthogonal projection onto
the closed subspace of $\ell^{2}(G)$ spanned by all orthonormal basis
vectors $\delta_{hg_{i}}$, $h\in H$. We claim that the linear map
$\mathbb{E}_{H}$ given by $x\mapsto\sum_{i\in I}(1\otimes P_{Hg_{i}})x(1\otimes P_{Hg_{i}})$
satisfies the required conditions, where the sum converges in the strong
operator topology. Indeed, for every $x\in\mathcal{B}(\mathcal{H})$,
\[
\Vert\sum_{i\in I}(1\otimes P_{Hg_{i}})x(1\otimes P_{Hg_{i}})\Vert\leq\sup_{i\in I}\left\Vert (1\otimes P_{Hg_{i}})x(1\otimes P_{Hg_{i}})\right\Vert \leq\left\Vert x\right\Vert ,
\]
as the operators $(1\otimes P_{Hg_{i}})x(1\otimes P_{Hg_{i}})$, $i\in I$
have pairwise orthogonal support and ranges. For $x=\sum_{g\in G}a_{g}\lambda_{g}\in C_{c}(G,\mathcal{A})\subseteq\mathcal{B}(\mathcal{H})$
with $(a_{g})_{g\in G}\subseteq\mathcal{A}$ and $\xi\in\mathcal{H}_{A}$,
$h^{\prime}\in H$, $i\in I$, we further find 
\begin{eqnarray}
\nonumber
\left(\mathbb{E}_{H}(x)\right)(\xi\otimes\delta_{h^{\prime}g_{i}}) &=& ((1\otimes P_{Hg_{i}})\sum_{g\in G}a_{g}\lambda_{g})(\xi\otimes\delta_{h^{\prime}g_{i}}) \\
\nonumber
&=& \sum_{g\in G}(1\otimes P_{Hg_{i}})\left(((gh^{\prime}g_{i})^{-1}.a_{g})\xi\otimes\delta_{gh^{\prime}g_{i}}\right) \\
\nonumber
&=& \sum_{h\in H}\left(((hh^{\prime}g_{i})^{-1}.a_{h})\xi\otimes\delta_{hh^{\prime}g_{i}}\right) \\
\nonumber
&=& (\sum_{h\in H}a_{h}\lambda_{h})(\xi\otimes\delta_{h^{\prime}g_{i}}),
\end{eqnarray}
so that $\mathbb{E}_{H}(x)=\sum_{h\in H}a_{h}\lambda_{h}$. For $g,g^{\prime}\in G$,
there exist $i\in I$ and $h^{\prime}\in H$ such that $gg^{\prime}=h^{\prime}g_{i}$.
It follows that
\begin{eqnarray}
\nonumber
\left(\mathbb{E}_{H}([D_{A}\otimes1,x]\lambda_{g^{-1}})\lambda_{g}\right)(\xi\otimes\delta_{g^{\prime}}) &=& ((1\otimes P_{Hg_{i}})\sum_{g^{\prime\prime}\in G}[D_{A}\otimes1,a_{g^{\prime\prime}}]\lambda_{g^{\prime\prime}g^{-1}})(\xi\otimes\delta_{h^{\prime}g_{i}}) \\
\nonumber
&=& (\sum_{g^{\prime\prime}\in Hg}[D_{A}\otimes1,a_{g^{\prime\prime}}]\lambda_{g^{\prime\prime}g^{-1}})(\xi\otimes\delta_{h^{\prime}g_{i}}) \\
\nonumber
&=& (\sum_{g^{\prime\prime}\in Hg}[D_{A}\otimes1,a_{g^{\prime\prime}}]\lambda_{g^{\prime\prime}})(\xi\otimes\delta_{g^{\prime}}) \\
\nonumber
&=& \left([D_{A}\otimes1,\mathbb{E}_{H}(x\lambda_{g^{-1}})\lambda_{g}]\right)(\xi\otimes\delta_{g^{\prime}})
\end{eqnarray}
and with Lemma \ref{Commutator identity},
\begin{eqnarray}
\nonumber
\left(\mathbb{E}_{H}([1\otimes M_{\ell},x]\lambda_{g^{-1}})\lambda_{g}\right)(\xi\otimes\delta_{g^{\prime}}) &=& ((1\otimes P_{Hg_{i}})\sum_{g\in G}(1\otimes\varphi_{g^{\prime\prime}}^{\ell})a_{g^{\prime\prime}}\lambda_{g^{\prime\prime}g^{-1}})(\xi\otimes\delta_{h^{\prime}g_{i}}) \\
\nonumber
&=& (\sum_{g^{\prime\prime}\in Hg}(1\otimes\varphi_{g^{\prime\prime}}^{\ell})a_{g^{\prime\prime}}\lambda_{g^{\prime\prime}g^{-1}})(\xi\otimes\delta_{h^{\prime}g_{i}}) \\
\nonumber
&=& (\sum_{g^{\prime\prime}\in Hg}(1\otimes\varphi_{g^{\prime\prime}}^{\ell})a_{g^{\prime\prime}}\lambda_{g^{\prime\prime}})(\xi\otimes\delta_{g^{\prime}}) \\
\nonumber
&=& ([1\otimes M_{\ell},\mathbb{E}_{H}(x\lambda_{g^{-1}})\lambda_{g}])(\xi\otimes\delta_{g^{\prime}}).
\end{eqnarray}
We deduce that $\mathbb{E}_{H}([D_{A}\otimes1,x]\lambda_{g^{-1}})\lambda_{g}=[D_{A}\otimes1,\mathbb{E}_{H}(x\lambda_{g^{-1}})\lambda_{g}]$
and $\mathbb{E}_{H}([1\otimes M_{\ell},x]\lambda_{g^{-1}})\lambda_{g}=[1\otimes M_{\ell},\mathbb{E}_{H}(x\lambda_{g^{-1}})\lambda_{g}]$,
as claimed. \end{proof}

\vspace{3mm}


\subsection{Crossed product C$^{\ast}$-algebras as compact quantum metric spaces\label{MainPart}}

Consider the setting of Subsection \ref{subsec:Crossed-product-C-algebras}; that is, let $(\mathcal{A},\mathcal{H}_{A},D_{A})$ be a non-degenerate
odd spectral triple on a separable unital C$^{\ast}$-algebra $A$
and assume that the induced Lipschitz semi-norm $L_{D_{A}}(a):=\left\Vert [D_{A},a]\right\Vert ,a\in\mathcal{A}$
defines a compact quantum metric space $(A,L_{D_{A}})$. Let further
$\alpha:G\rightarrow\text{Aut}(A)$ be a metrically equicontinuous
action of a discrete group and $\ell:G\rightarrow\mathbb{R}_{+}$
a proper length function on $G$. It is natural to ask whether the
even spectral triple $(C_{c}(G,\mathcal{A}),\mathcal{H}\oplus\mathcal{H},D)$
defined in Subsection \ref{subsec:Crossed-product-C-algebras} induces
a Lip-metric on the state space of $A\rtimes_{\alpha,r}G$. This question
was formulated in \cite{HSWZ13} in the case of $G=\mathbb{Z}$ and
length functions induced by finite symmetric generating sets. The
discussion in \cite[Subsection 2.3]{HSWZ13} implies the following
convenient criterion. We include its proof for the convenience of the reader.

\begin{proposition}[{\cite[Subsection 2.3]{HSWZ13}}] \label{QuantumMetricSpaceCharacterization}
The even spectral triple $(C_{c}(G,\mathcal{A}),\mathcal{H}\oplus\mathcal{H},D)$
defined in Subsection \ref{subsec:Crossed-product-C-algebras}
is a spectral metric space if and only if the set 
\begin{eqnarray}
\left\{ x\in C_{c}(G,\mathcal{A})\mid\left\Vert [D_{A}\otimes1,x]\right\Vert \leq1\text{ and }\left\Vert [1\otimes M_{\ell},x]\right\Vert \leq1\right\}  \label{set}
\end{eqnarray}
has totally bounded image in $(A\rtimes_{\alpha,r}G)/\mathbb{C}1$.
\end{proposition}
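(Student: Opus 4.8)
The plan is to reduce the statement to condition (2) of Theorem \ref{Characterization} by comparing the Lipschitz seminorm $L_D$ of the even triple with the pair of seminorms $x\mapsto\|[D_A\otimes1,x]\|$ and $x\mapsto\|[1\otimes M_\ell,x]\|$ appearing in \eqref{set}. Writing $T:=D_A\otimes1$ and $S:=1\otimes M_\ell$, the first step is to compute $[D,x]$ explicitly. Since $x\in C_c(G,\mathcal{A})$ acts diagonally (as $\widetilde{\pi}(x)\oplus\widetilde{\pi}(x)$) on $\mathcal{H}\oplus\mathcal{H}$ and $D$ is off-diagonal with entries $T\mp iS$, a direct calculation gives
\[
[D,x]=\left(\begin{array}{cc} 0 & [T,x]-i[S,x] \\ {[T,x]}+i[S,x] & 0 \end{array}\right).
\]
Both commutators are bounded operators, the first by metric equicontinuity of $\alpha$ and the second by Lemma \ref{Commutator identity}, so $[D,x]$ is a bounded off-diagonal operator on $\mathcal{H}\oplus\mathcal{H}$.

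Next I would use the elementary fact that an off-diagonal operator matrix $\left(\begin{smallmatrix} 0 & B \\ C & 0\end{smallmatrix}\right)$ has norm $\max(\|B\|,\|C\|)$ (as $M^\ast M$ is then block-diagonal with blocks $C^\ast C$ and $B^\ast B$). This yields
\[
L_D(x)=\max\bigl(\|[T,x]-i[S,x]\|,\ \|[T,x]+i[S,x]\|\bigr).
\]
Recovering $[T,x]$ and $[S,x]$ as linear combinations of the two entries and applying the triangle inequality in both directions gives the two-sided estimate
\[
\max\bigl(\|[T,x]\|,\|[S,x]\|\bigr)\ \le\ L_D(x)\ \le\ 2\max\bigl(\|[T,x]\|,\|[S,x]\|\bigr),
\]
valid for all $x\in C_c(G,\mathcal{A})$. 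In other words, $L_D$ is equivalent, up to the factor $2$, to the seminorm $x\mapsto\max(\|[T,x]\|,\|[S,x]\|)$.

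Finally I would invoke condition (2) of Theorem \ref{Characterization}: the even triple satisfies the Lipschitz condition precisely when the image of $\{x\in C_c(G,\mathcal{A}):L_D(x)\le1\}$ is totally bounded in $(A\rtimes_{\alpha,r}G)/\mathbb{C}1$. Denoting by $V$ the set in \eqref{set}, note that $V=\{x:\max(\|[T,x]\|,\|[S,x]\|)\le1\}$, so the two-sided estimate gives
\[
\{x:L_D(x)\le1\}\ \subseteq\ V\ \subseteq\ \{x:L_D(x)\le2\}=2\{x:L_D(x)\le1\}.
\]
Since a subset of a totally bounded set is totally bounded and multiplication by $2$ is a bi-Lipschitz bijection of the quotient (hence preserves total boundedness), the image of $V$ is totally bounded if and only if the image of $\{x:L_D(x)\le1\}$ is. Combining this with Theorem \ref{Characterization}(2) yields the asserted equivalence.

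The argument is essentially bookkeeping; the only genuine content is the two-sided norm estimate, which in turn rests on the identity for the norm of an off-diagonal operator matrix. The one point to handle with mild care is that equivalence of two seminorms up to a multiplicative constant descends to equivalence of total boundedness of their unit balls in the quotient, for which the displayed inclusions and the scaling remark suffice; I do not anticipate any substantial obstacle.
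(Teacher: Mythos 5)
Your proposal is correct and follows essentially the same route as the paper: both reduce to condition (2) of Theorem \ref{Characterization} via the off-diagonal form of $[D,x\oplus x]$ and deduce mutual inclusions of the two ``unit balls'' up to scaling. Your use of the exact identity $\bigl\Vert\bigl(\begin{smallmatrix}0 & B\\ C & 0\end{smallmatrix}\bigr)\bigr\Vert=\max(\Vert B\Vert,\Vert C\Vert)$ merely sharpens the paper's constants ($\mathcal{Q}'\subseteq 2\mathcal{Q}$ instead of $4\mathcal{Q}$), which changes nothing in substance.
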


\begin{proof} By Theorem \ref{Characterization}, the even spectral
triple $(C_{c}(G,\mathcal{A}),\mathcal{H}\oplus\mathcal{H},D)$ is a spectral metric space if and only if the set $\mathcal{Q}:=\{x\in C_{c}(G,\mathcal{A})\mid\Vert[D,x\oplus x]\Vert\leq1\}$
has totally bounded image in the quotient space $(A\rtimes_{\alpha,r}G)/\mathbb{C}1$.
Denote the set in \eqref{set} by $\mathcal{Q}^{\prime}$. \\

``$\Rightarrow$'' Assume that the even spectral triple $(C_{c}(G,\mathcal{A}),\mathcal{H}\oplus\mathcal{H},D)$
is a spectral metric space. For $x=\sum_{g\in G}a_{g}\lambda_{g}\in\mathcal{Q}^{\prime}$
with $(a_{g})_{g\in G}\subseteq\mathcal{A}$, we have by
\begin{equation}
[D,x\oplus x]=\left(\begin{array}{cc}
0 & [D_{A}\otimes1,x]-i[1\otimes M_{\ell},x]\\{}
[D_{A}\otimes1,x]+i[1\otimes M_{\ell},x] & 0
\end{array}\right)\label{Commutator}
\end{equation}
that 
\[
\Vert[D,x\oplus x]\Vert\leq2\Vert[D_{A}\otimes1,x]\Vert+2\Vert[1\otimes M_{\ell},x]\Vert\leq4.
\]
This means that $\mathcal{Q}^{\prime}\subseteq4\mathcal{Q}$, and therefore,
the image of $\mathcal{Q}^{\prime}$ must be totally bounded in $(A\rtimes_{\alpha,r}G)/\mathbb{C}1$. \\

``$\Leftarrow$'' Assume that the image of $\mathcal{Q}^{\prime}$
is totally bounded in $(A\rtimes_{\alpha,r}G)/\mathbb{C}1$. From
\eqref{Commutator} it follows that $\Vert[D_{A}\otimes1,x]+i[1\otimes M_{\ell},x]\Vert\leq1$
and $\Vert[D_{A}\otimes1,x]-i[1\otimes M_{\ell},x]\Vert\leq1$ for
every $x\in\mathcal{Q}$. But then, $\Vert[D_{A}\otimes1,x]\Vert\leq2$
and $\Vert[1\otimes M_{\ell},x]\Vert\leq2$, and therefore $\mathcal{Q}\subseteq2\mathcal{Q}^{\prime}$.
We conclude that the image of $\mathcal{Q}$ in $(A\rtimes_{\alpha,r}G)/\mathbb{C}1$
must be totally bounded, and therefore, the triple $(C_{c}(G,\mathcal{A}),\mathcal{H}\oplus\mathcal{H},D)$
is a spectral metric space. \end{proof}

Proposition \ref{QuantumMetricSpaceCharacterization} implies that
in the treatment of the question above, it suffices to restrict to cosets of finite index subgroups.

\begin{lemma} \label{FiniteIndex} Let $(\mathcal{A},\mathcal{H}_{A},D_{A})$
be a non-degenerate odd spectral triple on a separable unital C$^{\ast}$-algebra
$A$ and assume that the induced Lipschitz semi-norm $L_{D_{A}}(a):=\left\Vert [D_{A},a]\right\Vert ,a\in\mathcal{A}$
defines a compact quantum metric space $(A,L_{D_{A}})$. Let further
$\alpha:G\rightarrow\text{Aut}(A)$ be a metrically equicontinuous
action of a finitely generated discrete group $G$ equipped with a
proper length function $\ell:G\rightarrow\mathbb{R}_{+}$ and let
$H\leq G$ be a finite index subgroup. Then the even spectral triple
$(C_{c}(G,\mathcal{A}),\mathcal{H}\oplus\mathcal{H},D)$ defined in
Subsection \ref{subsec:Crossed-product-C-algebras} is a spectral metric space
if and only if for every $g\in G$ the set of
all elements $x=\sum_{h\in H}a_{h}\lambda_{hg}\in C_{c}(Hg,\mathcal{A})$
with $(a_{h})_{h\in H}\subseteq\mathcal{A}$ satisfying $\left\Vert [D_{A}\otimes1,x]\right\Vert \leq1$
and $\left\Vert [1\otimes M_{\ell},x]\right\Vert \leq1$ has totally
bounded image in $(A\rtimes_{\alpha,r}G)/\mathbb{C}1$. \end{lemma}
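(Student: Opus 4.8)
The plan is to reduce everything to Proposition \ref{QuantumMetricSpaceCharacterization} and then exploit the conditional expectation $\mathbb{E}_{H}$ of Lemma \ref{ConditionalExpectation-1} to pass between Lipschitz elements and their pieces supported on cosets of $H$. By Proposition \ref{QuantumMetricSpaceCharacterization}, the even spectral triple satisfies the Lipschitz condition precisely when the set $\mathcal{Q}^{\prime}:=\{x\in C_{c}(G,\mathcal{A})\mid\Vert[D_{A}\otimes1,x]\Vert\leq1\text{ and }\Vert[1\otimes M_{\ell},x]\Vert\leq1\}$ has totally bounded image in $(A\rtimes_{\alpha,r}G)/\mathbb{C}1$. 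Writing $\mathcal{Q}^{\prime}_{Hg}$ for the elements of $\mathcal{Q}^{\prime}$ whose support lies in the coset $Hg$ --- of which there are only $n:=[G:H]$ distinct, indexed by representatives $g_{1},\dots,g_{n}$ --- the lemma becomes the equivalence between total boundedness of the image of $\mathcal{Q}^{\prime}$ and that of all the $\mathcal{Q}^{\prime}_{Hg_{i}}$.

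The forward direction I would dispatch immediately: each $\mathcal{Q}^{\prime}_{Hg}$ is literally a subset of $\mathcal{Q}^{\prime}$, so its image in the quotient is a subset of a totally bounded set and hence totally bounded.

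For the converse, given $x=\sum_{g}a_{g}\lambda_{g}\in\mathcal{Q}^{\prime}$ I would split it along cosets by setting $x_{i}:=\mathbb{E}_{H}(x\lambda_{g_{i}^{-1}})\lambda_{g_{i}}$ for each representative $g_{i}$. As in the computation carried out in Lemma \ref{ConditionalExpectation-1} one has $x_{i}=\sum_{g\in Hg_{i}}a_{g}\lambda_{g}$, whence $x=\sum_{i=1}^{n}x_{i}$ with $x_{i}$ supported on $Hg_{i}$. The essential point is that each piece inherits both commutator bounds: the intertwining identities of Lemma \ref{ConditionalExpectation-1} give
\[
[D_{A}\otimes1,x_{i}]=\mathbb{E}_{H}([D_{A}\otimes1,x]\lambda_{g_{i}^{-1}})\lambda_{g_{i}},\qquad[1\otimes M_{\ell},x_{i}]=\mathbb{E}_{H}([1\otimes M_{\ell},x]\lambda_{g_{i}^{-1}})\lambda_{g_{i}},
\]
and since $\mathbb{E}_{H}$ is contractive while the $\lambda_{g_{i}}$ are unitaries, this yields $\Vert[D_{A}\otimes1,x_{i}]\Vert\leq1$ and $\Vert[1\otimes M_{\ell},x_{i}]\Vert\leq1$, i.e. $x_{i}\in\mathcal{Q}^{\prime}_{Hg_{i}}$. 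Consequently the image of $\mathcal{Q}^{\prime}$ is contained in the image of $\mathcal{Q}^{\prime}_{Hg_{1}}\times\dots\times\mathcal{Q}^{\prime}_{Hg_{n}}$ under the addition map; as a finite product of totally bounded sets is totally bounded and the (uniformly continuous) addition map preserves this property on passing to the quotient, the image of $\mathcal{Q}^{\prime}$ is totally bounded, which by Proposition \ref{QuantumMetricSpaceCharacterization} gives the Lipschitz condition.

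I expect the only genuine obstacle to be ensuring that the coset pieces $x_{i}$ again satisfy both norm bounds: truncating an operator to a coset could a priori inflate the norm of either commutator, and it is precisely the delicate intertwining identities of Lemma \ref{ConditionalExpectation-1} together with the contractivity of $\mathbb{E}_{H}$ that rule this out. Everything after that --- the finite decomposition and the stability of total boundedness under finite sums --- is routine.
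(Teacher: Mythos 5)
Your proposal is correct and follows essentially the same route as the paper: both directions are handled identically, with the converse resting on the coset decomposition $x=\sum_i x_i$ and the intertwining identities plus contractivity of $\mathbb{E}_H$ from Lemma \ref{ConditionalExpectation-1} to show each piece stays in the unit balls of both commutator seminorms. The only cosmetic difference is that you phrase the conclusion via the addition map on a finite product rather than writing $\mathcal{Q}\subseteq\mathcal{Q}_{g_1}+\dots+\mathcal{Q}_{g_m}$ directly, which is the same argument.
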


\begin{proof} Set $\mathcal{Q}:=\left\{ x\in C_{c}(G,\mathcal{A})\mid\left\Vert [D_{A}\otimes1,x]\right\Vert \leq1\text{ and }\left\Vert [1\otimes M_{\ell},x]\right\Vert \leq1\right\} $,
and for $g\in G$, write $\mathcal{Q}_{g}$ for the set of all elements
$x=\sum_{h\in H}a_{h}\lambda_{hg}\in C_{c}(Hg,\mathcal{A})$ with
$(a_{h})_{h\in H}\subseteq\mathcal{A}$ satisfying $\left\Vert [D_{A}\otimes1,x]\right\Vert \leq1$
and $\left\Vert [1\otimes M_{\ell},x]\right\Vert \leq1$. \\

``$\Rightarrow$'' Assume that the triple $(C_{c}(G,\mathcal{A}),\mathcal{H}\oplus\mathcal{H},D)$
is a spectral metric space. By Proposition \ref{QuantumMetricSpaceCharacterization},
the set $\mathcal{Q}$ has totally bounded image in $(A\rtimes_{\alpha,r}G)/\mathbb{C}1$.
But $\mathcal{Q}_{g}$ is contained in $\mathcal{Q}$. It follows
that $\mathcal{Q}_{g}$ must also have totally bounded image in $(A\rtimes_{\alpha,r}G)/\mathbb{C}1$.
\\

``$\Leftarrow$'' Assume that $\mathcal{Q}_{g}$ has totally bounded
image in $(A\rtimes_{\alpha,r}G)/\mathbb{C}1$ for every $g\in G$
and let $g_{1},...,g_{m}\in G$ be elements with $G=\bigcup_{i=1}^{m}Hg_{i}$
and $Hg_{i}\neq Hg_{j}$ for $i\neq j$. For $x=\sum_{g\in G}a_{g}\lambda_{g}\in\mathcal{Q}$
with $(a_{g})_{g\in G}\subseteq\mathcal{A}$ and $i=1,...,m$, set
$x_{i}:=\sum_{h\in H}a_{hg_{i}}\lambda_{hg_{i}}$. Then $x=x_{1}+...+x_{m}$,
\begin{eqnarray*}
\left\Vert [1\otimes M_{\ell},x_{i}]\right\Vert  & = & \Vert\sum_{h\in H}(1\otimes\varphi_{hg_{i}})a_{hg_{i}}\lambda_{hg_{i}}\Vert\\
&=& \Vert[1\otimes M_{\ell},\mathbb{E}_{H}(x\lambda_{g_{i}^{-1}})\lambda_{g_{i}}]\Vert \\
 & = & \Vert\mathbb{E}_{H}([1\otimes M_{\ell},x]\lambda_{g_{i}^{-1}})\lambda_{g_{i}}\Vert\\
 & \leq & \left\Vert [1\otimes M_{\ell},x]\right\Vert \\
 & \leq & 1,
\end{eqnarray*}
where $\mathbb{E}_{H}$ is the contractive linear map appearing in
Lemma \ref{ConditionalExpectation-1}, and similarly
\[
\left\Vert [D_{A}\otimes1,x_{i}]\right\Vert =\Vert\mathbb{E}_{H}([D_{A}\otimes1,x]\lambda_{g_{i}^{-1}})\lambda_{g_{i}}\Vert\leq\Vert[D_{A}\otimes1,x]\Vert\leq1.
\]
It follows that $\mathcal{Q}\subseteq\mathcal{Q}_{g_{1}}+...+\mathcal{Q}_{g_{m}}$,
and hence, since the $\mathcal{Q}_{g_{i}}$ are assumed to have totally
bounded image in $(A\rtimes_{\alpha,r}G)/\mathbb{C}1$, the set $\mathcal{Q}$
also has totally bounded image in $(A\rtimes_{\alpha,r}G)/\mathbb{C}1$.
With Proposition \ref{QuantumMetricSpaceCharacterization}, we deduce
that the triple $(C_{c}(G,\mathcal{A}),\mathcal{H}\oplus\mathcal{H},D)$
is a spectral metric space. \end{proof}

For a group $G$, we denote its (normal) \emph{commutator subgroup}
(or \emph{derived subgroup}) generated by all commutators $[g,h]:=g^{-1}h^{-1}gh$,
$g,h\in G$ by $[G,G]$. Its \emph{Abelianization} is the commutative
group $G/[G,G]$. If $G$ is finitely generated, then so is its Abelianization,
which can therefore be written as a direct product $T\times\mathbb{Z}^{m}$,
where $m\geq0$ is the rank of $G/[G,G]$ and where $T$ is its torsion
subgroup.

Recall that an \emph{invariant mean} of a discrete group $G$ is a
state on $\ell^{\infty}(G)\subseteq\mathcal{B}(\ell^{2}(G))$ that
is invariant under the canonical action of $G$. A group $G$ is \emph{amenable}
if it admits an invariant mean.

As it will be convenient in Section \ref{SeparatedGroups}, we formulate
the following lemma as well as Definition \ref{SeparatedDefinition}
for pseudo-length functions instead of just length functions. A \emph{pseudo-length
function} on a discrete group $G$ is a map $\ell:G\rightarrow\mathbb{R}_{+}$
satisfying $\ell(gh)\leq\ell(g)+\ell(h)$, $\ell(g^{-1})=\ell(g)$
for all $g,h\in G$ and $\ell(e)=0$. As before, associate bounded
operators $\varphi_{g}^{\ell}\in\ell^{\infty}(G)\subseteq\mathcal{B}(\ell^{2}(G))$,
$g\in G$ with $\ell$ by defining $\varphi_{g}^{\ell}\delta_{h}:=(\ell(h)-\ell(g^{-1}h))\delta_{h}$
for $h\in G$. It is easy to check that the 1-cocycle identity in
Lemma \ref{1-cocycle} holds for (not necessarily proper) pseudo-length
functions as well.

\begin{lemma} \label{InducedHomomorphism} Let $G$ be a finitely
generated discrete group equipped with a pseudo-length function $\ell$.
Denote the projection onto the torsion-free component of the Abelianization
$G/[G,G]$ of $G$ by $p_{G}$. Then every invariant mean $\mu:\ell^{\infty}(G)\rightarrow\mathbb{C}$
induces a well-defined group homomorphism $\widehat{\mu}_{\ell}:\text{im}(p_{G})\rightarrow\mathbb{R}$
via $p_{G}(g)\mapsto\mu(\varphi_{g}^{\ell})$. \end{lemma}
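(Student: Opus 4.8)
The plan is to show that the map $\phi:G\rightarrow\mathbb{R}$, $\phi(g):=\mu(\varphi_{g}^{\ell})$, is \emph{already} a group homomorphism on all of $G$, and that it then automatically descends through the Abelianization onto the torsion-free quotient $\text{im}(p_{G})$. Before doing anything else I would record that $\mu(\varphi_{g}^{\ell})$ is a well-defined real number: subadditivity and symmetry of $\ell$ give $|\varphi_{g}^{\ell}(h)|=|\ell(h)-\ell(g^{-1}h)|\leq\ell(g)$ for all $h\in G$, so each $\varphi_{g}^{\ell}$ is a bounded self-adjoint element of $\ell^{\infty}(G)$, on which the state $\mu$ takes a real value.

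The crux is a one-line computation. By the $1$-cocycle identity of Lemma \ref{1-cocycle} (valid for pseudo-length functions, as noted in the paragraph preceding the statement) we have $\varphi_{gh}^{\ell}=g.\varphi_{h}^{\ell}+\varphi_{g}^{\ell}$ for all $g,h\in G$. Applying the linear functional $\mu$ and using that $\mu$ is invariant under the left-translation action, i.e. $\mu(g.f)=\mu(f)$, yields
\[
\phi(gh)=\mu(\varphi_{gh}^{\ell})=\mu(g.\varphi_{h}^{\ell})+\mu(\varphi_{g}^{\ell})=\mu(\varphi_{h}^{\ell})+\mu(\varphi_{g}^{\ell})=\phi(g)+\phi(h).
\]
Thus $\phi$ is a homomorphism from $G$ into the additive group $(\mathbb{R},+)$.

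Finally I would pass to the quotient. Since $(\mathbb{R},+)$ is Abelian, $\phi$ factors through the Abelianization $G/[G,G]$; because $G$ is finitely generated this group is $T\times\mathbb{Z}^{m}$ with $T$ the finite torsion subgroup and $\text{im}(p_{G})\cong\mathbb{Z}^{m}$. As $T$ is torsion while $\mathbb{R}$ is torsion-free, the induced homomorphism on $G/[G,G]$ must vanish on $T$, so it descends to a homomorphism $\widehat{\mu}_{\ell}:\text{im}(p_{G})\rightarrow\mathbb{R}$ with $\widehat{\mu}_{\ell}(p_{G}(g))=\phi(g)=\mu(\varphi_{g}^{\ell})$. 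This simultaneously delivers well-definedness — if $p_{G}(g)=p_{G}(g')$ then $g(g')^{-1}$ has trivial image in $\mathbb{Z}^{m}$, hence maps into $T$, so $\phi(g)-\phi(g')=\phi(g(g')^{-1})=0$ — and the homomorphism property.

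I do not expect a genuine obstacle here: essentially all the content is packaged in Lemma \ref{1-cocycle} together with the invariance of $\mu$, which jointly convert the \emph{additive} cocycle relation into the homomorphism identity. The only points demanding a little care are verifying that each $\varphi_{g}^{\ell}$ indeed lies in $\ell^{\infty}(G)$ (so that $\mu$ may be evaluated and returns a real number) and correctly bookkeeping the two-step descent from $G$ through $G/[G,G]$ to its torsion-free quotient $\text{im}(p_{G})$.
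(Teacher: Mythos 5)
Your proof is correct and follows essentially the same route as the paper: boundedness and self-adjointness of $\varphi_{g}^{\ell}$, the $1$-cocycle identity of Lemma \ref{1-cocycle} combined with the invariance of $\mu$ to obtain a homomorphism $G\rightarrow\mathbb{R}$, and then the descent through the commutator and torsion subgroups. You merely spell out the steps that the paper's four-line proof leaves implicit.
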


\begin{proof} Note that $\varphi_{g}^{\ell}\in\ell^{\infty}(G)$
is self-adjoint for every $g\in G$. In combination with the 1-cocycle
identity, this implies that the map $G\rightarrow\mathbb{R}$, $g\mapsto\mu(\varphi_{g}^{\ell})$
is a well-defined group homomorphism. Every such homomorphism vanishes
on the commutator subgroup. The induced map on the Abelianization
must vanish on the torsion subgroup. This proves the statement. \end{proof}

The fundamental idea of our approach consists of showing that for
suitable groups and (pseudo-)length functions on them, the space of
all invariant means is sufficiently rich in the sense that it induces
many non-trivial group homomorphisms as in Lemma \ref{InducedHomomorphism}.
Let us therefore introduce the following notion.

\begin{definition} \label{SeparatedDefinition} Let $G$ be a finitely
generated discrete group equipped with a pseudo-length function $\ell$
and let $p_{G}$ be the projection onto the torsion-free component
of the Abelianization of $G$. We call $G$ \emph{separated with respect
to $\ell$} if 
\[
\text{Hom}(\text{im}(p_{G}),\mathbb{R})=\text{Span}\left\{ \widehat{\mu}_{\ell}\mid\mu\text{ invariant mean}\right\} .
\]
In this case, we also say that the pair $(G,\ell)$ is separated. \end{definition}

It is clear that every group that is separated with respect to a certain
length function has to be amenable.

For notational convenience, for subsets $S,T$ of a C$^{\ast}$-algebra
$A$ and $\varepsilon>0$, we write $S\subseteq_{\varepsilon}T$ if
for every $a\in S$, there exists $b\in T$ with $\left\Vert a-b\right\Vert <\varepsilon$.
For $\lambda>0$, we further denote the set of all elements $\lambda a$
with $a\in S$ by $\lambda S$.

\begin{proposition} \label{DiagonalApproximation} Let $G$ be a
finitely generated discrete group equipped with a proper length function
$\ell:G\rightarrow\mathbb{R}_{+}$. Assume that $G$ admits a finite
index subgroup $H$ that is separated with respect to the restriction
of $\ell$ to $H$ and let $A\subseteq\mathcal{B}(\mathcal{H}_{A})$
be a unital separable C$^{\ast}$-algebra on which $G$ acts. For
every $g\in G$, define 
\[
\mathcal{Q}_{1}^{g}:=\left\{ x\in C_{c}(Hg,A)\mid\left\Vert [1\otimes M_{\ell},x]\right\Vert \leq1\right\}.
\]
Then for every $\varepsilon>0$, there exists $\delta>0$ and finitely
many elements $g_{1},...,g_{n}\in G$ such that $\mathcal{Q}_{1}^{g}\subseteq_{\varepsilon}\delta\mathcal{Q}_{1}^{g}\cap C_{c}(K,A)$,
where $K:=\bigcup_{i=1}^{n}[H,H]g_{i}$.

Similarly, if $(\mathcal{A},\mathcal{H}_{A},D_{A})$ is a non-degenerate
spectral triple on $A$, $g\in G$ and 
\[
\mathcal{Q}_{2}^{g}:=\left\{ x\in C_{c}(Hg,\mathcal{A})\mid\left\Vert [1\otimes M_{\ell},x]\right\Vert \leq1,\left\Vert [D_{A}\otimes1,x]\right\Vert \leq1\right\} ,
\]
then for every $\varepsilon>0$, there exists $\delta>0$ and finitely
many elements $g_{1},...,g_{n}\in G$ such that $\mathcal{Q}_{2}^{g}\subseteq_{\varepsilon}\delta\mathcal{Q}_{2}^{g}\cap C_{c}(K,\mathcal{A})$,
where $K:=\bigcup_{i=1}^{n}[H,H]g_{i}$. \end{proposition}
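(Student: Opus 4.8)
The plan is to reduce the statement about operator norms of commutators $[1 \otimes M_\ell, x]$ to a statement about approximation of the cocycle maps $\varphi_g^\ell$ by their "average" behavior captured by invariant means. Recall from Lemma \ref{Commutator identity} that for $x = \sum_{h \in H} a_{hg} \lambda_{hg} \in C_c(Hg, A)$ we have $[1 \otimes M_\ell, x] = \sum_{h \in H} (1 \otimes \varphi_{hg}^\ell) a_{hg} \lambda_{hg}$, and the norm of this expression depends on the multiplication operators $\varphi_{hg}^\ell$ acting on $\ell^2(G)$. The central idea is that, thanks to the separation hypothesis on $(H, \ell|_H)$, the span of the homomorphisms $\widehat{\mu}_{\ell|_H}$ coming from invariant means exhausts $\mathrm{Hom}(\mathrm{im}(p_H), \mathbb{R})$; this richness should let me replace each $\varphi_{hg}^\ell$ by something that factors through the finite quotient $H/[H,H]$ up to a controlled error, at the cost of enlarging the support to finitely many cosets of $[H,H]$.

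First I would fix $g \in G$ and analyze how $\varphi_{hg}^\ell$ depends on $h \in H$. Using the 1-cocycle identity of Lemma \ref{1-cocycle}, $\varphi_{hg}^\ell = h.\varphi_g^\ell + \varphi_h^{\ell}$, so the $g$-dependence splits off and the essential object is $\varphi_h^\ell$ for $h \in H$, governed by the restricted pseudo-length $\ell|_H$. Next I would use the separation hypothesis to extract, for each generator direction of $\mathrm{im}(p_H) \cong \mathbb{Z}^m$, an invariant mean whose induced homomorphism detects that direction; averaging $\varphi_h^\ell$ against these means produces the stable (linear) part of the cocycle. The point is that an element $x$ with $\|[1 \otimes M_\ell, x]\| \le 1$ can be \emph{approximately} rewritten, after translating its coefficients by a bounded scalar factor $\delta$, as an element supported on finitely many $[H,H]$-cosets: elements of $H$ lying in the same $[H,H]$-coset have $\varphi_h^\ell$ values that are asymptotically equal (as seen by the means), so the contributions from an infinite coset can be collapsed onto finitely many representatives without blowing up the commutator norm beyond a factor of $\delta$.

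The second (spectral-triple) assertion for $\mathcal{Q}_2^g$ should follow by the same mechanism with one added bookkeeping ingredient: the bound $\|[D_A \otimes 1, x]\| \le 1$ must be preserved under the same collapsing procedure. Here metric equicontinuity of the action (bounding $\sup_{g} L_{D_A}(g.a)$) ensures that moving coefficients between elements of a fixed $[H,H]$-coset, which differ by the $G$-action, does not destroy the $D_A$-commutator bound, so the same $\delta$ and the same finite set $g_1, \dots, g_n$ work for both conditions simultaneously. Concretely I would verify that $\mathbb{E}_{[H,H]}$-type truncations (compare Lemma \ref{ConditionalExpectation-1}) are compatible with both $[D_A \otimes 1, \cdot]$ and $[1 \otimes M_\ell, \cdot]$, so that projecting $x$ onto the enlarged-but-finite support set $C_c(K, \mathcal{A})$ contracts both commutator norms.

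The hard part will be making the passage from "invariant means detect the linear part of the cocycle" to "the operator norm of the commutator controls how far $x$ must be supported" fully quantitative and uniform. An invariant mean is an asymptotic/averaging device, whereas the commutator norm is a supremum over $\ell^2(G)$; bridging these requires showing that if $x$ is supported on arbitrarily far-out elements of a single $[H,H]$-coset, then the near-constancy of $\varphi_h^\ell$ on that coset (guaranteed by separation) forces an almost-isometry between $x$ and its collapsed version, so that staying within the norm ball $\|[1 \otimes M_\ell, x]\| \le 1$ genuinely confines the essential support to finitely many cosets up to error $\varepsilon$. Extracting the explicit $\delta$ and the finite exhausting set $g_1, \dots, g_n$ from the finitely generated structure of $\mathrm{im}(p_H)$, while keeping the estimates uniform in $g$, is where the real technical work lies.
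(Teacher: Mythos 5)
Your outline correctly identifies the ingredients that get the argument started (the commutator identity of Lemma \ref{Commutator identity}, the cocycle identity of Lemma \ref{1-cocycle}, and the fact that separation of $(H,\ell|_H)$ produces linear combinations $\phi_1,\dots,\phi_m$ of invariant means with $\phi_i(\varphi_h^{\ell|_H})=p_i\circ p_H(h)$), but it stops exactly at the point where the actual proof lives, and the mechanism you describe for finishing is not the right one. The paper does not ``collapse the contributions from an infinite $[H,H]$-coset onto finitely many representatives''; nothing is moved or identified within a coset. Instead, the means are promoted (via Proposition \ref{Isomorphism-1} and Fell's absorption principle) to bounded maps $P_i:\mathcal{C}(A,H,\ell)\rightarrow A\rtimes_{\alpha|_H,r}H$, and applying $P_i$ to $[1\otimes M_\ell,x]\lambda_{g^{-1}}$ converts the hypothesis $\Vert[1\otimes M_\ell,x]\Vert\leq 1$ into a bound on $\Vert[1\otimes M_{p_i\circ p_H},x\lambda_{g^{-1}}]+c\,x\lambda_{g^{-1}}\Vert$ for a scalar $c$. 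The step you flag as ``the hard part'' --- bridging the averaging nature of an invariant mean with the operator-norm supremum --- is then carried out by Lemma \ref{OzawaRieffel}, an Ozawa--Rieffel-type Fourier multiplier estimate (using the kernel with Fourier coefficients $(k+L)^{-1}$ for $|k|>N$) which shows that the tail $\sum_{h:|p_i\circ p_H(h)|>N_i}a_h\lambda_{hg}$ has norm at most $m^{-1}\varepsilon$ for an $N_i$ independent of $x$. Iterating over the $m$ coordinates truncates the support of $x$, up to total error $\varepsilon$, to a bounded box in $\mathbb{Z}^m$ pulled back through $p_H$, i.e.\ to finitely many $[H,H]$-cosets, with the commutator norms of the truncation controlled by $\delta=(2N_1+1)\cdots(2N_m+1)$ via the maps $\mathbb{E}_i$ of Lemma \ref{ConditionalExpectation-1}. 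Without this harmonic-analysis lemma (or a substitute for it) your proposal has no quantitative content and does not constitute a proof.

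Two smaller points. First, your appeal to metric equicontinuity for preserving the $D_A$-commutator bound is unnecessary and not available: the proposition does not assume metric equicontinuity, and the bound $\Vert[D_A\otimes 1,x_i]\Vert\leq\delta$ follows purely from the intertwining property of the truncation maps $\mathbb{E}_i$ with $[D_A\otimes 1,\cdot]$ established in Lemma \ref{ConditionalExpectation-1}. Second, the claim that $\varphi_h^\ell$ is ``near-constant'' on a $[H,H]$-coset is not what separation gives you and is not used; what is used is only that the induced homomorphisms span $\text{Hom}(\text{im}(p_G),\mathbb{R})$, so that every coordinate of $p_H$ is realized by a bounded functional on $\mathcal{C}(A,H,\ell)$.
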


Roughly speaking, Proposition \ref{DiagonalApproximation} states
that all elements $x\in C_{c}(Hg,A)$, $g\in G$ with $\left\Vert [1\otimes M_{\ell},x]\right\Vert \leq1$
(and $\left\Vert [D_{A}\otimes1,x]\right\Vert \leq1$) can suitably
be approximated by ones that are in some sense almost supported on
the commutator subgroup of $H$. This has important implications.
The proof of the proposition relies on the following variation of
the result in \cite[Section 2]{OzawaRieffel}.

\begin{lemma} \label{OzawaRieffel} Let $G$ be a finitely generated
discrete group equipped with a proper length function $\ell:G\rightarrow\mathbb{R}_{+}$,
let $\alpha:G\rightarrow\text{Aut}(A)$ an action of $G$ on a unital
separable C$^{\ast}$-algebra $A\subseteq\mathcal{B}(\mathcal{H}_{A})$,
and let $L\in\mathbb{R}$. For a non-trivial group homomorphism $\phi:G\rightarrow\mathbb{Z}$,
define an unbounded operator $M_{\phi}$ on $\ell^{2}(G)$ via $M_{\phi}\delta_{g}:=\phi(g)\delta_{g}$
for $g\in G$. Then for every $x=\sum_{g\in G}a_{g}\lambda_{g}\in C_{c}(G,A)$
with $(a_{g})_{g\in G}\subseteq A$, the operator $[1\otimes M_{\phi},x]$
has dense domain and is bounded. Further, 
\[
\Vert\sum_{g\in G:\left|\phi(g)\right|>N}a_{g}\lambda_{g}\Vert\leq\left(\sum_{k\in\mathbb{Z}:\left|k\right|>N}\frac{1}{(k+L)^{2}}\right)^{1/2}\left\Vert [1\otimes M_{\phi},x] + Lx \right\Vert 
\]
for every $N\in\mathbb{N}$ with $N\geq\left|L\right|$. \end{lemma}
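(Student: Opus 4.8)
The plan is to exploit the $\mathbb{Z}$-grading of the crossed product induced by the homomorphism $\phi$. The first observation is that, \emph{because $\phi$ is a homomorphism}, the cocycle attached to it is constant: running the computation of Lemma \ref{Commutator identity} with $\ell$ replaced by $\phi$ gives $\varphi_g^{\phi}(h)=\phi(h)-\phi(g^{-1}h)=\phi(g)$ for all $h$, whence for $x=\sum_{g}a_{g}\lambda_{g}\in C_{c}(G,A)$ one has $[1\otimes M_{\phi},x]=\sum_{g}\phi(g)\,a_{g}\lambda_{g}$. This is a finite sum of bounded operators, so the commutator is bounded; it is defined on the domain of $1\otimes M_{\phi}$, which contains the finitely supported tensors (a dense set) and is preserved by each $\lambda_{g}$ since $\lambda_g$ shifts $\phi$-values by the constant $\phi(g)$. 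Hence $[1\otimes M_{\phi},x]$ has dense domain, and $[1\otimes M_{\phi},x]+Lx=\sum_{g}(\phi(g)+L)\,a_{g}\lambda_{g}$.

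Next I would introduce the dual circle action. Define unitaries $U_{t}\in\mathcal{B}(\ell^{2}(G))$ by $U_{t}\delta_{g}:=e^{it\phi(g)}\delta_{g}$ and set $\gamma_{t}:=\mathrm{Ad}(1\otimes U_{t})$. A short computation as in the proof of Proposition \ref{Isomorphism-1} shows $\gamma_{t}(a_{g}\lambda_{g})=e^{it\phi(g)}a_{g}\lambda_{g}$, so that $\gamma_{t}$ restricts to a $2\pi$-periodic, point-norm continuous one-parameter group of $\ast$-automorphisms of $A\rtimes_{\alpha,r}G$ that is implemented by unitaries; in particular each $\gamma_{t}$ is isometric. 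For $k\in\mathbb{Z}$ write $x_{k}:=\sum_{\phi(g)=k}a_{g}\lambda_{g}$ for the degree-$k$ component, so that $x=\sum_{k}x_{k}$ is a finite sum and $x_{k}=\frac{1}{2\pi}\int_{0}^{2\pi}e^{-ikt}\gamma_{t}(x)\,dt$. Putting $y:=[1\otimes M_{\phi},x]+Lx$, its degree-$k$ component is $y_{k}=(k+L)x_{k}$; since $N\geq|L|$ forces $|k+L|\geq|k|-|L|>0$ for every $|k|>N$, we may recover $\sum_{g:\,|\phi(g)|>N}a_{g}\lambda_{g}=\sum_{|k|>N}x_{k}=\sum_{|k|>N}\frac{1}{k+L}\,y_{k}$.

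The heart of the argument is then to control this $\phi$-degree multiplier by an $\ell^{2}$-norm. I would write $\sum_{|k|>N}\frac{1}{k+L}y_{k}=\frac{1}{2\pi}\int_{0}^{2\pi}F(t)\,\gamma_{t}(y)\,dt$, where $F(t):=\sum_{|k|>N}\frac{e^{-ikt}}{k+L}$ lies in $L^{2}$ of the circle with $\|F\|_{L^{2}(dt/2\pi)}^{2}=\sum_{|k|>N}\frac{1}{(k+L)^{2}}$ by Parseval. (The integral identity follows by integrating the finite sum $\gamma_{t}(y)=\sum_{j}e^{ijt}y_{j}$ term by term against $F$, which reproduces exactly the coefficients $\frac{1}{j+L}$ for $|j|>N$.) Using that each $\gamma_{t}$ is isometric and then Cauchy--Schwarz for the normalized measure $dt/2\pi$,
\[
\Big\|\sum_{|k|>N}\tfrac{1}{k+L}\,y_{k}\Big\|\leq\frac{1}{2\pi}\int_{0}^{2\pi}|F(t)|\,\|\gamma_{t}(y)\|\,dt=\|y\|\cdot\frac{1}{2\pi}\int_{0}^{2\pi}|F(t)|\,dt\leq\|y\|\,\|F\|_{L^{2}(dt/2\pi)},
\]
which is precisely the asserted inequality once $\|F\|_{L^2(dt/2\pi)}$ and $y$ are substituted.

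The main point to be careful about is the passage through the Fourier multiplier: one must know that the degree-$k$ projection $x\mapsto x_{k}$ is contractive — which is exactly what the \emph{unitary} implementation of $\gamma_{t}$ delivers — and that integrating the $L^{2}$ kernel $F$ against the norm-continuous bounded path $t\mapsto\gamma_{t}(y)$ is legitimate (a Bochner integral, or, since $y$ is a finite sum, an honest finite computation). Everything else is bookkeeping showing that the $\phi$-degree decomposition is compatible with $[1\otimes M_{\phi},\,\cdot\,]$, which is immediate from the constancy of $\varphi_{g}^{\phi}$. I do not expect a genuine obstacle here; the only real input beyond formal manipulation is the isometric dual action, and this is the variation of \cite[Section 2]{OzawaRieffel} that the statement alludes to.
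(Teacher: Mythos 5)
Your proposal is correct and follows essentially the same route as the paper: both establish $[1\otimes M_{\phi},x]=\sum_{g}\phi(g)a_{g}\lambda_{g}$ from the cocycle computation, and both recover $\sum_{|\phi(g)|>N}a_{g}\lambda_{g}$ by averaging the conjugation $y\mapsto (1\otimes U_{t})y(1\otimes U_{t})^{\ast}$ of $y=[1\otimes M_{\phi},x]+Lx$ against the $L^{2}$ kernel with Fourier coefficients $(k+L)^{-1}$ for $|k|>N$, finishing with the triangle inequality, Cauchy--Schwarz and Parseval. Your explicit remark that $N\geq|L|$ and $|k|>N$ force $k+L\neq 0$ is a detail the paper leaves implicit, but otherwise the two arguments coincide.
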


\begin{proof} It is clear that $[1\otimes M_{\phi},x]$ has dense
domain and by the same computation as in the proof of Lemma \ref{Commutator identity},
\[
[1\otimes M_{\phi},x]=\sum_{g\in G}\phi(g)a_{g}\lambda_{g},
\]
so $[1\otimes M_{\phi},x]$ is bounded. To prove the inequality, define
a strong operator-continuous 1-parameter family $\mathbb{R}\rightarrow\mathcal{B}(\mathcal{H}_{A}\otimes\ell^{2}(G))$,
$t\mapsto U_{t}$ via $U_{t}(\xi\otimes\delta_{g}):=e^{it\phi(g)}(\xi\otimes\delta_{g})$
for $\xi\in\mathcal{H}_{A}$, $g\in G$. For fixed $N\in\mathbb{N}$
with $N\geq\left|L\right|$, we obtain a bounded linear map on $\mathcal{B}(\mathcal{H}_{A}\otimes\ell^{2}(G))$
via $\kappa(x)\eta:=(2\pi)^{-1}\int_{0}^{2\pi}f_{N}(t)U_{t}xU_{t}^{\ast}\eta dt$
for $\eta\in\mathcal{H}_{A}\otimes\ell^{2}(G)$ with the $L^{2}$-function
$f_{N}(t):=\sum_{k\in\mathbb{Z}:\left|k\right|>N}(k+L)^{-1}e^{-ikt}$
with prescribed Fourier coefficients $(k+L)^{-1}$ for $|k|>N$.
Then, for $x=\sum_{g\in G}a_{g}\lambda_{g}\in C_{c}(G,A)$ with $(a_{g})_{g\in G}\subseteq A$
and $\xi\in\mathcal{H}_{A}$, $h\in G$, 
\begin{eqnarray*}
\left[\kappa([1\otimes M_{\phi},x]+Lx)\right](\xi\otimes\delta_{h}) & = & \sum_{g\in G}\frac{\phi(g)+L}{2\pi}\left\{ \int_{0}^{2\pi}e^{it\phi(g)}f_{N}(t)dt\right\} \left(((gh)^{-1}.a_{g})\xi\otimes\delta_{gh}\right)\\
 & = & \sum_{g\in G:\left|\phi(g)\right|>N}\left(((gh)_{.}^{-1}a_{g})\xi\otimes\delta_{gh}\right).
\end{eqnarray*}
We get that $\kappa([1\otimes M_{\phi},x]+Lx)=\sum_{g\in G:\left|\phi(g)\right|>N}a_{g}\lambda_{g}$,
and hence, 
\begin{eqnarray*}
 &  & \Vert\sum_{g\in G:\left|\phi(g)\right|>N}a_{g}\lambda_{g}\Vert=\left\Vert \kappa([1\otimes M_{\phi},x]+Lx)\right\Vert \leq\frac{\left\Vert [1\otimes M_{\phi},x]+Lx\right\Vert }{2\pi}\int_{0}^{2\pi}\left|f_{N}(t)\right|dt\\
 & \leq & \frac{\left\Vert [1\otimes M_{\phi},x]+Lx\right\Vert }{\sqrt{2\pi}}\left(\int_{0}^{2\pi}\left|f_{N}(t)\right|^{2}dt\right)^{1/2}=\left(\sum_{k\in\mathbb{Z}:\left|k\right|>N}\frac{1}{(k+L)^{2}}\right)^{1/2}\left\Vert [1\otimes M_{\phi},x] + Lx \right\Vert .
\end{eqnarray*}
\end{proof}

We are now ready to prove Proposition \ref{DiagonalApproximation}.
As mentioned earlier, Rieffel's approach in \cite{Rieffel02} relies
on the construction of sufficiently many fixed points in the horofunction
boundaries of $\mathbb{Z}^{m}$, $m\in\mathbb{N}$. These fixed points
induce conditional expectations from the crossed product C$^{\ast}$-algebra
associated with the horofunction compactification onto the group C$^{\ast}$-algebra
$C_{r}^{\ast}(\mathbb{Z}^{m})$. Similarly, in the proof of Proposition
\ref{DiagonalApproximation}, we will make use of the assumption that
$(G,\ell)$ is separated to construct suitable maps onto the restricted
crossed product C$^{\ast}$-algebra $A\rtimes_{\alpha|_{H},r}H$.

\begin{proof}[{Proof of Proposition \ref{DiagonalApproximation}}]
We only prove the second statement of Proposition \ref{DiagonalApproximation}
since the first one follows similarly. So assume that $(\mathcal{A},\mathcal{H}_{A},D_{A})$
is a non-degenerate odd spectral triple on $A$, $g\in G$, pick $x=\sum_{h\in H}a_{h}\lambda_{hg}\in C_{c}(Hg,A)$
with $(a_{h})_{h\in H}\subseteq\mathcal{A}$, $\left\Vert [1\otimes M_{\ell},x]\right\Vert \leq1$,
$\left\Vert [D_{A}\otimes1,x]\right\Vert \leq1$, and fix $\varepsilon>0$.
As before, let $p_{H}:H\twoheadrightarrow\mathbb{Z}^{m}$ be the projection
onto the torsion-free component of the Abelianization of $H$ (i.e.,
$m$ is the rank of the finitely generated Abelian group $H/[H,H]$).
By our assumption, $H$ is separated with respect to the restriction
$\ell|_{H}$. We can therefore find linear combinations $\phi_{1},...,\phi_{m}$
of invariant means on $\ell^{\infty}(H)$ such that $\phi_{i}(\varphi_{h}^{\ell|_{H}})=p_{i}\circ p_{H}(h)$,
for every $h\in H$, $1\leq i\leq m$ where $p_{i}:\mathbb{Z}^{m}\rightarrow\mathbb{Z}$
is the projection onto the $i$-th component of $\mathbb{Z}^{m}$.
These functionals induce maps $\ell^{\infty}(G)\rtimes_{\beta|_{H},r}H\rightarrow C_{r}^{\ast}(H)$
via $f\lambda_{h}\mapsto\phi_{i}(f|_{H})\lambda_{h}$ for $f\in\ell^{\infty}(G)$,
$h\in H$ and composition with the isomorphism from Proposition \ref{Isomorphism-1},
and an application of Fell's absorption principle (see \cite[Proposition 4.1.7]{BrownOzawa})
leads to bounded maps $P_{i}:\mathcal{C}(A,H,\ell)\rightarrow A\rtimes_{\alpha|_H,r}H$
via $P_{i}(a(1\otimes f)\lambda_{h}):=\phi_{i}(f|_{H})a\lambda_{h}$
for $a\in A$, $f\in\mathcal{G}(G,\ell)$, $h\in H$. Here, $\mathcal{C}(A,H,\ell)$
is the C$^{\ast}$-subalgebra of $\mathcal{B}(\mathcal{H})$ with
$\mathcal{H}:=\mathcal{H}_{A}\otimes\ell^{2}(G)$ generated by $A$,
$\mathbb{C}1\otimes\mathcal{G}(G,\ell)$ and $C_{r}^{\ast}(H)$. For every $i$, write $\mathbb{E}_{i}$ for
the contractive linear map on $\mathcal{B}(\mathcal{H})$ associated with the subgroup $\text{ker}(p_i \circ p_H) \leq G$ as in Lemma \ref{ConditionalExpectation-1}.

We proceed inductively. Define $L:=\max\{\left\Vert P_{1}\right\Vert ,...,\left\Vert P_{m}\right\Vert \}$
and note that $\varphi_{h}^{\ell|_{H}}=\varphi_{h}^{\ell}|_{H}$ for
every $h\in H$. By applying $P_{1}$ to $[1\otimes M_{\ell},x]\lambda_{g^{-1}}\in\mathcal{C}(A,H,\ell)$
and by using the identity in Lemma \ref{Commutator identity}, we
find 
\begin{eqnarray}
\nonumber
\Vert\phi_{1}(\varphi_{g}^{\ell}|_{H})x\lambda_{g^{-1}}+[1\otimes M_{p_{1}\circ p_{H}},x\lambda_{g^{-1}}]\Vert &=&  \Vert\sum_{h\in H}\left\{ \phi_{1}(\varphi_{g}^{\ell}|_{H})+p_{1}\circ p_{H}(h)\right\} a_{h}\lambda_{h}\Vert \\
\nonumber
&=& \Vert\sum_{h\in H}\phi_{1}(\varphi_{hg}^{\ell}|_{H})a_{h}\lambda_{h}\Vert \\
\nonumber
&=&   \Vert P_{1}([1\otimes M_{\ell},x]\lambda_{g^{-1}})\lambda_{g}\Vert \\
\nonumber
&\leq&  L,
\end{eqnarray}
In combination with Lemma \ref{OzawaRieffel} (where the constant is taken to be $\phi_{1}(\varphi_{g}^{\ell}|_{H})$), this implies that there
exists $N_{1}\in\mathbb{N}$ (which is independent of $x\in\mathcal{Q}_{2}^{g}$)
with 
\[
\Vert\sum_{h\in H:\left|p_{1}\circ p_{H}(h)\right|>N_{1}}a_{h}\lambda_{hg}\Vert\leq\Vert\sum_{h\in H:\left|p_{1}\circ p_{H}(h)\right|>N_{1}}a_{h}\lambda_{h}\Vert\leq m^{-1}\varepsilon.
\]
For every $-N_{1}\leq i\leq N_{1}$, choose $h_{i}\in H$ with $p_{1}\circ p_{H}(h_{i})=i$
and define an element in the crossed product via $x_{1}:=\sum_{h\in H:\left|p_{1}\circ p_{H}(h)\right|\leq N_{1}}a_{h}\lambda_{hg}\in A\rtimes_{\alpha,r}G$.
Then $\left\Vert x-x_{1}\right\Vert \leq m^{-1}\varepsilon$, 
\begin{eqnarray*}
\Vert[1\otimes M_{\ell},x_{1}]\Vert & = & \Vert\sum_{h\in G:\left|p_{1}\circ p_{H}(h)\right|\leq N_{1}}(1\otimes\varphi_{hg}^{\ell})a_{h}\lambda_{hg}\Vert\\
 & = & \Vert\sum_{-N_{1}\leq i\leq N_{1}}\sum_{h\in\ker(p_{1}\circ p_{H})}(1\otimes\varphi_{hh_{i}g}^{\ell})a_{hh_{i}}\lambda_{hh_{i}g}\Vert\\
&=& \Vert\sum_{-N_{1}\leq i\leq N_{1}}\mathbb{E}_{1}([1\otimes M_{\ell},x\lambda_{(h_{i}g)^{-1}}]\lambda_{h_{i}g})\Vert \\
 & = & \Vert\sum_{-N_{1}\leq i\leq N_{1}}\mathbb{E}_{1}([1\otimes M_{\ell},x]\lambda_{(h_{i}g)^{-1}})\lambda_{h_{i}g}\Vert\\
 & \leq & 2N_{1}+1,
\end{eqnarray*}
and similarly,
\[
\Vert[D_{A}\otimes1,x_{1}]\Vert=\Vert\sum_{-N_{1}\leq i\leq N_{1}}\mathbb{E}_{1}([D_{A}\otimes1,x]\lambda_{(h_{i}g)^{-1}})\lambda_{h_{i}g}\Vert\leq2N_{1}+1.
\]
In the same way, we can now apply $P_{2}$ to $[1\otimes M_{\ell},x_{1}]\lambda_{g^{-1}}$
and invoke Lemma \ref{OzawaRieffel} again to find $N_{2}\in\mathbb{N}$
with $\left\Vert x_{1}-x_{2}\right\Vert \leq m^{-1}\varepsilon$,
$\left\Vert [1\otimes M_{\ell},x_{2}]\right\Vert \leq(2N_{1}+1)(2N_{2}+1)$
and $\left\Vert [D_{A}\otimes1,x_{1}]\right\Vert \leq(2N_{1}+1)(2N_{2}+1)$,
where $x_{2}:=\sum_{h\in H:\left|p_{1}\circ p_{H}(h)\right|\leq N_{1},\left|p_{2}\circ p_{H}(h)\right|\leq N_{2}}a_{h}\lambda_{hg}\in A\rtimes_{\alpha,r}G$.
Performing these steps repeatedly leads to a sequence of natural numbers
$N_{1},...,N_{m}\in\mathbb{N}$ and elements $x_{1},...,x_{m}\in A\rtimes_{\alpha,r}G$
given by 
\[
x_{i}:=\sum_{h\in H:\left|p_{1}\circ p_{H}(h)\right|\leq N_{1},...,\left|p_{i}\circ p_{H}(h)\right|\leq N_{i}}a_{h}\lambda_{hg}
\]
for which $\left\Vert x_{i}-x_{i+1}\right\Vert <m^{-1}\varepsilon$,
$\left\Vert [1\otimes M_{\ell},x_{i}]\right\Vert \leq(2N_{1}+1)...(2N_{i}+1)$
and $\left\Vert [D_{A}\otimes1,x_{i}]\right\Vert \leq(2N_{1}+1)...(2N_{i}+1)$.
For $i=m$, we in particular have 
\[
\left\Vert x-x_{m}\right\Vert \leq\left\Vert x-x_{1}\right\Vert +\left\Vert x_{1}-x_{2}\right\Vert +...+\left\Vert x_{m-1}-x_{m}\right\Vert <\varepsilon.
\]
Set $\widetilde{N}:=\max\{N_{1},...,N_{m}\}$ and note that $p_{H}(\text{supp}(x_{m}\lambda_{g^{-1}}))$
is contained in the $\widetilde{N}$-ball of $\mathbb{Z}^{m}$ with
respect to the restriction of the supremum norm on $\mathbb{R}^{m}$
to $\mathbb{Z}^{m}$. It follows that there exist elements $g_{1},...,g_{n}\in G$
with $\text{supp}(x_{m})\subseteq K:=\bigcup_{i=1}^{n}[H,H]g_{i}.$
We therefore get that $\mathcal{Q}_{2}^{g}\subseteq_{\varepsilon}(2N_{1}+1)...(2N_{m}+1)\mathcal{Q}_{2}^{g}\cap C_{c}(K,\mathcal{A})$,
which finishes the proof. \end{proof}

\begin{theorem} \label{MainTheorem} Let $(\mathcal{A},\mathcal{H}_{A},D_{A})$
be a non-degenerate odd spectral triple on a separable unital C$^{\ast}$-algebra
$A$ and assume that the induced Lipschitz semi-norm $L_{D_{A}}(a):=\left\Vert [D_{A},a]\right\Vert ,a\in\mathcal{A}$
defines a compact quantum metric space $(A,L_{D_{A}})$. Let further
$\alpha:G\rightarrow\text{Aut}(A)$ be a metrically equicontinuous
action of a finitely generated discrete group $G$ equipped with a
proper length function $\ell:G\rightarrow\mathbb{R}_{+}$ and assume
that there exists a finite index subgroup $H$ of $G$ that is separated
with respect to the restricted length function $\ell|_{H}$. As before,
define $\mathcal{H}:=\mathcal{H}_{A}\otimes\ell^{2}(G)$ and $D$
as in Subsection \ref{subsec:Crossed-product-C-algebras}. Then the
even spectral triple $(C_{c}(G,\mathcal{A}),\mathcal{H}\oplus\mathcal{H},D)$
is a spectral metric space if and only if for every $g\in G$,
the set of all elements $x=\sum_{h\in[H,H]}a_{h}\lambda_{hg}\in C_{c}(G,\mathcal{A})$
with $(a_{h})_{h\in[H,H]}\subseteq\mathcal{A}$ satisfying $\left\Vert [D_{A}\otimes1,x]\right\Vert \leq1$
and $\left\Vert [1\otimes M_{\ell},x]\right\Vert \leq1$ has totally
bounded image in $(A\rtimes_{\alpha,r}G)/\mathbb{C}1$.

In particular, if $[H,H]$ is finite, then $(C_{c}(G,\mathcal{A}),\mathcal{H}\oplus\mathcal{H},D)$
is a spectral metric space. \end{theorem}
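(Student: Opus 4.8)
The plan is to read off the stated equivalence from Lemma \ref{FiniteIndex} combined with Proposition \ref{DiagonalApproximation}, and then to obtain the final ``in particular'' clause from the equivalence by a finite-dimensionality argument. For the equivalence, Lemma \ref{FiniteIndex} reduces the Lipschitz condition to the statement that for every $g\in G$ the set $\mathcal{Q}_g=\{x\in C_c(Hg,\mathcal{A})\mid\|[D_A\otimes1,x]\|\leq1,\ \|[1\otimes M_\ell,x]\|\leq1\}$ has totally bounded image in $(A\rtimes_{\alpha,r}G)/\mathbb{C}1$. The forward implication is then immediate: elements supported on $[H,H]g\subseteq Hg$ form a subset of $\mathcal{Q}_g$, and a subset of a set with totally bounded image again has totally bounded image. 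For the reverse implication I would apply the second part of Proposition \ref{DiagonalApproximation} (with $\mathcal{Q}_2^g=\mathcal{Q}_g$): given $\varepsilon>0$ it yields $\delta>0$ and $g_1,\dots,g_n$ with $\mathcal{Q}_g\subseteq_{\varepsilon}\delta\mathcal{Q}_g\cap C_c(K,\mathcal{A})$ for $K=\bigcup_i[H,H]g_i$. Since each $[H,H]g_i$ lies in a single right $H$-coset, only the $g_i$ with $Hg_i=Hg$ contribute, so every element of $\mathcal{Q}_g\cap C_c(K,\mathcal{A})$ is supported on finitely many $[H,H]$-cosets inside $Hg$ and decomposes, via the maps $\mathbb{E}_{[H,H]}(x\lambda_{g_i^{-1}})\lambda_{g_i}$ of Lemma \ref{ConditionalExpectation-1}, into pieces supported on single cosets $[H,H]g_i$; the commutator identities of that lemma, together with contractivity of $\mathbb{E}_{[H,H]}$, keep both commutator norms $\leq1$. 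Thus $\mathcal{Q}_g\cap C_c(K,\mathcal{A})$ is a finite sum of sets of the type occurring in the hypothesis, hence has totally bounded image; rescaling by $\delta$ and the $\varepsilon$-approximation transfer this to $\mathcal{Q}_g$, and Lemma \ref{FiniteIndex} finishes.

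For the ``in particular'' clause, assume $[H,H]$ is finite and fix $g\in G$; by the equivalence it suffices to show that the set $\mathcal{R}_g$ of all $x=\sum_{h\in[H,H]}a_h\lambda_{hg}$ with $\|[D_A\otimes1,x]\|\leq1$ and $\|[1\otimes M_\ell,x]\|\leq1$ has totally bounded image in the quotient. Applying the conditional expectation onto the trivial subgroup together with the commutator identity of Lemma \ref{ConditionalExpectation-1} gives $\|[D_A\otimes1,a_h\lambda_{hg}]\|\leq\|[D_A\otimes1,x]\|\leq1$, and since $[D_A\otimes1,\widetilde{\pi}(a_h)]$ is block-diagonal with blocks $[D_A,\pi(k^{-1}.a_h)]$ one has $\|[D_A\otimes1,\widetilde{\pi}(a_h)]\|=\sup_{k}L_{D_A}(k.a_h)\geq L_{D_A}(a_h)$, so each coefficient obeys $L_{D_A}(a_h)\leq1$. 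Because $(A,L_{D_A})$ is a compact quantum metric space, $\{a\mid L_{D_A}(a)\leq1\}$ has totally bounded image in $A/\mathbb{C}1$; precomposing the bounded map $A/\mathbb{C}1\to A,\ [a]\mapsto a-\tau(a)1$ (for a fixed state $\tau$) with the quotient map shows that $\{a_h-\tau(a_h)1\mid L_{D_A}(a_h)\leq1\}$ is norm-bounded and totally bounded in $A$. I would then split $x=x_0+s$ with $x_0=\sum_h(a_h-\tau(a_h)1)\lambda_{hg}$ and scalar part $s=\sum_h\tau(a_h)\lambda_{hg}\in C_r^\ast(G)$. As $[H,H]$ is finite, $x_0$ is a finite sum whose coefficients range over one fixed totally bounded subset of $A$, and the map $(b_h)_h\mapsto\sum_h b_h\lambda_{hg}$ is Lipschitz, so $\{x_0\}$ is totally bounded in $A\rtimes_{\alpha,r}G$.

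The main obstacle is the scalar part $s$: the scalars $\tau(a_h)$ are not controlled by $L_{D_A}(a_h)\leq1$ and may be arbitrarily large, so it is precisely the passage to the quotient by $\mathbb{C}1$ and the finiteness of $[H,H]$ that must save the argument. The operators $\lambda_{hg}$, $h\in[H,H]$, span a finite-dimensional subspace $V_g\subseteq C_r^\ast(G)$, and on $V_g$ the map $s\mapsto[1\otimes M_\ell,s]=\sum_h\tau(a_h)(1\otimes\varphi_{hg}^\ell)\lambda_{hg}$ (Lemma \ref{Commutator identity}) has kernel exactly $\mathbb{C}1\cap V_g$: the function $\varphi_{hg}^\ell$ vanishes identically if and only if $hg=e$, since $\varphi_{hg}^\ell(e)=-\ell(hg)$, so the only term this commutator annihilates is the one along $\lambda_e$. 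Hence $s\mapsto\|[1\otimes M_\ell,s]\|$ descends to a genuine norm on the finite-dimensional quotient $V_g/(\mathbb{C}1\cap V_g)$, where all norms are equivalent. Bounding $\|[1\otimes M_\ell,s]\|\leq\|[1\otimes M_\ell,x]\|+\|[1\otimes M_\ell,x_0]\|\leq 1+\#[H,H]\cdot(\max_h\ell(hg))\cdot(\max_h\|a_h-\tau(a_h)1\|)$, which is finite because the support is finite and each factor is bounded, shows that the image of $s$ in $V_g/(\mathbb{C}1\cap V_g)$, and thus in $(A\rtimes_{\alpha,r}G)/\mathbb{C}1$, ranges over a bounded and therefore totally bounded set. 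Since $[x]=[x_0]+[s]$ and the sum of two sets with totally bounded image again has totally bounded image, $\mathcal{R}_g$ has totally bounded image for every $g$, and the equivalence yields the Lipschitz condition.
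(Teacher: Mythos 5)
Your proof of the equivalence (the first paragraph) is the same as the paper's: Lemma \ref{FiniteIndex} for the reduction to cosets $Hg$, the subset argument for the forward direction, and Proposition \ref{DiagonalApproximation} together with the maps $\mathbb{E}_{[H,H]}(\,\cdot\,\lambda_{g_i^{-1}})\lambda_{g_i}$ of Lemma \ref{ConditionalExpectation-1} to split an approximant into pieces supported on single $[H,H]$-cosets with controlled commutator norms, followed by the standard $\varepsilon$-net transfer. No issues there.

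Your argument for the ``in particular'' clause is correct but genuinely different from the paper's. The paper extracts from the matrix coefficient $\langle[1\otimes M_{\ell},x](\xi\otimes\delta_{(hg)^{-1}}),\eta\otimes\delta_{e}\rangle=\ell(hg)\langle a_{h}\xi,\eta\rangle$ the explicit bound $\Vert a_{h}\Vert\leq(\ell(hg))^{-1}$ for $hg\neq e$, so that all coefficients except possibly the one along $\lambda_{e}$ lie in the set $\{a\mid\Vert a\Vert\leq L^{\prime},\ \Vert[D_{A},a]\Vert\leq L^{\prime}\}$, which is totally bounded in $A$ itself by Theorem \ref{Characterization}(3); the exceptional coefficient is handled through $A/\mathbb{C}1$ and condition (2), forcing a case distinction $g\in[H,H]$ versus $g\notin[H,H]$. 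You instead use only $L_{D_A}(a_h)\leq1$ (obtained via $\mathbb{E}_{\{e\}}$ and the block-diagonal description of $[D_A\otimes1,\widetilde{\pi}(a_h)]$, which matches the paper's coefficient estimate), center each coefficient with a state $\tau$, and dispose of the leftover scalar part $s\in\mathrm{Span}\{\lambda_{hg}\}$ by observing that $s\mapsto\Vert[1\otimes M_{\ell},s]\Vert$ is a norm on this finite-dimensional space modulo $\mathbb{C}1\cap V_g$ (the terms $(1\otimes\varphi_{hg}^{\ell})\lambda_{hg}$ for distinct $h$ act on disjoint families of basis vectors, so the kernel computation you indicate does close), whence boundedness of $[s]$ follows from norm equivalence. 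What your route buys: it treats the cases $g\in[H,H]$ and $g\notin[H,H]$ uniformly and avoids the explicit coefficient bound $\Vert a_h\Vert\leq(\ell(hg))^{-1}$; what it costs is that the $\varepsilon$-nets are no longer explicit and the argument leans on a soft compactness statement in finite dimensions. Both proofs use the finiteness of $[H,H]$ and the $M_{\ell}$-commutator in an essential way, just at different points.
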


\begin{proof} For $g\in G$ ,set $\mathcal{Q}_{g}:=\left\{ x\in C_{c}(Hg,\mathcal{A})\mid\left\Vert [D_{A}\otimes1,x]\right\Vert \leq1\text{ and }\left\Vert [1\otimes M_{\ell},x]\right\Vert \leq1\right\} $
and write $\mathcal{Q}_{g}^{\prime}$ for the set of all elements
$x=\sum_{h\in[H,H]}a_{h}\lambda_{hg}\in C_{c}(Hg,\mathcal{A})$ with
$(a_{h})_{h\in[H,H]}\subseteq\mathcal{A}$ satisfying $\left\Vert [D_{A}\otimes1,x]\right\Vert \leq1$
and $\left\Vert [1\otimes M_{\ell},x]\right\Vert \leq1$. The ``only
if'' direction follows in the same way as in the proof of Lemma \ref{FiniteIndex}.
For the ``if'' direction, assume that $\mathcal{Q}_{g}^{\prime}$
has totally bounded image in $(A\rtimes_{\alpha,r}G)/\mathbb{C}1$
for every $g\in G$ and let $\varepsilon>0$. By Proposition \ref{DiagonalApproximation},
for fixed $g\in G$, we find $\delta>0$ and finitely many elements
$g_{1},...,g_{n}\in G$ such that $\mathcal{Q}_{g}\subseteq_{\varepsilon/4}\delta\mathcal{Q}_{g}\cap C_{c}(K,\mathcal{A})$,
where $K:=\bigcup_{i=1}^{n}[H,H]g_{i}$. In other words, for every
$x\in\mathcal{Q}_{g}$, there exists $y\in\delta\mathcal{Q}_{g}$ of
the form $y=\sum_{i=1}^{n}\sum_{h\in[H,H]}b_{hg_{i}}\lambda_{hg_{i}}$
with $b_{hg_{i}}\in\mathcal{A}$ for $h\in[H,H]$, $i=1,...,n$ such
that $\left\Vert x-y\right\Vert <\frac{\varepsilon}{4}$. For every
$i$, set $y_{i}:=\sum_{h\in[H,H]}b_{hg_{i}}\lambda_{hg_{i}}$. By
the same argument as in the proof of Lemma \ref{FiniteIndex} and
Proposition \ref{DiagonalApproximation}, 
\begin{eqnarray*}
\left\Vert [1\otimes M_{\ell},y_{i}]\right\Vert  & = & \Vert\sum_{h\in[H,H]}(1\otimes\varphi_{hg_{i}})b_{hg_{i}}\lambda_{hg_{i}}\Vert\\
&=& \Vert\mathbb{E}_{[H,H]}([1\otimes M_{\ell},y\lambda_{g_{i}^{-1}}]\lambda_{g_{i}})\Vert \\
 & = & \Vert[1\otimes M_{\ell},\mathbb{E}_{[H,H]}(y\lambda_{g_{i}^{-1}})\lambda_{g_{i}}]\Vert \\
 & \leq & \left\Vert [1\otimes M_{\ell},y]\right\Vert \\
 & \leq & \delta
\end{eqnarray*}
and similarly,
\[
\left\Vert [D_{A}\otimes1,y_{i}]\right\Vert =\Vert\mathbb{E}_{[H,H]}([D_{A}\otimes1,y]\lambda_{g_{i}^{-1}})\lambda_{g_{i}}\Vert\leq\Vert[D_{A}\otimes1,y]\Vert\leq\delta,
\]
where $\mathbb{E}_{[H,H]}$ is the contractive linear map from Lemma
\ref{ConditionalExpectation-1}. We conclude that $\mathcal{Q}_{g}\subseteq_{\varepsilon/4}\mathcal{R}$,
where $\mathcal{R}:=\delta\mathcal{Q}_{g_{1}}^{\prime}+...+\delta\mathcal{Q}_{g_{n}}^{\prime}$.
From our assumption, it can easily be derived that $\mathcal{R}$ has
a totally bounded image in $(A\rtimes_{\alpha,r}G)/\mathbb{C}1$. We
hence find finitely many elements $x_{1},...,x_{m}\in\mathcal{R}$
such that for every $y\in\mathcal{R}$, there exists $1\leq i\leq m$
with $\left\Vert (y-x_{i})+\mathbb{C}1\right\Vert <\frac{\varepsilon}{4}$.
For every $i$, choose $\widetilde{x}_{i}\in\mathcal{Q}_{g}$ with
$\left\Vert (x_{i}-\widetilde{x}_{i})+\mathbb{C}1\right\Vert <\frac{\varepsilon}{2}$,
if possible. We claim that the $\varepsilon$-balls around the $\widetilde{x}_{i}+\mathbb{C}1$
cover the image of $\mathcal{Q}_{g}$ in $(A\rtimes_{\alpha,r}G)/\mathbb{C}1$.
Indeed, for $x\in\mathcal{Q}_{g}$, there exists $y\in\mathcal{R}$
with $\left\Vert x-y\right\Vert <\frac{\varepsilon}{4}$, and we find
$i$ with $\left\Vert (y-x_{i})+\mathbb{C}1\right\Vert <\frac{\varepsilon}{4}$.
By $\left\Vert (x-x_{i})+\mathbb{C}1\right\Vert \leq\left\Vert (x-y)+\mathbb{C}1\right\Vert +\left\Vert (y-x_{i})+\mathbb{C}1\right\Vert <\frac{\varepsilon}{2}$,
the element $\widetilde{x}_{i}\in\mathcal{Q}_{g}$ exists and

\[
\left\Vert (x-\widetilde{x}_{i})+\mathbb{C}1\right\Vert \leq\left\Vert (x-y)+\mathbb{C}1\right\Vert +\left\Vert (y-x_{i})+\mathbb{C}1\right\Vert +\left\Vert (x_{i}-\widetilde{x}_{i})+\mathbb{C}1\right\Vert <\varepsilon.
\]
The claim follows. Hence, the image of $\mathcal{Q}_{g}$ in $(A\rtimes_{\alpha,r}G)/\mathbb{C}1$
is totally bounded, and thus, by Lemma \ref{FiniteIndex}, the even spectral
triple $(C_{c}(G,\mathcal{A}),\mathcal{H}\oplus\mathcal{H},D)$ is a spectral metric space

For the proof of the second statement, assume that the derived subgroup
$[H,H]$ is finite and fix $g\in G$. We proceed by arguing along
the lines of the proof of \cite[Theorem 2.11]{HSWZ13}. For $x=\sum_{h\in[H,H]}a_{h}\lambda_{hg}\in\mathcal{Q}_{g}^{\prime}$
with $(a_{h})_{h\in[H,H]}\subseteq\mathcal{A}$, one has that for every
$\xi,\eta$ in the domain of $D_{A}$ and $h\in H$, 
\begin{eqnarray*}
\left\langle [D_{A},a_{h}]\xi,\eta\right\rangle  & = & \left\langle (D_{A}\otimes1)x\lambda_{(hg)^{-1}}(\xi\otimes\delta_{e}),\eta\otimes\delta_{e}\right\rangle -\left\langle x\lambda_{(hg)^{-1}}(D_{A}\otimes1)(\xi\otimes\delta_{e}),\eta\otimes\delta_{e}\right\rangle \\
 & = & \left\langle [D_{A}\otimes1,x](\xi\otimes\delta_{(hg)^{-1}}),\eta\otimes\delta_{e}\right\rangle 
\end{eqnarray*}
and therefore, $\left\Vert [D_{A},a_{h}]\right\Vert \leq\left\Vert [D_{A}\otimes1,x]\right\Vert \leq1$.
Similarly, for $h\in H$ and $\xi,\eta\in\mathcal{H}_{A}$, 
\[
\left\langle [1\otimes M_{\ell},x](\xi\otimes\delta_{(hg)^{-1}}),\eta\otimes\delta_{e}\right\rangle =\left\langle x(1\otimes M_{\ell})(\xi\otimes\delta_{(hg)^{-1}}),\eta\otimes\delta_{e}\right\rangle =\ell(hg)\left\langle a_{h}\xi,\eta\right\rangle 
\]
so that $\left\Vert a_{h}\right\Vert \leq(\ell(hg))^{-1}\left\Vert [1\otimes M_{\ell},x]\right\Vert \leq L$
for every $h\in[H,H]\setminus\{g^{-1}\}$, where
\[
L:=\max\{(\ell(hg))^{-1}\mid h\in[H,H]\setminus\{g^{-1}\}\}.
\]
It follows that $\mathcal{Q}_{g}^{\prime}$
is contained in the set of all $x=\sum_{h\in[H,H]}a_{h}\lambda_{hg}\in C_{c}(Hg,\mathcal{A})$
with $(a_{h})_{h\in[H,H]}\subseteq\mathcal{A}$ satisfying $\left\Vert [D_{A},a_{h}]\right\Vert \leq L^\prime$
for all $h\in H$ and $\left\Vert a_{h}\right\Vert \leq L^\prime$ for all
$h\in[H,H]\setminus\{g^{-1}\}$ with $L^{\prime}:=\max\{1,L\}$. Denote this set by $\mathcal{S}_{g}$.
We claim that $\mathcal{S}_{g}$ has totally bounded image in $(A\rtimes_{\alpha,r}G)/\mathbb{C}1$,
which then implies that $\mathcal{Q}_{g}^{\prime}$ has totally bounded
image in $(A\rtimes_{\alpha,r}G)/\mathbb{C}1$, and hence, by the previous
part, that the triple $(C_{c}(G,\mathcal{A}),\mathcal{H}\oplus\mathcal{H},D)$
is a spectral metric space. Indeed, by Theorem \ref{Characterization},
the set $F:=\{a\in\mathcal{A}\mid\left\Vert a\right\Vert \leq L^\prime \text{ and }\left\Vert [D_{A},a]\right\Vert \leq L^\prime\}$
is totally bounded in $A$. For every $\varepsilon>0$, we can hence
pick a finite subset $F_{1}$ of $F$ such that the $\frac{\varepsilon}{\#[H,H]}$-balls
around its elements cover $F$. Similarly, we can choose a finite subset
$F_{2}$ of $\{a\in\mathcal{A}\mid\left\Vert [D_{A},a]\right\Vert \leq L^\prime \}$
such that the $\frac{\varepsilon}{\#[H,H]}$-balls around the image
of the elements of $F_{2}$ in $A/\mathbb{C}1$ covers the image of
$\{a\in\mathcal{A}\mid\left\Vert [D_{A},a]\right\Vert \leq L^\prime\}$. From
this, we can deduce that if $g\in[H,H]$, the image of 
\[
\left\{ \sum_{h\in[H,H]}f_{h}\lambda_{hg}\mid f_{g^{-1}}\in F_{2}\text{ and }f_{h}\in F_{1}\text{ for }h\neq g^{-1}\right\} 
\]
is an $\varepsilon$-net for the image of $\mathcal{S}_{g}$ in $(A\rtimes_{\alpha,r}G)/\mathbb{C}1$,
and similarly, the image of 
\[
\left\{ \sum_{h\in[H,H]}f_{h}\lambda_{hg}\mid f_{h}\in F_{1}\text{ for all }h\in[H,H]\right\} 
\]
is an $\varepsilon$-net for the image of $\mathcal{S}_{g}$ in $(A\rtimes_{\alpha,r}G)/\mathbb{C}1$
if $g\notin[H,H]$. This finishes the proof. \end{proof}

\vspace{3mm}


\subsection{The construction of odd spectral triples\label{ConverseCase}}

In \cite[Subsection 2.4]{HSWZ13}, it was noted that analogous to the
construction of even spectral triples on crossed product C$^{\ast}$-algebras
coming from odd spectral triples, a similar procedure can be used
to obtain odd spectral triples coming from even ones. As in Subsection
\ref{subsec:Crossed-product-C-algebras}, let $\alpha:G\rightarrow\text{Aut}(A)$
be an action of a discrete group $G$ on a unital separable C$^{\ast}$-algebra
$A$ and let $\ell:G\rightarrow\mathbb{R}_{+}$ be a proper length
function on $G$. Assume that 
\[
\left(\mathcal{A},\mathcal{H}_{A,1}\oplus\mathcal{H}_{A,2},\left(\begin{array}{cc}
0 & D_{A}\\
D_{A}^{\ast} & 0
\end{array}\right)\right)
\]
is a spectral triple on $A$ with $\mathbb{Z}_{2}$-grading $\mathcal{H}_{A}:=\mathcal{H}_{A,1}\oplus\mathcal{H}_{A,2}$
and corresponding faithful representation $\pi:=\pi_{1}\oplus\pi_{2}$.
As before, consider the canonical odd spectral triple $(\mathbb{C}[G],\ell^{2}(G),M_{\ell})$
on $C_{r}^{\ast}(G)$, where $M_{\ell}$ denotes the multiplication
operator $\delta_{g}\mapsto\ell(g)\delta_{g}$ for $g\in G$. The
reduced crossed product C$^{\ast}$-algebra $A\rtimes_{\alpha,r}G$
can be (faithfully) represented on $\mathcal{H}:=(\mathcal{H}_{A,1}\otimes\ell^{2}(G))\oplus(\mathcal{H}_{A,2}\otimes\ell^{2}(G))$
in a natural way. By assuming \emph{metric equicontinuity} in the
sense that $\alpha_{g}(\mathcal{A})\subseteq\mathcal{A}$ and $\sup_{g\in G}\Vert\pi_{1}(g.a)D_{A}-D_{A}\pi_{2}(g.a)\Vert<\infty$
for all $a\in\mathcal{A}$, $g\in G$, one can define an odd spectral
triple $(C_{c}(G,\mathcal{A}),\mathcal{H},D)$ on $A\rtimes_{\alpha,r}G$,
where
\begin{equation}
D:=\left(\begin{array}{cc}
1\otimes M_{\ell} & D_{A}\otimes1\\
D_{A}^{\ast}\otimes1 & -1\otimes M_{\ell}
\end{array}\right).\label{eq:DiracOperator}
\end{equation}
This triple is non-degenerate if the one on $A$ is. \\

We claim that an analog to Theorem \ref{MainTheorem} holds in this setting as well. This
follows from a variation of the characterization in Proposition \ref{QuantumMetricSpaceCharacterization}.

\begin{proposition} \label{QuantumMetricSpaceCharacterization2}
The even spectral triple $(C_{c}(G,\mathcal{A}),\mathcal{H},D)$ defined
above is a spectral metric space if and only if the set of
all elements $x=\sum_{g\in G}a_{g}\lambda_{g}\in C_{c}(G,\mathcal{A})\subseteq\mathcal{B}(\mathcal{H})$
with $(a_{g})_{g\in G}\subseteq\mathcal{A}$ for which the operator norms of the commutators
\[
\left[x,\left(\begin{array}{cc}
1\otimes M_{\ell} & 0\\
0 & -1\otimes M_{\ell}
\end{array}\right)\right],\left[x,\left(\begin{array}{cc}
0 & D_{A}\otimes1\\
D_{A}^{\ast}\otimes1 & 0
\end{array}\right)\right]\in\mathcal{B}(\mathcal{H})
\]
are bounded by $1$, has totally bounded image in $(A\rtimes_{\alpha,r}G)/\mathbb{C}1$.
\end{proposition}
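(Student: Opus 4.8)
The plan is to imitate the proof of Proposition \ref{QuantumMetricSpaceCharacterization}, with the two commutators $[D_A\otimes1,x]$ and $[1\otimes M_\ell,x]$ used there replaced by the two commutators appearing in the present statement. By Theorem \ref{Characterization} the triple $(C_c(G,\mathcal{A}),\mathcal{H},D)$ satisfies the Lipschitz condition if and only if the set $\mathcal{Q}:=\{x\in C_c(G,\mathcal{A})\mid\Vert[D,x]\Vert\leq1\}$ has totally bounded image in $(A\rtimes_{\alpha,r}G)/\mathbb{C}1$. Writing $\mathcal{Q}'$ for the set described in the statement, it then suffices to prove the inclusions $\mathcal{Q}\subseteq\mathcal{Q}'$ and $\mathcal{Q}'\subseteq2\mathcal{Q}$, since total boundedness of the image passes to subsets and is preserved under scaling.

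First I would split the Dirac operator as $D=M_D+D_{\mathrm{off}}$, where $M_D$ is the diagonal operator $\mathrm{diag}(1\otimes M_\ell,-1\otimes M_\ell)$ and $D_{\mathrm{off}}$ is the operator with off-diagonal entries $D_A\otimes1$ and $D_A^\ast\otimes1$; these are exactly the two operators with which the commutators in the statement are formed (and the sign of a commutator does not affect its norm). Because the representation of $A$ respects the $\mathbb{Z}_2$-grading and each $\lambda_g$ acts only on the $\ell^2(G)$-factor, every $x\in C_c(G,\mathcal{A})$ is block-diagonal with respect to $\mathcal{H}=(\mathcal{H}_{A,1}\otimes\ell^2(G))\oplus(\mathcal{H}_{A,2}\otimes\ell^2(G))$. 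A short computation then shows that $[M_D,x]$ is block-diagonal while $[D_{\mathrm{off}},x]$ is block-off-diagonal, so that in the decomposition $[D,x]=[M_D,x]+[D_{\mathrm{off}},x]$ the summands $[M_D,x]$ and $[D_{\mathrm{off}},x]$ are precisely the diagonal and off-diagonal block parts of $[D,x]$.

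The two inclusions now follow from elementary norm estimates. Each block of $[D,x]$ is a corner compression $P_i[D,x]P_j$, where $P_1,P_2$ denote the two grading projections on $\mathcal{H}$, and compression is contractive; taking the maximum over the diagonal blocks and over the off-diagonal blocks yields $\Vert[M_D,x]\Vert\leq\Vert[D,x]\Vert$ and $\Vert[D_{\mathrm{off}},x]\Vert\leq\Vert[D,x]\Vert$, whence $\mathcal{Q}\subseteq\mathcal{Q}'$. Conversely, the triangle inequality gives $\Vert[D,x]\Vert\leq\Vert[M_D,x]\Vert+\Vert[D_{\mathrm{off}},x]\Vert$, so that any $x\in\mathcal{Q}'$ satisfies $\Vert[D,x]\Vert\leq2$, i.e.\ $\mathcal{Q}'\subseteq2\mathcal{Q}$. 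Combined with Theorem \ref{Characterization}, this establishes the desired equivalence.

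Since the whole argument reduces to the block structure of $[D,x]$ together with the triangle inequality and the contractivity of corner compressions, I do not expect a genuine obstacle. The only points requiring care are verifying that $x$ really is block-diagonal and that $[M_D,x]$, $[D_{\mathrm{off}},x]$ are exactly the diagonal and off-diagonal parts of $[D,x]$; both rest on the grading-compatibility of the representation and on the boundedness of the two commutators, which is guaranteed by the metric-equicontinuity hypothesis built into the construction above.
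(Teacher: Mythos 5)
Your argument is correct and is essentially the adaptation of the paper's proof of Proposition \ref{QuantumMetricSpaceCharacterization} that the author intends (the paper omits the proof of this proposition as a routine variation): apply Theorem \ref{Characterization} and show that each of $\mathcal{Q}$, $\mathcal{Q}'$ is contained in a scalar multiple of the other, using the block structure of $[D,x]$ and the triangle inequality. Your observation that $[M_D,x]$ and $[D_{\mathrm{off}},x]$ are exactly the diagonal and off-diagonal corner compressions of $[D,x]$ even yields $\mathcal{Q}\subseteq\mathcal{Q}'$ with constant $1$, slightly sharper than the constants appearing in the paper's odd-to-even argument.
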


\begin{theorem} \label{MainTheorem2} Let
\[
\left(\mathcal{A},\mathcal{H}_{A,1}\oplus\mathcal{H}_{A,2},\left(\begin{array}{cc}
0 & D_{A}\\
D_{A}^{\ast} & 0
\end{array}\right)\right)
\]
be a non-degenerate even spectral triple on a separable unital C$^{\ast}$-algebra
$A$ with $\mathbb{Z}_{2}$-grading $\mathcal{H}_{A}:=\mathcal{H}_{A,1}\oplus\mathcal{H}_{A,2}$
and corresponding representation $\pi:=\pi_{1}\oplus\pi_{2}$, and
assume that the induced Lipschitz semi-norm $L_{D_{A}}(a):=\left\Vert [D_{A},a]\right\Vert ,a\in\mathcal{A}$
defines a compact quantum metric space $(A,L_{D_{A}})$. Let further
$\alpha:G\rightarrow\text{Aut}(A)$ be a metrically equicontinuous
action of a finitely generated discrete group $G$ equipped with a
proper length function $\ell:G\rightarrow\mathbb{R}_{+}$ and assume
that there exists a finite index subgroup $H$ of $G$ that is separated
with respect to the restricted length function $\ell|_{H}$. As before,
define $\mathcal{H}:=(\mathcal{H}_{A,1}\otimes\ell^{2}(G))\oplus(\mathcal{H}_{A,2}\otimes\ell^{2}(G))$
and $D$ as in \eqref{eq:DiracOperator}. Then the odd spectral triple
$(C_{c}(G,\mathcal{A}),\mathcal{H},D)$ is a spectral metric space
if and only if for every $g\in G$, the set of all elements $x=\sum_{h\in[H,H]}a_{h}\lambda_{hg}\in C_{c}(G,\mathcal{A})$
with $(a_{h})_{h\in[H,H]}\subseteq\mathcal{A}$ for which the operator norms of the commutators
\[
\left[x,\left(\begin{array}{cc}
1\otimes M_{\ell} & 0\\
0 & -1\otimes M_{\ell}
\end{array}\right)\right],\left[x,\left(\begin{array}{cc}
0 & D_{A}\otimes1\\
D_{A}^{\ast}\otimes1 & 0
\end{array}\right)\right]
\]
are bounded by $1$, has totally bounded image in $(A\rtimes_{\alpha,r}G)/\mathbb{C}1$.

In particular, if $[H,H]$ is finite, then $(C_{c}(G,\mathcal{A}),\mathcal{H},D)$
is a spectral metric space. \end{theorem}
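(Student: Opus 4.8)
The plan is to mirror the proof of Theorem~\ref{MainTheorem}, replacing the characterization in Proposition~\ref{QuantumMetricSpaceCharacterization} by its odd counterpart Proposition~\ref{QuantumMetricSpaceCharacterization2} and adapting each commutator computation to the block-operator setting of~\eqref{eq:DiracOperator}. Write $\mathcal{M}$ and $\mathcal{D}$ for the diagonal and off-diagonal operators appearing in Proposition~\ref{QuantumMetricSpaceCharacterization2}. The starting observation is that an element $x=\sum_{g}a_{g}\lambda_{g}\in C_{c}(G,\mathcal{A})$ acts \emph{block-diagonally} on $\mathcal{H}=(\mathcal{H}_{A,1}\otimes\ell^{2}(G))\oplus(\mathcal{H}_{A,2}\otimes\ell^{2}(G))$, namely as $\widetilde{x}_{1}\oplus\widetilde{x}_{2}$ where $\widetilde{x}_{j}$ uses the representation $\pi_{j}$. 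Consequently $[x,\mathcal{M}]$ is again block-diagonal, with $j$-th entry $\pm[1\otimes M_{\ell},\widetilde{x}_{j}]$, so by Lemma~\ref{Commutator identity} its norm is governed by the cocycles $\varphi_{g}^{\ell}$ exactly as in the even case; while $[x,\mathcal{D}]$ is off-diagonal, with $(1,2)$-entry $\widetilde{x}_{1}(D_{A}\otimes1)-(D_{A}\otimes1)\widetilde{x}_{2}$, which encodes the $D_{A}$-part of the even triple on $A$.

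With this dictionary the preparatory results transfer. First I would record the analog of Lemma~\ref{ConditionalExpectation-1}: extending the contractive map $\mathbb{E}_{H}$ block-wise to $\mathcal{B}(\mathcal{H})$ (it compresses each $\ell^{2}(G)$-factor by the projections $1\otimes P_{Hg_{i}}$), the commutation identities relating $\mathbb{E}_{H}$ with $[\,\cdot\,,\mathcal{D}]$ and $[\,\cdot\,,\mathcal{M}]$ continue to hold, because $1\otimes P_{Hg_{i}}$ commutes with $D_{A}\otimes1$, $D_{A}^{\ast}\otimes1$ and $1\otimes M_{\ell}$ alike. This yields the odd analog of Lemma~\ref{FiniteIndex}, and, since the proof of Proposition~\ref{DiagonalApproximation} uses only the multiplication operators $M_{\ell}$ (through Lemma~\ref{OzawaRieffel}) together with the maps $P_{i}$ and $\mathbb{E}_{i}$ --- all insensitive to the block structure --- the odd analog of Proposition~\ref{DiagonalApproximation} as well. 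Thus the reduction of the Lipschitz condition to total boundedness of the sets supported on $[H,H]$-cosets proceeds exactly as in the first part of the proof of Theorem~\ref{MainTheorem}.

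The substantive new computation is the second statement, where $[H,H]$ is finite. For $x=\sum_{h\in[H,H]}a_{h}\lambda_{hg}$ satisfying the two commutator bounds, I would extract scalar estimates on the coefficients $a_{h}$ by pairing against vectors $\xi\otimes\delta_{(hg)^{-1}}$ and $\eta\otimes\delta_{e}$ placed in the appropriate blocks, as in the final part of the proof of Theorem~\ref{MainTheorem}. Pairing the $(1,2)$-entry of $[x,\mathcal{D}]$ with $\xi\in\mathcal{H}_{A,2}$ and $\eta\in\mathcal{H}_{A,1}$ isolates $\langle(\pi_{1}(a_{h})D_{A}-D_{A}\pi_{2}(a_{h}))\xi,\eta\rangle$, whence $L_{D_{A}}(a_{h})=\|\pi_{1}(a_{h})D_{A}-D_{A}\pi_{2}(a_{h})\|\leq1$; pairing $[x,\mathcal{M}]$ on the first block isolates a multiple of $\ell(hg)\langle a_{h}\xi,\eta\rangle$, whence $\|a_{h}\|\leq(\ell(hg))^{-1}$ for $h\neq g^{-1}$. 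Setting $L':=\max(\{1\}\cup\{(\ell(hg))^{-1}\mid h\in[H,H]\setminus\{g^{-1}\}\})$, this places the relevant set inside the set of $x$ whose coefficients lie in $\{a\in\mathcal{A}\mid\|a\|\leq L',\,L_{D_{A}}(a)\leq L'\}$ and $\{a\in\mathcal{A}\mid L_{D_{A}}(a)\leq L'\}$, which are totally bounded (respectively have totally bounded image in $A/\mathbb{C}1$) by Theorem~\ref{Characterization}, since $(A,L_{D_{A}})$ is a compact quantum metric space. As $\#[H,H]<\infty$, taking finite products of $\varepsilon$-nets produces an $\varepsilon$-net for the image of the relevant set in $(A\rtimes_{\alpha,r}G)/\mathbb{C}1$, which finishes the argument.

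The main obstacle I anticipate is bookkeeping: keeping the two components $\mathcal{H}_{A,1},\mathcal{H}_{A,2}$ straight when extracting the coefficient bounds, since $[x,\mathcal{D}]$ genuinely mixes $\pi_{1}$ and $\pi_{2}$, unlike the single-representation commutator $[D_{A}\otimes1,x]$ of the even case. The one place where the block structure could have obstructed the transfer is the commutation of $\mathbb{E}_{H}$ with $[\,\cdot\,,\mathcal{D}]$; once one checks that $1\otimes P_{Hg_{i}}$ commutes with $\mathcal{D}$, the remainder is a faithful translation of the even case.
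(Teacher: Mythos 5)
Your proposal is correct and matches the paper's intended argument exactly: the paper omits the proof of Theorem \ref{MainTheorem2}, stating only that it is a variation of the arguments used for Theorem \ref{MainTheorem}, and your block-diagonal/off-diagonal decomposition of the two commutators, the block-wise extension of $\mathbb{E}_{H}$, and the coefficient extraction in the finite-$[H,H]$ case constitute precisely that variation. The only point to tidy is that the bound $\left\Vert a_{h}\right\Vert \leq(\ell(hg))^{-1}$ should be read off from \emph{both} diagonal blocks of the commutator with $\mathcal{M}$, since $\pi_{1}$ alone need not be faithful; this gives $\left\Vert \pi_{j}(a_{h})\right\Vert \leq(\ell(hg))^{-1}$ for $j=1,2$ and hence the claimed estimate on $\left\Vert a_{h}\right\Vert =\max_{j}\left\Vert \pi_{j}(a_{h})\right\Vert$.
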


Despite being lengthy, the arguments for proving Theorem \ref{MainTheorem2}
are essentially variations of those in Subsection \ref{subsec:Crossed-product-C-algebras}
and Subsection \ref{MainPart}. We therefore omit the details here. \\

\begin{remark} By applying \cite[Proposition 2.8]{HSWZ13} and its counterpart for
even spectral triples (see the discussion in \cite[Subsection 2.4]{HSWZ13}),
an iteration of Theorem \ref{MainTheorem} and Theorem \ref{MainTheorem2} 
allows to construct spectral triples on suitable crossed products
of the form $A\rtimes_{\alpha,r}G^{m}\cong(...((A\rtimes_{\alpha_{1},r}G)\rtimes_{\alpha_{2},r}G)...)\rtimes_{\alpha_{m},r}G$,
$m\in\mathbb{N}$ that give rise to quantum metric spaces; compare
with \cite[Theorem 2.14]{HSWZ13}. Here, as before, $G$ is a finitely
generated discrete group equipped with a proper length function $\ell:G\rightarrow\mathbb{R}_{+}$
that admits a finite index subgroup $H$ that is separated with respect
to $\ell|_{H}$ and whose commutator $[H,H]$ is finite. The $\alpha_{i}$,
$1 \leq i \leq m $ denote the (metrically equicontinuous) coordinate $G$-actions
of $\alpha$. \end{remark}

\vspace{3mm}


\section{Groups separated with respect to length functions\label{SeparatedGroups}}

Recall that we call a finitely generated discrete group $G$ separated
with respect to a pseudo-length function $\ell$ if $\text{Hom}(\text{im}(p_{G}),\mathbb{R})=\text{Span}\left\{ \widehat{\mu}_{\ell}\mid\mu\text{ invariant mean}\right\} $,
where $p_{G}$ is the projection onto the torsion-free component of
the Abelianization of $G$ and $\widehat{\mu}_{\ell}$ is given by
$(\widehat{\mu}_{\ell}\circ p_{G})(g):=\mu(\varphi_{g}^{\ell})$ for
$g\in G$. In the present section, we study the notion's link to the
asymptotic semi-norm construction in the Abelian setting and provide
groups and length functions that satisfy (a variation of) this quality. We further
give a counterexample that demonstrates that even the integers equipped
with a very natural length function are not separated.

\vspace{3mm}


\subsection{Integer lattices\label{subsec:IntegerLattices}}

Recall that a \emph{quasi-isometric embedding} between metric
spaces $(X,d_{X})$ and $(Y,d_{Y})$ is a map $f:X\rightarrow Y$
for which there exists $C\geq1$ and $r>0$ with
\[
C^{-1}d_{X}(x,y)-r\leq d_{Y}(f(x),f(y))\leq Cd(x,y)+r
\]
for all $x,y\in X$. It is well-known that, given finite generating
sets $S=S^{-1}$ and $S^{\prime}=(S^\prime)^{-1}$ of a group $G$, the identity map on $G$
equipped with the respective induced word metrics defines a quasi-isometric
embedding. Similarly, if $G$ is a finitely generated group and $H\leq G$
is a finite index subgroup, then $H$ is also finitely generated, and
the embedding of $H$ into $G$ is quasi-isometric with respect to
the induced word metrics; this follows, for instance, from the Milnor-Svarc
Lemma. We call two length functions $\ell,\ell^{\prime}:G\rightarrow\mathbb{R}_{+}$
on a group $G$ \emph{bi-Lipschitz equivalent} if the identity on
$G$ equipped with the metrics $d_{\ell}$ and $d_{\ell^{\prime}}$
is a quasi-isometric embedding; that is if there exist $C\geq1$ and
$r \geq 0$ with $C^{-1}\ell(g)-r\leq\ell^{\prime}(g)\leq C\ell(g)+r$
for all $g\in G$.

From now on, we restrict to the case of integer lattices $G=\mathbb{Z}^{m}$,
$m\in\mathbb{N}$ equipped with length functions $\ell:G\rightarrow\mathbb{R}_{+}$.
By applying Fekete's Subadditivity Lemma, one obtains that for every
$g\in G$, the limit $\lim_{i\rightarrow\infty}i^{-1}\ell(ig)$ exists
and that it coincides with $\inf_{i\in\mathbb{N}}i^{-1}\ell(ig)$;
so in particular, $\lim_{i\rightarrow\infty}i^{-1}\ell(ig)\leq\ell(g)$.
The function $g\mapsto\lim_{i\rightarrow\infty}i^{-1}\ell(ig)$ uniquely
extends to a semi-norm $\left\Vert \cdot\right\Vert _{\ell}$ on $\mathbb{R}^{m}$,
which is called the \emph{asymptotic semi-norm} (or \emph{stable semi-norm}) associated with $\ell$; see, for example, \cite[Proposition 8.5.3]{BBI01}. In many interesting cases,
the asymptotic semi-norm is positive definite (i.e., a genuine norm). This
is, for instance, the case if $\ell$ is bi-Lipschitz equivalent to
a word length function (e.g., if $\mathbb{Z}^{m}$ embeds as a finite
index subgroup into a larger group and $\ell$ is a restricted word
length function). Indeed, in that case, there exists a constant $C\geq1$
such that 
\[
C^{-1}\left\Vert x\right\Vert _{1}=C^{-1}\left\Vert x\right\Vert _{\ell_{1}}\leq\left\Vert x\right\Vert _{\ell}\leq C\left\Vert x\right\Vert _{\ell_{1}}=C\left\Vert x\right\Vert _{1}
\]
for every $x\in\mathbb{R}^{m}$. Here, $\left\Vert \cdot\right\Vert _{1}$
denotes the 1-norm on $\mathbb{R}^{m}$, and $\ell_{1}$ is the word length function associated with the canonical generating set of $\mathbb{Z}^m$.

The restriction $\ell^{\text{as}}:\mathbb{Z}^{m}\rightarrow\mathbb{R}_{+}$
of the asymptotic semi-norm to $\mathbb{Z}^{m}$ is a homogeneous
pseudo-length function. The proof of the following lemma is an easy
exercise.

\begin{lemma} \label{StrongConvergence} For every $g\in G$, the
sequence $(i^{-1}(\varphi_{ig}^{\ell}-\varphi_{ig}^{\ell^{\text{as}}}))_{i\in\mathbb{N}}\subseteq\mathcal{B}(\ell^{2}(G))$
strongly converges to $0$. \end{lemma}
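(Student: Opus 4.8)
The plan is to use that every operator in the sequence is a multiplication (diagonal) operator in the canonical basis $(\delta_h)_{h\in G}$, and to reduce strong convergence to a uniform norm bound together with entrywise convergence. Since $G=\mathbb{Z}^m$ is Abelian I write it additively, so that $g^{-1}h$ reads $h-ig$ and, abbreviating the asymptotic semi-norm by $\|\cdot\|_\ell$ so that $\ell^{\text{as}}(x)=\|x\|_\ell$, the operator $i^{-1}(\varphi_{ig}^{\ell}-\varphi_{ig}^{\ell^{\text{as}}})$ is the multiplication operator with diagonal entries
\[
c_i(h):=\frac{1}{i}\Big[\big(\ell(h)-\ell(h-ig)\big)-\big(\|h\|_\ell-\|h-ig\|_\ell\big)\Big],\qquad h\in G.
\]
For a sequence of diagonal operators $D_i$ with entries $c_i(h)$, strong convergence to $0$ follows once one checks that $M:=\sup_{i,h}|c_i(h)|<\infty$ and that $c_i(h)\to0$ for each fixed $h$. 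Indeed, for $\xi\in\ell^2(G)$ one has $\|D_i\xi\|^2=\sum_{h}|c_i(h)|^2|\xi(h)|^2$, where each summand tends to $0$ and is dominated by the summable function $h\mapsto M^2|\xi(h)|^2$, so dominated convergence yields $\|D_i\xi\|\to0$.

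For the uniform bound I would invoke the reverse triangle inequality $|\ell(a)-\ell(b)|\le\ell(a-b)$, valid for any length function, together with its analogue for the semi-norm $\|\cdot\|_\ell$. Taking $a=h$ and $b=h-ig$ gives $|\ell(h)-\ell(h-ig)|\le\ell(ig)$ and $|\|h\|_\ell-\|h-ig\|_\ell|\le\|ig\|_\ell=i\|g\|_\ell$, whence
\[
|c_i(h)|\le\frac{1}{i}\ell(ig)+\|g\|_\ell\le\ell(g)+\|g\|_\ell
\]
for all $i$ and $h$, using $i^{-1}\ell(ig)\le\ell(g)$. As $g$ is fixed, this shows $M<\infty$.

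The entrywise convergence is the one non-routine ingredient and rests on the defining property $i^{-1}\ell(ig)\to\|g\|_\ell$ of the asymptotic semi-norm. Fixing $h$, the term $i^{-1}(\ell(h)-\|h\|_\ell)$ plainly tends to $0$, so it remains to handle $i^{-1}(\ell(h-ig)-\|h-ig\|_\ell)$. Here I would absorb the fixed shift $h$ into bounded errors: subadditivity gives $\ell(h-ig)=\ell(ig)+r_i$ with $|r_i|\le\ell(h)$, while homogeneity and the triangle inequality give $\|h-ig\|_\ell=i\|g\|_\ell+s_i$ with $|s_i|\le\|h\|_\ell$. Consequently
\[
\frac{1}{i}\big(\ell(h-ig)-\|h-ig\|_\ell\big)=\Big(\frac{1}{i}\ell(ig)-\|g\|_\ell\Big)+\frac{r_i-s_i}{i},
\]
and both summands on the right tend to $0$, the first by the definition of $\|g\|_\ell$ and the second since $|r_i-s_i|\le\ell(h)+\|h\|_\ell$ is bounded in $i$. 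Combining this with the first term gives $c_i(h)\to0$, and together with the uniform bound the strong convergence follows. The only place demanding care is this last estimate: one must recognise that, along the shifted ray $h-ig$, the discrepancy between $\ell$ and its stable norm is controlled after division by $i$ precisely by the convergence $i^{-1}\ell(ig)\to\|g\|_\ell$, with the fixed offset $h$ contributing only errors of order $O(1/i)$.
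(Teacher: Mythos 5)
Your proof is correct; the paper in fact leaves this lemma as an "easy exercise" and supplies no argument, and your route (uniform bound on the diagonal entries via the reverse triangle inequality, pointwise convergence via $i^{-1}\ell(ig)\to\Vert g\Vert_\ell$ with the fixed offset $h$ contributing only $O(1/i)$ errors, then dominated convergence over $h$) is exactly the intended one. All estimates check out, including the homogeneity $\Vert ig\Vert_\ell=i\Vert g\Vert_\ell$ and the bound $i^{-1}\ell(ig)\leq\ell(g)$ from subadditivity.
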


In general, there is no reason to expect that the sequence in Lemma
\ref{StrongConvergence} converges with respect to the operator norm (i.e., uniformly). Still,
for many natural examples, that is the case.

As it turns out, $\ell^{\text{as}}$
very naturally occurs in the context of the question for separateness
of the pair $(G,\ell)$.

\begin{proposition} \label{OperatorConvergence} Let $\ell:G\rightarrow\mathbb{R}_{+}$
be a length function on $G=\mathbb{Z}^{m}$, $m\in\mathbb{N}$. Assume
that $i^{-1}(\varphi_{ig}^{\ell}-\varphi_{ig}^{\ell^{\text{as}}})\rightarrow0$
uniformly. Then $(G,\ell)$ is separated if and only if
$(G,\ell^{\text{as}})$ is separated. \end{proposition}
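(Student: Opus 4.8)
The plan is to prove the sharper statement that, for \emph{every} invariant mean $\mu$ on $\ell^{\infty}(G)$, the two induced homomorphisms coincide, i.e. $\widehat{\mu}_{\ell}=\widehat{\mu}_{\ell^{\text{as}}}$ in $\text{Hom}(\mathbb{Z}^{m},\mathbb{R})$. Once this is established the asserted equivalence is immediate: since $G=\mathbb{Z}^{m}$ has torsion-free Abelianization, the map $p_{G}$ is the identity and $\text{im}(p_{G})=\mathbb{Z}^{m}$ for both length functions, so separateness of $(G,\ell)$ (respectively $(G,\ell^{\text{as}})$) is exactly the statement that $\text{Span}\{\widehat{\mu}_{\ell}\mid\mu\text{ invariant mean}\}=\text{Hom}(\mathbb{Z}^{m},\mathbb{R})$ (respectively the same with $\ell^{\text{as}}$). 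Equality of the individual homomorphisms forces the two spanning sets to coincide, whence one fills up $\text{Hom}(\mathbb{Z}^{m},\mathbb{R})$ if and only if the other does.

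To compare $\widehat{\mu}_{\ell}$ and $\widehat{\mu}_{\ell^{\text{as}}}$ I would fix an invariant mean $\mu$ and an element $g\in\mathbb{Z}^{m}$. By Lemma \ref{InducedHomomorphism}, applied both to the length function $\ell$ and to the (homogeneous) pseudo-length function $\ell^{\text{as}}$, the maps $g\mapsto\mu(\varphi_{g}^{\ell})$ and $g\mapsto\mu(\varphi_{g}^{\ell^{\text{as}}})$ are group homomorphisms $\mathbb{Z}^{m}\rightarrow\mathbb{R}$; in particular they are additive, so $\mu(\varphi_{ig}^{\ell})=i\,\mu(\varphi_{g}^{\ell})$ and $\mu(\varphi_{ig}^{\ell^{\text{as}}})=i\,\mu(\varphi_{g}^{\ell^{\text{as}}})$ for every $i\in\mathbb{N}_{\geq1}$. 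Dividing by $i$ and subtracting yields the identity
\[
\mu(\varphi_{g}^{\ell})-\mu(\varphi_{g}^{\ell^{\text{as}}})=\mu\!\left(i^{-1}(\varphi_{ig}^{\ell}-\varphi_{ig}^{\ell^{\text{as}}})\right),
\]
valid for every $i$. As $\mu$ is a state on $\ell^{\infty}(G)$ it is contractive, so the right-hand side is bounded in absolute value by $\left\Vert i^{-1}(\varphi_{ig}^{\ell}-\varphi_{ig}^{\ell^{\text{as}}})\right\Vert$.

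By the standing hypothesis this operator norm tends to $0$ as $i\rightarrow\infty$, while the left-hand side does not depend on $i$; hence it must vanish, giving $\widehat{\mu}_{\ell}(g)=\widehat{\mu}_{\ell^{\text{as}}}(g)$ for all $g$, as required. The point I would emphasise is that this is precisely where the uniform convergence assumption is indispensable, and it is the only genuine obstacle: an invariant mean on $\ell^{\infty}(G)$ is in general merely norm-continuous and \emph{not} weak-$\ast$ continuous (it is not represented by any element of $\ell^{1}(G)$), so the strong operator convergence furnished by Lemma \ref{StrongConvergence} alone would not allow one to pass the limit through $\mu$. The functional $\mu$ may interact with a limit of operators only through its norm bound, which is exactly what operator-norm convergence guarantees; everything else is the routine bookkeeping recorded above.
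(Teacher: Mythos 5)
Your proof is correct and follows essentially the same route as the paper: both arguments show that every invariant mean $\mu$ satisfies $\widehat{\mu}_{\ell}=\widehat{\mu}_{\ell^{\text{as}}}$ by expressing the difference as $\mu\bigl(i^{-1}(\varphi_{ig}^{\ell}-\varphi_{ig}^{\ell^{\text{as}}})\bigr)$ via the homomorphism property from Lemma \ref{InducedHomomorphism} and then invoking the uniform convergence hypothesis together with contractivity of $\mu$. Your additional remark on why the strong convergence of Lemma \ref{StrongConvergence} would not suffice is a correct and worthwhile observation, but the underlying argument is the paper's.
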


\begin{proof} Let $\mu:\ell^{\infty}(G)\rightarrow\mathbb{C}$ be
an invariant mean. Then,
\[
\widehat{\mu}_{\ell}\circ p_{G}(g)-\widehat{\mu}_{\ell^{\text{as}}}\circ p_{G}(g)=\mu(i^{-1}(\varphi_{ig}^{\ell}-\varphi_{ig}^{\ell^{\text{as}}}))\rightarrow0
\]
for every $g\in G$, and hence, $\widehat{\mu}_{\ell}=\widehat{\mu}_{\ell^{\text{as}}}$.
This implies the claim. \end{proof}

By adding the assumption that the asymptotic semi-norm is positive
definite, we obtain the following much stronger result. Before giving a proof, we pick up some of its implications.

\begin{theorem} \label{StrongerResult} Let $\ell:G\rightarrow\mathbb{R}_{+}$
be a length function on $G=\mathbb{Z}^{m}$, $m\in\mathbb{N}$. Assume
that $i^{-1}(\varphi_{ig}^{\ell}-\varphi_{ig}^{\ell^{\text{as}}})\rightarrow0$
uniformly and that the asymptotic semi-norm associated with
$\ell$ is positive definite. Then $G$ is separated with respect
to $\ell$. \end{theorem}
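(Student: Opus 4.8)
The plan is to reduce to the asymptotic semi-norm and then, for each differentiable direction of the associated norm, to produce an invariant mean whose induced homomorphism is the corresponding gradient functional. Since $G=\mathbb{Z}^m$ is torsion-free and Abelian, the projection $p_G$ is the identity and $\mathrm{Hom}(\mathrm{im}(p_G),\mathbb{R})=\mathrm{Hom}(\mathbb{Z}^m,\mathbb{R})$, which I identify with the dual $(\mathbb{R}^m)^\ast$ via $\xi\mapsto\langle\xi,\,\cdot\,\rangle$. By Proposition \ref{OperatorConvergence}, the uniform convergence hypothesis gives that $(G,\ell)$ is separated if and only if $(G,\ell^{\mathrm{as}})$ is, so it suffices to treat $\ell^{\mathrm{as}}$. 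Writing $N:=\|\cdot\|_\ell$ for the asymptotic norm on $\mathbb{R}^m$ (a genuine norm by positive definiteness), one has $\varphi_g^{\ell^{\mathrm{as}}}(h)=N(h)-N(h-g)$ for $g,h\in\mathbb{Z}^m$, and the goal becomes to show that the functionals $\widehat{\mu}_{\ell^{\mathrm{as}}}$ span $(\mathbb{R}^m)^\ast$ as $\mu$ ranges over invariant means.

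First I would record the behaviour of the cocycle at infinity. Fix a unit vector $v$ at which $N$ is differentiable; by convexity and Rademacher's theorem such $v$ form a dense full-measure subset of the unit sphere. Writing $h=sw$ with $s=|h|$ and $w=h/|h|$, homogeneity gives $\varphi_g^{\ell^{\mathrm{as}}}(h)=s\bigl(N(w)-N(w-g/s)\bigr)$. Bounding this increment above and below by subgradients of $N$ at $w$ and at $w-g/s$, and using that the subdifferential of a convex function is single-valued and continuous at a point of differentiability, one obtains
\[
\varphi_g^{\ell^{\mathrm{as}}}(h)\longrightarrow\langle\nabla N(v),g\rangle,
\]
the convergence being uniform as $h\to\infty$ along any family with $h/|h|\to v$ uniformly.

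Next I would build an invariant mean detecting this direction. Let $F_n:=[-n,n]^m\cap\mathbb{Z}^m$ be the standard Følner sequence, choose $t_n\to\infty$ with $t_n/n\to\infty$, fix a free ultrafilter $\omega$, and set
\[
\mu_v(f):=\lim_{n\to\omega}\frac{1}{\#F_n}\sum_{h\in F_n+\lfloor t_n\rfloor v}f(h),\qquad f\in\ell^\infty(\mathbb{Z}^m),
\]
where $\lfloor t_n\rfloor v$ is rounded to a lattice point. Each averaging functional is a state, so $\mu_v$ is a state; invariance follows because translating the index set by $k\in\mathbb{Z}^m$ changes it by a set of relative size $\#((F_n-k)\triangle F_n)/\#F_n\to0$ while $f$ stays bounded. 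Since $t_n/n\to\infty$ forces $h/|h|\to v$ uniformly over $h\in F_n+\lfloor t_n\rfloor v$, the previous paragraph yields $\mu_v(\varphi_g^{\ell^{\mathrm{as}}})=\langle\nabla N(v),g\rangle$, that is $\widehat{(\mu_v)}_{\ell^{\mathrm{as}}}=\langle\nabla N(v),\,\cdot\,\rangle$.

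Finally I would invoke convex geometry: the gradients $\nabla N(v)$, for $v$ a differentiability point, are exactly the exposed points of the dual unit ball $B^\ast=\{N^\ast\le1\}$. By Straszewicz's theorem these are dense in the extreme points of $B^\ast$, and since $B^\ast$ is a convex body with non-empty interior (as $N$ is a genuine norm) its extreme points span $\mathbb{R}^m$; hence so do the $\nabla N(v)$. Consequently the functionals $\widehat{(\mu_v)}_{\ell^{\mathrm{as}}}=\langle\nabla N(v),\,\cdot\,\rangle$ span $(\mathbb{R}^m)^\ast=\mathrm{Hom}(\mathrm{im}(p_G),\mathbb{R})$, so by Definition \ref{SeparatedDefinition} the pair $(G,\ell^{\mathrm{as}})$ — and therefore $(G,\ell)$ — is separated. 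I expect the main obstacle to be the middle step: one must reconcile the invariance of $\mu_v$, which demands averaging over large Følner sets, with its sensitivity to the single boundary direction $v$, which demands escaping to infinity; this is precisely what the scale separation $t_n/n\to\infty$ together with the uniform directional limit of the cocycle achieves, the continuity of the subdifferential at differentiability points being what makes that limit uniform.
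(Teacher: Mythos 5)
Your argument is correct, and it reaches the same conclusion by a genuinely different route in the two key steps. After the common reduction to $\ell^{\text{as}}$ via Proposition \ref{OperatorConvergence}, the paper does not construct the invariant means by hand: for each smooth point $v$ of the unit sphere of $\left\Vert \cdot\right\Vert _{\ell}$ it invokes \cite[Proposition 7.4]{Rieffel02} to produce a $G$-fixed Busemann point $\mathfrak{b}_{v}^{\prime}\in\partial_{\ell^{\text{as}}}G$ with $\varphi_{g}^{\ell^{\text{as}}}(\mathfrak{b}_{v}^{\prime})=\sigma_{v}(g)$, turns evaluation at $\mathfrak{b}_{v}^{\prime}$ into a $G$-invariant character on $C(\overline{G}^{\ell^{\text{as}}})$, and then passes through the conditional expectation $C(\overline{G}^{\ell^{\text{as}}})\rtimes_{\beta,r}G\rightarrow C_{r}^{\ast}(G)$ composed with the trivial character (amenability plus the multiplicative-domain argument) to extract an invariant mean $\mu_{v}$ with $\widehat{(\mu_{v})}_{\ell^{\text{as}}}=\sigma_{v}|_{G}$; the final spanning step is delegated to \cite[Proposition 6.7]{Rieffel02}. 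You instead build $\mu_{v}$ directly as an ultrafilter limit of averages over F{\o}lner boxes translated to infinity in the direction $v$ at a faster scale, compute $\mu_{v}(\varphi_{g}^{\ell^{\text{as}}})=\langle\nabla N(v),g\rangle$ by squeezing the increment $N(h)-N(h-g)$ between subgradients and using single-valuedness and upper semicontinuity of the subdifferential at a differentiability point (note $\nabla N(v)$ is exactly the paper's tangent functional $\sigma_{v}$), and you replace the citation in the spanning step by the Straszewicz argument on exposed points of the dual ball. Your route is more elementary and self-contained: it avoids the almost-geodesic-ray and Kronecker-type constructions behind Rieffel's fixed Busemann points and the crossed-product detour entirely. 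What the paper's route buys is coherence with the horofunction framework that organizes the rest of the article (the means arise from genuine fixed points of the boundary action and are multiplicative on $C(\overline{G}^{\ell^{\text{as}}})$, which is the picture later exploited for nilpotent groups); your means need not be multiplicative, but that is not required by Definition \ref{SeparatedDefinition}. The only points worth writing out carefully are the ones you already flag: that $\partial N(h)=\partial N(h/|h|)$ by homogeneity, so the uniform directional limit really only needs $h/|h|\rightarrow v$ and $|h|\rightarrow\infty$ uniformly over the translated boxes, which the scale separation $t_{n}/n\rightarrow\infty$ provides.
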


Theorem \ref{StrongerResult} applies to many natural situations.
If, for instance, $\ell$ is a word length function, it is easy to show
that the map $G\rightarrow\mathbb{R}_{+}$, $g\mapsto\left|\ell(g)-\left\Vert g\right\Vert _{\ell}\right|$
is bounded (see, for example, \cite[Lemma 3.5]{DLM12}), and hence, $i^{-1}(\varphi_{ig}^{\ell}-\varphi_{ig}^{\ell^{\text{as}}})\rightarrow0$
uniformly. The proof of the following more general statement
relies on the results in \cite{LOZ21}, which are again in the spirit
of Burago's approach in \cite{Burago}. Recall that an action of a
group $G$ on a metric space $(X,d)$ is called \emph{cocompact}
if the quotient space $X/G$ is compact. It is called \emph{properly
discontinuous} if each point admits a neighborhood satisfying the property
that all non-trivial elements of $G$ move the neighborhood outside
itself. The metric space $(X,d)$ is \emph{geodesic} if, given two
points, there exists a path between them whose length equals the distance
between the points. Here, the length of a path $c:[0,1]\rightarrow X$
is defined as the infimum over all sums $\sum_{i=1}^{k}d(c(t_{i-1}),c(t_{i}))$,
where $0\leq t_{0}\leq t_{1}...\le t_{k}\leq1$.

\begin{corollary} \label{OrbitMetric} Let $(X,d)$ be a proper, geodesic
metric space on which $\mathbb{Z}^{m}$, $m \in \mathbb{N}$ acts freely, cocompactly, and
properly discontinuously by isometries. Assume that there exists a
continuous map $F:X\rightarrow\mathbb{R}^{m}$ that is equivariant
with respect to the canonical shift action $\mathbb{Z}^{m}\curvearrowright\mathbb{R}^{m}$
and let $x_{0}\in X$. Define $\ell:\mathbb{Z}^{m}\rightarrow\mathbb{R}_{+}$
by $\ell(g):=d(x_{0},g.x_{0})$ for $g\in\mathbb{Z}^{m}$, where $x_{0}\in X$.
Then $\ell$ is a proper length function, and $G$ is separated with
respect to $\ell$.

In particular, if $G$ is a discrete group finitely generated by a
set $S$ with $S=S^{-1}$ that contains $\mathbb{Z}^{m}$, $m \in \mathbb{N}$ as a finite
index normal subgroup, then $\mathbb{Z}^m$ is separated with respect to the
restricted word length function $\ell_{S}|_{\mathbb{Z}^{m}}$. \end{corollary}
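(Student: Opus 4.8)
The plan is to verify the hypotheses of Theorem \ref{StrongerResult} for $\ell$ and then to obtain the second assertion as a special case of the first. First I would check that $\ell(g):=d(x_{0},g.x_{0})$ is a proper length function: symmetry $\ell(g^{-1})=\ell(g)$ and the triangle inequality $\ell(gh)\le\ell(g)+\ell(h)$ are immediate from the fact that $\mathbb{Z}^{m}$ acts by isometries, while $\ell(g)=0\Leftrightarrow g=e$ follows from freeness of the action; properness, i.e. finiteness of each set $\{g\mid\ell(g)\le r\}$, follows from proper discontinuity together with the properness of $(X,d)$. Since the action is in addition cocompact and $(X,d)$ is geodesic, the Milnor--\v{S}varc lemma shows that the orbit map $\mathbb{Z}^{m}\to X$, $g\mapsto g.x_{0}$ is a quasi-isometry, so $\ell$ is bi-Lipschitz equivalent to a word length function on $\mathbb{Z}^{m}$. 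As explained in Subsection \ref{subsec:IntegerLattices}, this forces the associated asymptotic semi-norm $\|\cdot\|_{\ell}$ to be positive definite.

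The crux, and the step I expect to be the main obstacle, is establishing that $i^{-1}(\varphi_{ig}^{\ell}-\varphi_{ig}^{\ell^{\text{as}}})\to0$ uniformly. I would reduce this to the statement that $\mathbb{Z}^{m}\to\mathbb{R}_{+}$, $g\mapsto|\ell(g)-\|g\|_{\ell}|$ is bounded, say by some $M$. Indeed, writing $\mathbb{Z}^{m}$ additively and using that $\varphi_{ig}^{\ell}$ and $\varphi_{ig}^{\ell^{\text{as}}}$ are multiplication operators with
\[
(\varphi_{ig}^{\ell}-\varphi_{ig}^{\ell^{\text{as}}})(h)=\bigl(\ell(h)-\|h\|_{\ell}\bigr)-\bigl(\ell(h-ig)-\|h-ig\|_{\ell}\bigr),
\]
such a bound yields $\|i^{-1}(\varphi_{ig}^{\ell}-\varphi_{ig}^{\ell^{\text{as}}})\|\le 2M/i\to0$, since for multiplication operators the operator norm is the supremum of the absolute values of the symbol. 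The boundedness of $g\mapsto|\ell(g)-\|g\|_{\ell}|$ is precisely a Burago-type comparison between the orbit metric and its stable norm; this is where the geodesic and cocompactness hypotheses and, crucially, the continuous equivariant map $F:X\to\mathbb{R}^{m}$ enter, and I would obtain it by invoking the results of \cite{LOZ21} (in the spirit of \cite{Burago}). This is the genuinely geometric input; the remaining pieces are soft.

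With positive definiteness of $\|\cdot\|_{\ell}$ and the uniform convergence in hand, Theorem \ref{StrongerResult} gives at once that $G=\mathbb{Z}^{m}$ is separated with respect to $\ell$, proving the first assertion. For the second assertion I would realize the restricted word length function as an instance of the first. Take $X$ to be the Cayley graph of $G$ with respect to $S$, metrized so that each edge has length one; this is a proper geodesic space on which $G$, and hence $\mathbb{Z}^{m}\le G$, acts by left multiplication through isometries. Since $[G:\mathbb{Z}^{m}]<\infty$ the action is cocompact, and it is clearly free and properly discontinuous. Choosing left coset representatives $t_{1},\dots,t_{k}$ for the normal subgroup $\mathbb{Z}^{m}$ and writing each $g\in G$ uniquely as $g=z\,t_{j}$ with $z\in\mathbb{Z}^{m}$, the assignment $z\,t_{j}\mapsto z$ defines a $\mathbb{Z}^{m}$-equivariant map on the vertex set which extends affinely over the edges to a continuous equivariant map $F:X\to\mathbb{R}^{m}$. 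Taking $x_{0}=e$, the orbit length function $g\mapsto d(e,g.e)$ restricted to $\mathbb{Z}^{m}$ is exactly $\ell_{S}|_{\mathbb{Z}^{m}}$, so the first assertion applies and $\mathbb{Z}^{m}$ is separated with respect to $\ell_{S}|_{\mathbb{Z}^{m}}$.
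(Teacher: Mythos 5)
Your proposal is correct and follows essentially the same route as the paper: positive definiteness of $\|\cdot\|_{\ell}$ via Milnor--\v{S}varc (the paper cites \cite[Theorem 8.3.19]{BBI01}), the uniform convergence $i^{-1}(\varphi_{ig}^{\ell}-\varphi_{ig}^{\ell^{\text{as}}})\to 0$ reduced to the Burago-type bound $\sup_{g}|\ell(g)-\|g\|_{\ell}|<\infty$ obtained from \cite{LOZ21} (the paper makes this explicit via \cite[Lemma 20]{LOZ21} and Fekete's lemma), and then Theorem \ref{StrongerResult}. Your treatment of the second assertion via the Cayley graph and the piecewise-affine equivariant extension of $zt_{j}\mapsto z$ is the same construction the paper carries out directly on $(G,d_{\ell_{S}})$.
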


\begin{proof} For the first statement, recall that Fekete's Subadditivity
Lemma implies $\ell^{\text{as}}(h)\leq\ell(h)$ for all $h\in\mathbb{Z}^{m}$.
By \cite[Lemma 20]{LOZ21}, there further exists a constant $C\geq0$
such that $2\ell(h)\leq\ell(2h)+C$ for every $h\in\mathbb{Z}^{m}$.
Inductively, we obtain that
\begin{eqnarray}
\nonumber
\ell(h)\leq\frac{\ell(2h)}{2}+C\leq\frac{\ell(4h)}{4}+\frac{3C}{2}\leq...\leq\frac{\ell(2^{i}h)}{2^{i}}+\left(2-\frac{1}{2^{i-1}}\right)C
\end{eqnarray}
for all $h\in G$, $i\in\mathbb{N}$, and therefore, $\ell(h)\leq\ell^{\text{as}}(h)+2C$.
But then
\begin{eqnarray}
\nonumber
\left\Vert \frac{\varphi_{ig}^{\ell}-\varphi_{ig}^{\ell^{\text{as}}}}{i}\right\Vert \leq \sup_{h\in\mathbb{Z}^{m}}\left\{ \left|\frac{\ell(h)-\ell^{\text{as}}(h)}{i}\right|+\left|\frac{\ell(h-ig)-\ell^{\text{as}}(h-ig)}{i}\right|\right\} \leq\frac{4C}{i}\rightarrow0
\end{eqnarray}
for $g\in G$.

The asymptotic semi-norm associated with $\ell$ is further
positive definite. Indeed, by \cite[Theorem 8.3.19]{BBI01}, the metric
$(g,h)\mapsto d(g.x_{0},h.x_{0})$ on $G$ is bi-Lipschitz equivalent
to a word metric, and thus, $\ell$ is bi-Lipschitz equivalent to a
word length function. Therefore, by our discussion above, $\left\Vert \cdot\right\Vert _{\ell}$
is positive definite and $\ell$ is proper. We deduce the statement
of the first part of the corollary by invoking Theorem \ref{StrongerResult}. \\

For the second statement, we argue as in \cite[Corollary 23]{LOZ21}.
Consider $G$ equipped with the word length metric $d_{\ell_S}$. Then
$(G,d_{\ell_S})$ is a proper geodesic metric space, and the action of $\mathbb{Z}^{m}$
on $G$ via left translation is free, cocompact, and properly discontinuous.
Choose elements $g_{1},...,g_{k}\in G$ with $G=\bigcup_{i=1}^{k}\mathbb{Z}^{m}g_{i}$
and $\mathbb{Z}^{m}g_{i}\neq\mathbb{Z}^{m}g_{j}$ for $i\neq j$ and
define $F:G\rightarrow\mathbb{R}^{m}$ via $F(hg_{i}):=F(g_{i})+h$
for $h\in\mathbb{Z}^{m}$, $1\leq i\leq k$, where $F(g_{1}),...,F(g_{k})\in\mathbb{R}^{m}$
are chosen arbitrarily. Then $F$ satisfies the conditions of the
first part of the corollary, and hence, $G$ is separated with respect
to the restricted word length function $\ell_{S}|_{\mathbb{Z}^{m}}$.
\end{proof}

With Theorem \ref{MainTheorem} and Theorem \ref{MainTheorem2} at hand, the Corollary \ref{OrbitMetric} implies
the following important fact. Of course, the statement holds for all
orbit metrics as in Corollary \ref{OrbitMetric}.

\begin{corollary} \label{VirtuallyAbelian} The following two statements hold:
\begin{enumerate}
\item Let $(\mathcal{A},\mathcal{H}_{A},D_{A})$
be a non-degenerate odd spectral triple on a separable unital C$^{\ast}$-algebra
$A$ and assume that the induced Lipschitz semi-norm $L_{D_{A}}(a):=\left\Vert [D_{A},a]\right\Vert ,a\in\mathcal{A}$
defines a compact quantum metric space $(A,L_{D_{A}})$. Let further
$\alpha:G\rightarrow\text{Aut}(A)$ be a metrically equicontinuous
action of a virtually Abelian discrete group $G$ that is finitely
generated by a set $S$ with $S=S^{-1}$ and let $\ell:G\rightarrow\mathbb{R}_{+}$
be the corresponding word length function. Define $\mathcal{H}:=\mathcal{H}_{A}\otimes\ell^{2}(G)$
and $D$ as in Subsection \ref{subsec:Crossed-product-C-algebras}.
Then the even spectral triple $(C_{c}(G,\mathcal{A}),\mathcal{H}\oplus\mathcal{H},D)$
is a spectral metric space.
\item Let
\[
\left(\mathcal{A},\mathcal{H}_{A,1}\oplus\mathcal{H}_{A,2},\left(\begin{array}{cc}
0 & D_{A}\\
D_{A}^{\ast} & 0
\end{array}\right)\right)
\]
be a non-degenerate even spectral triple on a separable unital C$^{\ast}$-algebra
$A$ with $\mathbb{Z}_{2}$-grading $\mathcal{H}_{A}:=\mathcal{H}_{A,1}\oplus\mathcal{H}_{A,2}$
and corresponding representation $\pi:=\pi_{1}\oplus\pi_{2}$, and assume that the induced Lipschitz semi-norm $L_{D_{A}}(a):=\left\Vert [D_{A},a]\right\Vert ,a\in\mathcal{A}$
defines a compact quantum metric space $(A,L_{D_{A}})$. Let further
$\alpha:G\rightarrow\text{Aut}(A)$ be a metrically equicontinuous
action of a virtually Abelian discrete group $G$ that is finitely
generated by a set $S$ with $S=S^{-1}$ and let $\ell:G\rightarrow\mathbb{R}_{+}$
be the corresponding word length function. Define $\mathcal{H}:=(\mathcal{H}_{A,1}\otimes\ell^{2}(G))\oplus(\mathcal{H}_{A,2}\otimes\ell^{2}(G))$
and $D$ as in Subsection \ref{ConverseCase}.
Then the odd spectral triple $(C_{c}(G,\mathcal{A}),\mathcal{H},D)$
is a spectral metric space.
\end{enumerate}
 \end{corollary}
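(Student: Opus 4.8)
The plan is to deduce both statements as direct applications of the ``in particular'' clauses of Theorem~\ref{MainTheorem} (for part~(1)) and Theorem~\ref{MainTheorem2} (for part~(2)), feeding into them the separateness result from Corollary~\ref{OrbitMetric}. Every hypothesis of those theorems is already built into the statement of the present corollary, with the single exception of the existence of a finite index subgroup $H\leq G$ that is separated with respect to $\ell|_{H}$ and whose commutator subgroup $[H,H]$ is finite. Since the group-theoretic input required to produce such an $H$ is the same in both cases, I would establish it once and only branch between the odd-to-even and even-to-odd theorems at the final step.

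First I would produce a finite index \emph{normal} subgroup of $G$ that is isomorphic to some $\mathbb{Z}^{m}$. As $G$ is virtually Abelian it contains a finite index Abelian subgroup $B$, and being a finite index subgroup of the finitely generated group $G$, the group $B$ is itself finitely generated; hence $B\cong\mathbb{Z}^{m}\times T$ with $T$ finite. The free part $\mathbb{Z}^{m}\leq B$ has index $\#T$ in $B$ and therefore finite index in $G$. Passing to its normal core $H:=\bigcap_{g\in G}g\mathbb{Z}^{m}g^{-1}$ yields a finite index normal subgroup of $G$; being a subgroup of $\mathbb{Z}^{m}$ it is free Abelian, and since it has finite index in $\mathbb{Z}^{m}$ its rank is again $m$, so $H\cong\mathbb{Z}^{m}$.

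With $H$ in hand, the two required properties are immediate. Because $H$ is Abelian its commutator subgroup $[H,H]$ is trivial, in particular finite. For separateness I would invoke the second statement of Corollary~\ref{OrbitMetric}: the group $G$ is finitely generated by the symmetric set $S$, it contains $H\cong\mathbb{Z}^{m}$ as a finite index normal subgroup, and $\ell=\ell_{S}$ is the associated word length function, whence $H$ is separated with respect to $\ell_{S}|_{H}=\ell|_{H}$. Finally $\ell$ is proper, as balls of a word length function on a finitely generated group are finite. All hypotheses of Theorem~\ref{MainTheorem} (respectively Theorem~\ref{MainTheorem2}) are thus met, and its ``in particular'' clause gives that the even spectral triple $(C_{c}(G,\mathcal{A}),\mathcal{H}\oplus\mathcal{H},D)$ (respectively the odd spectral triple $(C_{c}(G,\mathcal{A}),\mathcal{H},D)$) satisfies the Lipschitz condition.

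The argument is essentially a bookkeeping exercise once the two main theorems and Corollary~\ref{OrbitMetric} are available; the only genuinely non-automatic point is the passage to a \emph{normal} free Abelian subgroup. This step cannot be skipped, since the second part of Corollary~\ref{OrbitMetric} is phrased for a normal copy of $\mathbb{Z}^{m}$, and it is precisely the normal core construction, together with the fact that finite index subgroups of $\mathbb{Z}^{m}$ are again free Abelian of full rank, that supplies the subgroup $H$ on which everything rests.
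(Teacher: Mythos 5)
Your proposal is correct and follows exactly the route the paper intends: the corollary is stated as an immediate consequence of Theorem~\ref{MainTheorem}, Theorem~\ref{MainTheorem2} and Corollary~\ref{OrbitMetric}, and you supply precisely the missing group-theoretic bookkeeping (finitely generated Abelian finite-index subgroup, splitting off the torsion, passing to the normal core to get a normal copy of $\mathbb{Z}^{m}$ with trivial commutator subgroup). The only detail worth noting is that the paper leaves this verification implicit, so your write-up is, if anything, more complete than the source.
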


Let us now turn to the proof of Theorem \ref{StrongerResult}. Our
argument requires Rieffel's construction in \cite[Section 7]{Rieffel02}.
For a given norm $\left\Vert \cdot\right\Vert $ on $\mathbb{R}^{m}$,
write $\ell_{\Vert\cdot\Vert}$ for its restriction to $\mathbb{Z}^{m}$;
this defines a length function that is bi-Lipschitz equivalent to
the word length function $\ell_1$ from before. Let $v\in\mathbb{R}^{m}$ with $\left\Vert v\right\Vert =1$
be a \emph{smooth point} in the sense that there exists exactly one
functional $\sigma_{v}$ on $\mathbb{R}^{m}$ with $ \Vert \sigma_{v} \Vert =1=\sigma_{v}(v)$;
for the background on tangent functionals, see \cite[Section V.9]{DunfordSchwartz}.
Then the geodesic ray $\mathbb{R}_{+}\ni t\mapsto tv$ determines
a Busemann point $\mathfrak{b}_{v}$ in the horofunction boundary
$\partial_{\Vert\cdot\Vert}\mathbb{R}^{m}$, and by \cite[Proposition 6.2]{Rieffel02}
and \cite[Proposition 6.3]{Rieffel02}, this point is fixed under the
action of $\mathbb{R}^{m}$ with $\Vert tv \Vert - \Vert tv-x \Vert \rightarrow \sigma_v(x)$
for every $x\in\mathbb{R}^{m}$. By invoking a variation of Kronecker's
theorem (see \cite[Lemma 7.2]{Rieffel02}), one finds an unbounded
strictly increasing sequence $(t_{i})_{i\in\mathbb{N}_{\geq1}}\subseteq\mathbb{R}_{+}$
such that for every $i\in\mathbb{N}_{\geq1}$, there exists $x_{i}\in\mathbb{Z}^{m}$
with $\left\Vert x_{i}-t_{i}v\right\Vert <i^{-1}$. Set $t_{0}:=0$,
$x_{0}:=0$, and define $\gamma:\{t_{i}\mid i\in\mathbb{N}\}\rightarrow\mathbb{Z}^{m}$
by $\gamma(t_{i}):=x_{i}$. Then $\gamma$ is an almost geodesic ray
that determines a Busemann point $\mathfrak{b}_{v}^{\prime}\in\partial_{\ell_{\Vert\cdot\Vert}}\mathbb{Z}^{m}$.
By \cite[Proposition 7.4]{Rieffel02}, this point is fixed under the
action of $\mathbb{Z}^{m}$ and satisfies $\varphi_{g}^{\ell_{\Vert\cdot\Vert}}(\mathfrak{b}_{v}^{\prime})=\sigma_{v}(g)$
for every $g\in\mathbb{Z}^{m}$.

\begin{proof}[{Proof of Theorem \ref{StrongerResult}}] Following
Proposition \ref{OperatorConvergence}, it suffices to show that $G$
is separated with respect to the length function $\ell^{\text{as}}$. As $\ell^{\text{as}}$ is the
restriction of the asymptotic (semi-)norm $\left\Vert \cdot\right\Vert _{\ell}$,
we may apply \cite[Proposition 7.4]{Rieffel02} to find for every
smooth point $v\in\mathbb{R}^{m}$ (with respect to the asymptotic semi-norm) with $\left\Vert v\right\Vert _{\ell}=1$
a point $\mathfrak{b}_{v}^{\prime}\in\partial_{\ell^{\text{as}}}G$ that
is fixed under the action of $G$ with $\varphi_{g}^{\ell^{\text{as}}}(\mathfrak{b}_{v}^{\prime})=\sigma_{v}(g)$
for every $g\in G$. Evaluation in $\mathfrak{b}_{v}^{\prime}$ leads
to a (multiplicative) $G$-invariant state $\nu_{v}$ on $C(\overline{G}^{\ell^{\text{as}}})$.
Further, by the amenability of $G$, the linear map $\chi:C_{r}^{\ast}(G)\rightarrow\mathbb{C}$
defined by $\chi(\lambda_{g}):=1$ for $g\in G$ is bounded and multiplicative
(see \cite[Theorem 2.6.8]{BrownOzawa}). Recall that Proposition \ref{Isomorphism-1} (in combination with Fell's absorption principle; see \cite[Proposition 4.1.7]{BrownOzawa})
provides a canonical identification of $C(\overline{G}^{\ell^{\text{as}}})\rtimes_{\beta,r}G$
with the C$^{\ast}$-subalgebra of $\mathcal{B}(\ell^{2}(G))$ generated
by $C_{r}^{\ast}(G)$, $C_{0}(G)$ and the multiplication operators
$\{\varphi_{g}^{\ell^{\text{as}}}\mid g\in G\}$. Via composing $\chi$ with
the conditional expectation $C(\overline{G}^{\ell^{\text{as}}})\rtimes_{\beta,r}G\rightarrow C_{r}^{\ast}(G)$,
$f\lambda_{g}\mapsto\nu_{v}(f)\lambda_{g}$ for $f\in C(\overline{G}^{\ell^{\text{as}}})$,
$g\in G$ and extending to $\mathcal{B}(\ell^{2}(G))$, we hence obtain
a state that contains $C_{r}^{\ast}(G)$ in its multiplicative domain
(see \cite[Proposition 1.5.7]{BrownOzawa}). It is thus invariant
under the canonical action of $G$ and restricts to an invariant mean
$\mu_{v}:\ell^{\infty}(G)\rightarrow\mathbb{C}$ with $\widehat{(\mu_{v})}_{\ell^{\text{as}}}=\sigma_{v}|_{G}$.
To conclude the statement from the theorem it hence suffices to prove
that the span of all $\sigma_{v}|_G$ where $v\in\mathbb{R}^{m}$ is
a smooth point of the unit sphere (with respect to $\Vert \cdot \Vert_\ell$), coincides with $\text{Hom}(G,\mathbb{R})$.
For this purpose assume that the complement of the span is non-empty
and denote the canonical orthonormal basis of $\mathbb{R}^{m}$ by
$(e_{i})_{i=1,...,m}$. Then there exists a non-trivial vector $\xi$
in the orthogonal complement (with respect to the canonical inner
product on $\mathbb{R}^{m}$) of 
\[
\text{Span}\{(\sigma_{v}(e_{i}))_{i=1,...,m}\in\mathbb{R}^{m}\mid v\in\mathbb{R}^{m}\text{ with }\left\Vert v\right\Vert _{\ell}=1\text{ smooth point}\}\subseteq\mathbb{R}^{m}.
\]
But this means that $\sigma_{v}(\xi)=0$ for all smooth points $v\in\mathbb{R}^{m}$
of the unit sphere. With \cite[Proposition 6.7]{Rieffel02} we conclude
that $\xi=0$ in contradiction to our assumption that $\xi$ is non-trivial.
\end{proof}

\vspace{3mm}


\subsection{Nilpotent groups}

Besides constructing fixed points in the horofunction boundary
of $\mathbb{Z}^{m}$, $m\in\mathbb{N}$ associated with length functions
that are restrictions of norms on $\mathbb{R}^{m}$, in \cite[Section 8]{Rieffel02},
Rieffel also constructed finite orbits of horofunction boundaries
of $\mathbb{Z}^{m}$ associated with word length functions. The investigation
of such points was later extended by Walsh in \cite{Walsh07} to nilpotent
groups. He proved that for a given nilpotent group $G$ finitely generated
by a set $S$ with $S=S^{-1}$, there is one finite orbit associated
with each facet of the polytope obtained by projecting $S$ onto the
torsion-free component of the Abelianization of $G$. The aim of this
subsection is to discuss the implications of Walsh's results in our context.

Let us review the construction in \cite{Walsh07} in more detail. The map $p_{G}$
from before gives a group homomorphism $G\rightarrow\mathbb{Z}^{m}$,
where $m$ is the rank of $G/[G,G]$. Again, view $\mathbb{Z}^{m}$
as embedded into $\mathbb{R}^{m}$ and consider the convex hull $K_{S}:=\text{conv}(p_{G}(S))$.
The set $K_{S}$ defines a polytope in $\mathbb{R}^{m}$. Its proper
faces of co-dimension $1$ are called \emph{facets}. For such a facet
$F$, consider the subset $V_{F}:=\{s\in S\mid p_{G}(s)\in F\}$ of
$S$ and write $\left\langle V_{F}\right\rangle $ for the (nilpotent)
subgroup of $G$ generated by $V_{F}$. This subgroup has finite index
in $G$. Further, by \cite[Section 4]{Walsh07}, one finds a word $w_{F}$
with letters in $V$ such that the infinite reduced word $w_{F}w_{F}...$
defines a geodesic path in the Cayley graph of $G$ with respect to
$S$ in the sense that each of the word's prefixes are geodesic with
respect to the word metric $d_{\ell_S}$. By Theorem \ref{Busemann}, this geodesic
path gives a Busemann point $\xi_{F}\in\partial_{\ell_{S}}G$ in the
horofunction boundary of $G$. The stabilizer of $\xi_{F}$ is given
by $\left\langle V_{F}\right\rangle $. But even more is true.

\begin{theorem}[{\cite[Theorem 1.1]{Walsh07}}] Let $G$ be a nilpotent
group with finite generating set $S=S^{-1}$ and consider the action
of $G$ on its horofunction boundary with respect to the corresponding
word length metric. Then there exists a natural one-to-one correspondence
between the finite orbits of Busemann points and the facets of $K_{S}$.
\end{theorem}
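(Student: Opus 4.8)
The construction preceding the statement already supplies one half of the correspondence: to each facet $F$ it attaches the Busemann point $\xi_F\in\partial_{\ell_S}G$ determined by the periodic geodesic $w_Fw_F\cdots$, whose stabilizer $\langle V_F\rangle$ has finite index in $G$, so that the orbit $G\cdot\xi_F$ is finite. The plan is to prove that $\Psi\colon F\mapsto G\cdot\xi_F$ is a bijection onto the finite orbits of Busemann points. The organizing invariant will be the \emph{asymptotic direction}: writing $\Vert\cdot\Vert_{\ell_S}$ for the asymptotic norm that the word length induces on the abelianized space $\mathbb{R}^m$ (classically, going back to Pansu, its unit ball is exactly $K_S$), I would attach to a weakly geodesic ray $\gamma$ the limit $v_\gamma:=\lim_{t\to\infty}t^{-1}p_G(\gamma(t))$ and record the unique face of $K_S$ containing $v_\gamma$ in its relative interior. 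Write $\Phi$ for the resulting, a priori only partially defined, assignment from Busemann points to faces of $K_S$.

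The first and easy step is to pin down the elementary behaviour of $\Phi$. Whenever $v_\gamma$ exists it is a $G$-orbit invariant, because $p_G(g\gamma(t))=p_G(g)+p_G(\gamma(t))$ shows that replacing $\gamma$ by $g\gamma$ leaves the limit unchanged. Moreover, for the ray defining $\xi_F$ the limit plainly exists and equals $p_G(w_F)/\ell_S(w_F)$, which by the choice of $w_F$ lies in the relative interior of $F$, so $\Phi(G\cdot\xi_F)=F$. Since distinct facets have disjoint relative interiors, this already forces the orbits $G\cdot\xi_F$ to be pairwise distinct, giving injectivity of $\Psi$.

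The substance of the theorem, and the step I expect to be the main obstacle, is to show that $\Phi$ is defined on every finite orbit and takes values in the \emph{facets}. The decisive mechanism is a description of stabilizers: the stabilizer of a Busemann point whose asymptotic direction lies in the relative interior of a face $F'$ should be commensurable with $\langle V_{F'}\rangle$, and $\langle V_{F'}\rangle$ has finite index in $G$ exactly when the vectors $p_G(s)$, $s\in V_{F'}$, span $\mathbb{R}^m$. Since $S=S^{-1}$ forces $0$ into the interior of $K_S$, the face $F'$ lies on a supporting hyperplane avoiding the origin, and its vertices span $\mathbb{R}^m$ precisely when $\dim F'=m-1$, that is precisely when $F'$ is a facet. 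In this way the finiteness of the orbit rules out the lower-dimensional and ``central'' directions, where $v_\gamma$ would lie on a face of codimension at least two or equal $0$: these yield stabilizers of infinite index and hence infinite orbits. Proving the stabilizer description is exactly where the combinatorics of geodesic words in nilpotent groups from \cite[Section 4]{Walsh07} enters; by Theorem \ref{Busemann} every boundary point comes from a weakly geodesic ray, and one must show that such a ray with direction in $\mathrm{relint}(F)$ is, at large scale, asymptotically periodic with a period spelled out of the letters of $V_F$, so that its horofunction is governed by $\langle V_F\rangle$.

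It then remains to upgrade the inclusion ``$\Phi$ lands in facets'' to a genuine two-sided inverse of $\Psi$ by establishing uniqueness within a fixed facet: any two finite-orbit Busemann points with a common asymptotic direction $v\in\mathrm{relint}(F)$ must lie in the single orbit $G\cdot\xi_F$. Their shared first-order datum, the supporting functional $\sigma_F$ of $K_S$ along $F$, fixes the horofunction to leading order, while the asymptotic periodicity extracted in the previous step rigidifies the remaining sublinear nilpotent data and matches the two points up to the $G$-action. Combining these steps shows that $\Phi$ restricts to a bijection between finite orbits and facets, inverse to $\Psi$. I expect the genuinely technical heart to be the stabilizer and asymptotic-periodicity analysis of geodesics, which is where the nilpotent rather than merely amenable hypothesis becomes essential, and where one must control the delicate sublinear commutator corrections so that they create no extra finite orbits inside a fixed facet.
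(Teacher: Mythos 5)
This statement is quoted verbatim from \cite[Theorem 1.1]{Walsh07}; the paper offers no proof of it, only the surrounding construction of the points $\xi_F$ and their stabilizers $\langle V_F\rangle$. So there is no internal argument to compare yours against, and your proposal has to be judged as a standalone proof of Walsh's theorem. As such it is an outline with the hard content missing rather than a proof. The easy half (each facet $F$ yields a finite orbit $G\cdot\xi_F$, and distinct facets yield distinct orbits because the asymptotic directions lie in disjoint relative interiors) is fine, granting that the limit $v_\gamma$ exists for the periodic rays $w_Fw_F\cdots$. But everything that makes the theorem a theorem is asserted rather than established: (i) that an arbitrary weakly geodesic ray representing a finite-orbit Busemann point even has a well-defined asymptotic direction $v_\gamma$ (a priori the normalized projections could fail to converge, and you only flag $\Phi$ as ``partially defined'' without showing it is defined on the finite orbits); (ii) the stabilizer commensurability claim, which you explicitly defer to \cite[Section 4]{Walsh07} --- circular if the goal is to prove Walsh's theorem; and (iii) the uniqueness statement that a facet carries exactly \emph{one} finite orbit. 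Point (iii) is the genuine heart of the matter: a single direction in $\mathrm{relint}(F)$ is compatible with a whole family of Busemann points differing by sublinear (commutator) data, and ``rigidifies the remaining sublinear nilpotent data and matches the two points up to the $G$-action'' names the difficulty without addressing it. Nothing you write rules out two inequivalent finite orbits over the same facet.

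One smaller caution: your criterion that $\langle V_{F'}\rangle$ has finite index in $G$ iff $p_G(V_{F'})$ spans $\mathbb{R}^m$ iff $F'$ is a facet silently uses the fact that a subgroup of a finitely generated nilpotent group has finite index iff its image in the abelianization does; that is true but should be stated, since it is precisely where nilpotency (rather than amenability) enters your index computation. Overall the architecture is consistent with the known picture (it matches Rieffel's $\mathbb{Z}^m$ analysis in \cite[Section 8]{Rieffel02} and the shape of Walsh's argument), but as written the proposal reduces the theorem to the very lemmas of \cite{Walsh07} it is supposed to prove.
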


Let $\mathcal{F}$ be the (finite) set of facets of $K_{S}$. Similar
to \cite[Section 8]{Rieffel02} (and similar to Subsection \ref{subsec:IntegerLattices}),
every facet $F\in\mathcal{F}$ of $K_{S}$ is characterized by the
fact that there exists a (unique) linear functional $\sigma_{F}$
on $\mathbb{R}^{m}$ with $\sigma_{F}\circ p_G(s)\leq1$ for all $s\in S$
and $F=\text{conv}(\{p_{G}(s)\mid s\in S\text{ with }\sigma_{F}\circ p_{G}(s)=1\})$.
Rieffel calls this the \emph{support functional} of $F$.

\begin{lemma} \label{EveluationIdentity} For every $F\in\mathcal{F}$
and $h\in\left\langle V_{F}\right\rangle $, the equality $\varphi_{h}^{\ell_{S}}(\xi_{F})=\sigma_{F}\circ p_{G}(h)$
holds. \end{lemma}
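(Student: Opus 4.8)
The plan is to read both sides as group homomorphisms $\langle V_F\rangle\to\mathbb{R}$ and to squeeze one against the other. Recall that $\xi_F$ is, by construction, the Busemann point determined by the geodesic ray $\gamma\colon\mathbb{N}\to G$ with $\gamma(n)$ equal to the length-$n$ prefix of the infinite word $w_Fw_F\cdots$; in particular $\ell_S(\gamma(n))=n$ and every letter of $\gamma(n)$ lies in $V_F$. Since a geodesic ray is weakly geodesic, Theorem \ref{Busemann} applies to $\varphi_h^{\ell_S}\in\mathcal{G}(G,\ell_S)$ and yields
\[
\varphi_h^{\ell_S}(\xi_F)=\lim_{n\to\infty}\varphi_h^{\ell_S}(\gamma(n))=\lim_{n\to\infty}\bigl(n-\ell_S(h^{-1}\gamma(n))\bigr)
\]
for every $h\in\langle V_F\rangle$, so the statement reduces to the asymptotics of $\ell_S(h^{-1}\gamma(n))$. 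Moreover, since $\langle V_F\rangle$ stabilises $\xi_F$, the $1$-cocycle identity of Lemma \ref{1-cocycle} gives $\varphi_{gh}^{\ell_S}(\xi_F)=\varphi_h^{\ell_S}(g^{-1}.\xi_F)+\varphi_g^{\ell_S}(\xi_F)=\varphi_h^{\ell_S}(\xi_F)+\varphi_g^{\ell_S}(\xi_F)$ for $g,h\in\langle V_F\rangle$, so $h\mapsto\varphi_h^{\ell_S}(\xi_F)$ is a homomorphism; the target $h\mapsto\sigma_F\circ p_G(h)$ is visibly one as well.

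For the upper estimate I would exploit that the support functional is dominated by $\ell_S$ after composition with $p_G$. Indeed $\sigma_F\circ p_G(s)\le1=\ell_S(s)$ for every $s\in S$, and writing any $g\in G$ as a word of length $\ell_S(g)$ in $S$ and summing gives $\sigma_F\circ p_G(g)\le\ell_S(g)$ for all $g\in G$. Applying this to $g=h^{-1}\gamma(n)$ and using that $\sigma_F\circ p_G$ is a homomorphism together with $\sigma_F\circ p_G(\gamma(n))=n$ (each of the $n$ letters of $\gamma(n)$ lies in $V_F$, hence contributes exactly $1$) yields
\[
\ell_S(h^{-1}\gamma(n))\ge\sigma_F\circ p_G(h^{-1}\gamma(n))=n-\sigma_F\circ p_G(h).
\]
Feeding this into the limit above produces the one-sided bound $\varphi_h^{\ell_S}(\xi_F)\le\sigma_F\circ p_G(h)$, valid for all $h\in\langle V_F\rangle$ simultaneously.

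To turn this into an equality I would invoke the oddness of both maps. Taking $g=h$ and the second argument $h^{-1}$ in Lemma \ref{1-cocycle}, together with $\varphi_e^{\ell_S}=0$ and $h^{-1}.\xi_F=\xi_F$, gives $\varphi_{h^{-1}}^{\ell_S}(\xi_F)=-\varphi_h^{\ell_S}(\xi_F)$, while trivially $\sigma_F\circ p_G(h^{-1})=-\sigma_F\circ p_G(h)$. Replacing $h$ by $h^{-1}$ in the inequality of the previous paragraph therefore reads $-\varphi_h^{\ell_S}(\xi_F)\le-\sigma_F\circ p_G(h)$, i.e.\ $\varphi_h^{\ell_S}(\xi_F)\ge\sigma_F\circ p_G(h)$, and the two bounds combine to the asserted identity.

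The points that require care are essentially bookkeeping rather than conceptual: confirming that the limit in the first display is exactly the one supplied by Theorem \ref{Busemann}, so that evaluation at $\xi_F$ may legitimately be computed along $\gamma$, and verifying that every letter of each prefix $\gamma(n)$ lies in $V_F$, so that both $\ell_S(\gamma(n))$ and $\sigma_F\circ p_G(\gamma(n))$ equal $n$ — both facts being built into Walsh's construction of $w_F$. The pleasant feature of this route is the symmetrisation step: the $G$-invariance of $\xi_F$ upgrades a single inequality holding for all $h$ at once into an equality, so that no separate check on the generating set $V_F$ of $\langle V_F\rangle$ is needed.
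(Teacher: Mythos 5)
Your argument is correct, but it takes a genuinely different route from the paper's. The paper proves the identity by a direct computation of the limit $\lim_i\bigl(\ell_S(w_F^i)-\ell_S(h^{-1}w_F^i)\bigr)$: it invokes \cite[Lemma 4.3]{Walsh07}, which guarantees that for $h\in\langle V_F\rangle$ the element $h^{-1}w_F^i$ is eventually a positive word in the letters of $V_F$, together with the exact formula $\ell_S(g)=\sigma_F\circ p_G(g)$ for such positive words (as in \cite[Lemma 4.1]{Walsh07}), so the equality drops out in one line. You instead use only the soft global estimate $\sigma_F\circ p_G\leq\ell_S$ (valid for every element of $G$ by summing $\sigma_F\circ p_G(s)\leq 1$ over a geodesic spelling) to obtain the one-sided bound $\varphi_h^{\ell_S}(\xi_F)\leq\sigma_F\circ p_G(h)$, and then upgrade it to an equality by the substitution $h\mapsto h^{-1}$; this upgrade hinges on the fact that $\langle V_F\rangle$ fixes $\xi_F$, which via the cocycle identity of Lemma \ref{1-cocycle} makes $h\mapsto\varphi_h^{\ell_S}(\xi_F)$ a homomorphism on $\langle V_F\rangle$ (note that oddness of this map is \emph{not} automatic without the fixed-point property, so you are right to flag it). The trade-off is clear: your route avoids the finer combinatorial input of Walsh's Lemma 4.3 but leans on the stabilizer statement, itself one of Walsh's main results, which the paper quotes just before the lemma and is therefore fair to use; the paper's route is shorter and does not need the stabilizer fact at all. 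The remaining ingredients you rely on --- that every prefix of $w_Fw_F\cdots$ is geodesic so that $\ell_S(\gamma(n))=n$, that all its letters lie in $V_F$ so that $\sigma_F\circ p_G(\gamma(n))=n$, and that Theorem \ref{Busemann} lets you evaluate $\varphi_h^{\ell_S}$ at $\xi_F$ along $\gamma$ --- are all built into the construction recalled in the paper, so there is no gap.
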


\begin{proof} By \cite[Lemma 4.3]{Walsh07}, there exists $i_{0}\in\mathbb{N}$
such that for all $i\geq i_{0}$, the element $h^{-1}w_{F}^{i}$ can
be written as a product of elements of $V_{F}$. As in the proof of
\cite[Lemma 4.1]{Walsh07}, one deduced that $\left|h^{-1}w_{F}^{i}\right|=\sigma_{F}\circ p_{G}(h^{-1}w_{F}^{i})$
and $\left|w_{F}^{i}\right|=\sigma_{F}\circ p_{G}(w_{F}^{i})$ for
$i\geq i_{0}$ so that
\[
\varphi_{h}(\xi_{F})=\lim_{i\rightarrow\infty}(\sigma_{F}\circ p_{G}(w_{F}^{i})-\sigma_{F}\circ p_{G}(h^{-1}w_{F}^{i}))=\sigma_{F}\circ p_{G}(h).
\]
\end{proof}

Lemma \ref{EveluationIdentity} implies that nilpotent groups equipped
with word length functions contain finite index subgroups that satisfy a property that is close to being
separated.

\begin{proposition} \label{SeparatenessVariant}Let $G$ be a finitely
generated discrete nilpotent group $G$ with finite generating set
$S=S^{-1}$. Then there exists a finite index subgroup $H$ of $G$
such that every group homomorphism $H\rightarrow\mathbb{R}$ that
vanishes on $H\cap[G,G]$ can be written as a linear combination of
maps of the form $h\mapsto\mu(\varphi_{h}^{\ell_{S}})$, where $\mu:\ell^{\infty}(G)\rightarrow\mathbb{C}$
is an $H$-invariant state. \end{proposition}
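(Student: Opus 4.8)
The plan is to take $H$ to be the intersection $\bigcap_{F\in\mathcal{F}}\langle V_F\rangle$ of the stabilizers of the finitely many Busemann points $\xi_F$. Since each $\langle V_F\rangle$ has finite index in $G$ and $\mathcal{F}$ is finite, $H$ is again of finite index in $G$, and as a subgroup of the nilpotent (hence amenable) group $G$ it is amenable. By construction $H\subseteq\langle V_F\rangle$ for every facet $F$, so $H$ fixes each $\xi_F$.

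The first main step is to produce, for every facet $F\in\mathcal{F}$, an $H$-invariant state $\mu_F$ on $\ell^\infty(G)$ with $\mu_F(\varphi_h^{\ell_S})=\sigma_F\circ p_G(h)$ for all $h\in H$. This mirrors the construction in the proof of Theorem \ref{StrongerResult}: evaluation at the $H$-fixed point $\xi_F$ gives an $H$-invariant (in fact multiplicative) state $\nu_F$ on $C(\overline{G}^{\ell_S})$, which induces a conditional expectation $C(\overline{G}^{\ell_S})\rtimes_{\beta|_H,r}H\rightarrow C_r^{\ast}(H)$, $f\lambda_h\mapsto\nu_F(f)\lambda_h$. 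Composing this with the trivial character $\chi$ on $C_r^{\ast}(H)$, which is bounded and multiplicative by the amenability of $H$ (see \cite[Theorem 2.6.8]{BrownOzawa}), and extending to $\mathcal{B}(\ell^2(G))$ via the multiplicative-domain argument of \cite[Proposition 1.5.7]{BrownOzawa}, one obtains a state whose restriction to $\ell^\infty(G)$ is the desired $H$-invariant functional $\mu_F$. The identification of $C(\overline{G}^{\ell_S})\rtimes_{\beta|_H,r}H$ with the concrete algebra of multiplication operators generated by $C_r^{\ast}(H)$, $C_0(G)$ and $\{\varphi_g^{\ell_S}\}$ is supplied by Proposition \ref{Isomorphism-1} together with Fell's absorption principle \cite[Proposition 4.1.7]{BrownOzawa}. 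Then, since $H\subseteq\langle V_F\rangle$, Lemma \ref{EveluationIdentity} yields $\mu_F(\varphi_h^{\ell_S})=\varphi_h^{\ell_S}(\xi_F)=\sigma_F\circ p_G(h)$ for every $h\in H$.

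It then remains a purely linear-algebraic matter to write an arbitrary homomorphism $\psi:H\rightarrow\mathbb{R}$ vanishing on $H\cap[G,G]$ as a linear combination of the maps $h\mapsto\mu_F(\varphi_h^{\ell_S})=\sigma_F\circ p_G(h)$. First I would show that $\psi$ factors through $p_G|_H$: if $g\in H\cap\ker(p_G)$ then $p_G(g)$ is torsion in $G/[G,G]$, so $g^k\in H\cap[G,G]$ for some $k\geq1$, whence $k\,\psi(g)=0$ and thus $\psi(g)=0$. As $H$ has finite index, $p_G(H)$ is a full-rank lattice in $\mathbb{R}^m$, so the induced homomorphism on $p_G(H)$ extends uniquely to a linear functional $\widetilde{\psi}\in(\mathbb{R}^m)^{\ast}$. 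Because $S=S^{-1}$ the set $p_G(S)$ is symmetric about the origin and, since it generates $p_G(G)=\mathbb{Z}^m$, it spans $\mathbb{R}^m$; hence $K_S$ is a full-dimensional symmetric polytope containing $0$ in its interior. Writing $K_S=\{x\in\mathbb{R}^m\mid\sigma_F(x)\leq1,\ F\in\mathcal{F}\}$, boundedness of $K_S$ forces the support functionals $\{\sigma_F\}_{F\in\mathcal{F}}$ to span $(\mathbb{R}^m)^{\ast}$. Therefore $\widetilde{\psi}=\sum_{F\in\mathcal{F}}c_F\sigma_F$ for suitable $c_F\in\mathbb{R}$, and consequently $\psi(h)=\sum_{F\in\mathcal{F}}c_F\,\mu_F(\varphi_h^{\ell_S})$ for all $h\in H$, as required.

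The main obstacle is the first step: checking that $\mu_F$ is genuinely $H$-invariant and that the evaluation identity $\mu_F(\varphi_h^{\ell_S})=\sigma_F\circ p_G(h)$ survives the passage through the crossed product and the extension to $\mathcal{B}(\ell^2(G))$. This is a careful reprise of the argument proving Theorem \ref{StrongerResult}; the only genuinely new feature is that $\xi_F$ is stabilized by $H$ rather than by all of $G$, which is precisely why one must work with $C_r^{\ast}(H)$ and exploit the amenability of $H$. By contrast, once $K_S$ is known to be full-dimensional and symmetric, the concluding linear-algebra step is routine.
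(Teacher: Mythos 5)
Your proposal is correct and follows essentially the same route as the paper: take $H=\bigcap_{F\in\mathcal{F}}\langle V_F\rangle$, extend evaluation at the $H$-fixed Busemann points $\xi_F$ to $H$-invariant states $\mu_F$ on $\ell^\infty(G)$, identify $\mu_F(\varphi_h^{\ell_S})=\sigma_F\circ p_G(h)$ via Lemma \ref{EveluationIdentity}, and finish by showing the support functionals span $\text{Hom}(\mathbb{R}^m,\mathbb{R})$. The only (harmless) differences are that you spell out the crossed-product extension argument that the paper merely asserts, and you derive the spanning of the $\sigma_F$ from boundedness of $K_S$ rather than the paper's orthogonal-complement contradiction; both are equivalent.
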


\begin{proof} For every $F\in\mathcal{F}$, the group $\left\langle V_{F}\right\rangle $
has finite index in $G$, so $H:=\bigcap_{F\in\mathcal{F}}\left\langle V_{F}\right\rangle $
is a subgroup of finite index as well. The evaluation maps $\mathcal{G}(G,\ell_{S})\cong C(\overline{G}^{\ell_{S}})\rightarrow\mathbb{C}$,
$f\mapsto f(\xi_{F})$ with $F\in\mathcal{F}$ extend to $H$-invariant
states on $\ell^{\infty}(G)$ that we denote by $\mu_{F}$. As in
Lemma \ref{InducedHomomorphism}, one obtains that $(\widehat{\mu_{F}})_{H}:\text{im}(p_{H})\rightarrow\mathbb{R}$
given by $p_{H}(h)\mapsto\mu_{F}(\varphi_{h}^{\ell_{S}})$ is a well-defined
group homomorphism. By Lemma \ref{EveluationIdentity},
\[
\widehat{(\mu_{F})}_{H}\circ p_{H}(h)=\mu_{F}(\varphi_{h}^{\ell_{S}})=\sigma_{F}\circ p_{G}(h)
\]
for every $h\in H$, and therefore, $\widehat{(\mu_{F})}_{H}\circ p_{H}=(\sigma_{F}\circ p_{G})|_{H}$.
Now, every group homomorphism $\mathbb{Z}^{m}\rightarrow\mathbb{R}$
canonically extends to $\mathbb{R}^{m}$. Further, every group homomorphism
in $\text{Hom}(\mathbb{R}^{m},\mathbb{R})$ can be written as a linear
combination of support functionals $\sigma_{F}$, $F\in\mathcal{F}$.
Indeed, assume that $\text{Hom}(\mathbb{R}^{m},\mathbb{R})\setminus\text{Span}\{\sigma_{F}\mid F\in\mathcal{F}\}$
is non-empty and denote the canonical orthonormal basis of $\mathbb{R}^{m}$
by $(e_{i})_{i=1,...,m}$. Then there exists a non-trivial vector
$v$ in the orthogonal complement (with respect to the canonical inner product) of $\text{Span}\{(\sigma_{F}(e_{i}))_{i=1,...,m}\in\mathbb{R}^{m}\mid F\in\mathcal{F}\}\subseteq\mathbb{R}^{m}$.
Without loss of generality, we can assume that $v$ is contained in
some facet $F\in\mathcal{F}$. But then 
\[
1=\sigma_{F}(v)=\left\langle v,(\sigma_{F}(e_{i}))_{i=1,...,m}\right\rangle =0,
\]
which is a contradiction. Hence, $\text{Span}\{\sigma_{F}\mid F\in\mathcal{F}\}=\text{Hom}(\mathbb{R}^{m},\mathbb{R})$
as claimed. By using this, we obtain that every group homomorphism
$H\rightarrow\mathbb{R}$ that vanishes on $H\cap[G,G]$ can be written
as a linear combination of the maps $\widehat{(\mu_{F})}_{H}\circ p_{H}$,
$F\in\mathcal{F}$. \end{proof}

It can be checked that the property in Proposition \ref{SeparatenessVariant}
allows to prove a variant of Proposition \ref{DiagonalApproximation}
and Theorem \ref{MainTheorem} for nilpotent groups. Since the following theorem does not
lead to interesting new examples of quantum metric spaces and since
the proof is similar to the one in Subsection \ref{MainPart}, we
leave the details to the reader.

\begin{theorem} \label{NilpotentTheorem} Let $(\mathcal{A},\mathcal{H}_{A},D_{A})$
be a non-degenerate odd spectral triple on a separable unital C$^{\ast}$-algebra
$A$ and assume that the induced Lipschitz semi-norm $L_{D_{A}}(a):=\left\Vert [D_{A},a]\right\Vert ,a\in\mathcal{A}$
defines a compact quantum metric space $(A,L_{D_{A}})$. Let further
$\alpha:G\rightarrow\text{Aut}(A)$ be a metrically equicontinuous
action of a finitely generated discrete nilpotent group $G$ equipped
with a word length function. As before, define $\mathcal{H}:=\mathcal{H}_{A}\otimes\ell^{2}(G)$
and $D$ as in Subsection \ref{subsec:Crossed-product-C-algebras}.
Then the even spectral triple $(C_{c}(G,\mathcal{A}),\mathcal{H}\oplus\mathcal{H},D)$
is a spectral metric space if and only if for every $h\in G$,
the set of all elements $x=\sum_{g\in[G,G]}a_{g}\lambda_{gh}\in C_{c}(G,\mathcal{A})$
with $(a_{g})_{g\in[G,G]}\subseteq\mathcal{A}$ satisfying $\left\Vert [D_{A}\otimes1,x]\right\Vert \leq1$
and $\left\Vert [1\otimes M_{\ell},x]\right\Vert \leq1$ has totally
bounded image in $(A\rtimes_{\alpha,r}G)/\mathbb{C}1$. \end{theorem}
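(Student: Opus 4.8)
The plan is to transplant the proof of Theorem \ref{MainTheorem} into the nilpotent setting, using Proposition \ref{SeparatenessVariant} in place of the separatedness hypothesis. The ``only if'' direction is immediate: for each $h\in G$ the set of $x=\sum_{g\in[G,G]}a_{g}\lambda_{gh}$ with $\left\Vert[D_{A}\otimes1,x]\right\Vert\leq1$ and $\left\Vert[1\otimes M_{\ell},x]\right\Vert\leq1$ is contained in the set appearing in Proposition \ref{QuantumMetricSpaceCharacterization}, so if the triple satisfies the Lipschitz condition then the former set inherits the total boundedness of the latter's image in $(A\rtimes_{\alpha,r}G)/\mathbb{C}1$. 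For the ``if'' direction I would fix, via Proposition \ref{SeparatenessVariant}, the finite index subgroup $H=\bigcap_{F\in\mathcal{F}}\left\langle V_{F}\right\rangle$ together with the $H$-invariant states $\mu_{F}$ on $\ell^{\infty}(G)$ obtained by evaluating at Walsh's Busemann points $\xi_{F}$, and then establish a nilpotent analog of Proposition \ref{DiagonalApproximation}.

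For this analog I would argue exactly as in the proof of Proposition \ref{DiagonalApproximation}, with the following substitutions. Writing $p_{G}:G\twoheadrightarrow\mathbb{Z}^{m}$ for the projection onto the torsion-free part of the Abelianization and $p_{i}:\mathbb{Z}^{m}\to\mathbb{Z}$ for the coordinate projections, Lemma \ref{EveluationIdentity} yields $\mu_{F}(\varphi_{h}^{\ell})=\sigma_{F}\circ p_{G}(h)$ for $h\in H$; since the support functionals $\sigma_{F}$ span $\text{Hom}(\mathbb{R}^{m},\mathbb{R})$ (as shown in the proof of Proposition \ref{SeparatenessVariant}), one chooses real coefficients so that the functionals $\phi_{i}:=\sum_{F}c_{iF}\mu_{F}$ satisfy $\phi_{i}(\varphi_{h}^{\ell})=p_{i}\circ p_{G}(h)$ for all $h\in H$. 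Composing with the isomorphism of Proposition \ref{Isomorphism-1} and Fell's absorption principle (\cite[Proposition 4.1.7]{BrownOzawa}), each $\phi_{i}$ then yields a bounded map $P_{i}:\mathcal{C}(A,H,\ell)\to A\rtimes_{\alpha|_{H},r}H$. The decisive point is that the relevant homomorphisms $p_{i}\circ p_{G}:G\to\mathbb{Z}$ are defined on all of $G$, so Lemma \ref{OzawaRieffel} applies directly with $M_{p_{i}\circ p_{G}}\delta_{g}=p_{i}\circ p_{G}(g)\delta_{g}$. Applying $P_{i}$ to $[1\otimes M_{\ell},x]\lambda_{g^{-1}}$ and using the $H$-invariance of $\phi_{i}$ together with the cocycle identity (Lemma \ref{1-cocycle}, Lemma \ref{Commutator identity}) bounds $\left\Vert[1\otimes M_{p_{i}\circ p_{G}},x\lambda_{g^{-1}}]+\phi_{i}(\varphi_{g}^{\ell})\,x\lambda_{g^{-1}}\right\Vert$ by $\max_{i}\left\Vert P_{i}\right\Vert$; feeding this into Lemma \ref{OzawaRieffel} cuts off, up to arbitrarily small norm error, the part of $x$ on which $|p_{i}\circ p_{G}|$ exceeds a threshold $N_{i}$. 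Iterating over $i=1,\dots,m$ confines the approximant's support to $\{w\mid|p_{i}\circ p_{G}(w)|\leq N_{i}\text{ for all }i\}$, i.e. to the $p_{G}$-preimage of a bounded subset of $\mathbb{Z}^{m}$; since $\ker(p_{G})$ contains $[G,G]$ with finite quotient $\ker(p_{G})/[G,G]$, this preimage is a finite union $\bigcup_{j}[G,G]w_{j}$ of $[G,G]$-cosets, which is exactly the required output.

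With this diagonal approximation in hand, the endgame copies the ``if'' direction of Theorem \ref{MainTheorem}. By Lemma \ref{FiniteIndex} it suffices to show that for each $g\in G$ the set $\mathcal{Q}_{g}$ of $Hg$-supported elements with both commutators bounded by $1$ has totally bounded image. Given $\varepsilon>0$, the nilpotent analog approximates each $x\in\mathcal{Q}_{g}$ to within $\varepsilon/4$ by some $y$ supported on $\bigcup_{j}[G,G]w_{j}$; applying the conditional expectation $\mathbb{E}_{[G,G]}$ of Lemma \ref{ConditionalExpectation-1} to the restriction $y_{j}$ of $y$ to the $j$-th coset gives $\left\Vert[D_{A}\otimes1,y_{j}]\right\Vert\leq\delta$ and $\left\Vert[1\otimes M_{\ell},y_{j}]\right\Vert\leq\delta$, so that $y_{j}$ lies in $\delta$ times the $[G,G]w_{j}$-supported set of the theorem's hypothesis. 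Since each such set has totally bounded image by assumption, so does the finite sum, and the triangle-inequality $\varepsilon$-net argument of Theorem \ref{MainTheorem} upgrades this to total boundedness of the image of $\mathcal{Q}_{g}$ itself.

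The main obstacle is the construction and boundedness of the maps $P_{i}$ from only $H$-invariant (rather than $G$-invariant) states: one must verify that evaluation at the fixed Busemann point $\xi_{F}$, invariant merely under the stabilizer $\left\langle V_{F}\right\rangle\supseteq H$, still induces a well-defined slice map $\mathcal{C}(A,H,\ell)\to A\rtimes_{\alpha|_{H},r}H$ after passing through Proposition \ref{Isomorphism-1} and Fell's absorption. The remaining technical burden---the cocycle bookkeeping feeding Lemma \ref{OzawaRieffel} and the identification of the reduced support with finitely many $[G,G]$-cosets---is routine once one observes that, in contrast with Theorem \ref{MainTheorem} where the homomorphisms factor through the Abelianization of $H$ and force a reduction to $[H,H]$-cosets, here they factor through $p_{G}$ on all of $G$, which is precisely why the reduction lands on cosets of $[G,G]$.
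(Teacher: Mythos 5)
Your proposal is correct and follows exactly the route the paper intends: the paper itself omits the proof of Theorem \ref{NilpotentTheorem}, stating only that Proposition \ref{SeparatenessVariant} allows one to prove variants of Proposition \ref{DiagonalApproximation} and Theorem \ref{MainTheorem}, and your argument is a faithful fleshing-out of that plan, including the key observations that the homomorphisms $p_{i}\circ p_{G}$ are defined on all of $G$ (so the reduction lands on $[G,G]$-cosets rather than $[H,H]$-cosets) and that $H$-invariance of the states $\mu_{F}$ suffices to build the slice maps $P_{i}$ onto $A\rtimes_{\alpha|_{H},r}H$. The point you flag as the main obstacle does resolve affirmatively, since the slice map only needs invariance under the acting group $H$ of the crossed product being sliced onto.
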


\begin{example} Consider the discrete Heisenberg group 
\[
H_{3}:=\left\{ \left(\begin{array}{ccc}
1 & x & z\\
0 & 1 & y\\
0 & 0 & 1
\end{array}\right)\mid x,y,z\in\mathbb{Z}\right\} 
\]
and define elements
\[
a:=\left(\begin{array}{ccc}
1 & 1 & 0\\
0 & 1 & 0\\
0 & 0 & 1
\end{array}\right),\hspace*{1em}b:=\left(\begin{array}{ccc}
1 & 0 & 0\\
0 & 1 & 1\\
0 & 0 & 1
\end{array}\right),\hspace*{1em}c:=\left(\begin{array}{ccc}
1 & 0 & 1\\
0 & 1 & 0\\
0 & 0 & 1
\end{array}\right).
\]
The set $S:=\{a,a^{-1},b,b^{-1}\}$ generates $H_{3}$. Let $\ell:=\ell_{S}$
be the corresponding word length function. The commutator subgroup
of $H_{3}$ coincides with its center which is given by the cyclic
group $\left\langle c\right\rangle \cong\mathbb{Z}$. Let $(\mathcal{A},\mathcal{H}_{A},D_{A})$,
$L_{D_{A}}$, $\alpha$, $\mathcal{H}$, and $D$ be as in Theorem \ref{NilpotentTheorem}.
Then the theorem implies that $(C_{c}(G,\mathcal{A}),\mathcal{H}\oplus\mathcal{H},D)$
is a spectral metric space if and only if for every $h\in H_{3}$,
the set of all elements $x=\sum_{i=0}^{\infty}a_{i}\lambda_{c^{i}h}\in C_{c}(H_{3},\mathcal{A})$
with $(a_{i})_{i\in\mathbb{N}}\subseteq\mathcal{A}$ satisfying $\left\Vert [D_{A}\otimes1,x]\right\Vert \leq1$
and $\left\Vert [1\otimes M_{\ell},x]\right\Vert \leq1$ has totally
bounded image in $(A\rtimes_{\alpha,r}G)/\mathbb{C}1$. It would be
interesting to see if our methods can be extended to give an analog
to Corollary \ref{VirtuallyAbelian} in this setting (or even for general
nilpotent groups). However, $\ell(c^{i})=2\lceil 2\sqrt{|i|}\rceil $
for every $i\in\mathbb{N}$ by \cite{Blachere}. From this, it can
be deduced that every invariant mean $\ell^{\infty}(\left\langle c\right\rangle )\rightarrow\mathbb{C}$
must vanish on the elements $\varphi_{g}^{\ell|_{\left\langle c\right\rangle }}$,
$g\in \left\langle c\right\rangle $. The restriction of $\ell$ to $\left\langle c\right\rangle \cong\mathbb{Z}$
hence provides a natural example of a group and a length function on it
that is not separated. \end{example}

\vspace{3mm}


\section{Examples}

\vspace{3mm}

In this section we discuss natural examples of crossed products, that are covered by the results
of the previous sections. Our selection extends the one in \cite{HSWZ13}. 

\vspace{3mm}


\subsection{Actions on AF-algebras}

We begin by reminding the reader of a general construction by Christensen
and Ivan \cite{CI06} that allows associating spectral metric spaces with AF-algebras. This construction
was also employed in \cite[Subsection 3.3]{HSWZ13}. \\

Recall that an \emph{AF-algebra} is an inductive limit of a sequence
of finite-dimensional C$^{\ast}$-algebras. Given a unital AF-algebra
$A$, let $(\mathcal{A}_{i})_{i\in\mathbb{N}}\subseteq A$ with $\mathcal{A}_{0}:=\mathbb{C}1$
and $A=\overline{\bigcup_{i\in\mathbb{N}}\mathcal{A}_{i}}$ be an
increasing sequence of finite-dimensional C$^{\ast}$-algebras and
let $\phi$ be a faithful state on $A$. We call $(\mathcal{A}_{i})_{i\in\mathbb{N}}$
an \emph{AF-filtration}. Write $\pi_{\phi}$ for the (faithful) GNS-representation
of $A$ associated with $\phi$ and denote the corresponding GNS-Hilbert
space by $L^{2}(A,\phi)$. We will further write $\Omega_{\phi}\in L^{2}(A,\phi)$
for the canonical cyclic vector. Using this data, one can define a
sequence $(H_{i})_{i\in\mathbb{N}}$ of pairwise orthogonal finite-dimensional
subspaces of $L^{2}(A,\phi)$ via $H_{0}:=\pi_{\phi}(A_{0})\Omega_{\phi}=\mathbb{C}\Omega_{\phi}$
and $H_{i}:=\pi_{\phi}(A_{i})\Omega_{\phi}\cap(\pi_{\phi}(A_{i-1})\Omega_{\phi})^{\perp}$
for $i\in\mathbb{N}_{\geq1}$. Write $Q_{i}$, $i\in\mathbb{N}$ for
the orthogonal projection onto $H_{i}$. As was argued in \cite[Theorem 2.1]{CI06},
there exists a sequence $(\alpha_{i})_{i\in\mathbb{N}}$ of real numbers
with $\alpha_{0}=0$ and $|\alpha_{i}|\rightarrow\infty$ such that
the odd spectral triple $(\mathcal{A},L^{2}(A,\phi),D)$ with $\mathcal{A}:=\bigcup_{i\in\mathbb{N}}A_{i}$
and $D:=\sum_{i\in\mathbb{N}}\lambda_{i}Q_{i}$ is a spectral metric space.

Now assume that $\alpha:G\rightarrow\text{Aut}(A)$ is an action of
a discrete group $G$ on $A$, satisfying $\alpha_{g}(A_{i})\subseteq A_{i}$
for every $g\in G$, $i\in\mathbb{N}$. Since the elements of $A_{i}$
commute with the projections $Q_{j}$ for $j>i$, we obtain that for
$x\in A_{i}$,
\[
\sup_{g\in G}\Vert[D,\alpha_{g}(x)]\Vert=\sup_{g\in G}\Vert\sum_{j\leq i}\lambda_{j}[Q_{j},\alpha_{g}(x)]\Vert\leq\sum_{j\leq i}2\left|\lambda_{j}\right|\Vert x\Vert < \infty,
\]
that is, the action $\alpha$ is metrically equicontinuous. In particular,
if $G$ is a virtually Abelian group that is finitely generated by
a set $S$ with $S=S^{-1}$ and if $\ell:G\rightarrow\mathbb{R}_{+}$
is the corresponding word length function (or more generally a length
function as in Corollary \ref{OrbitMetric}), we obtain from Corollary
\ref{VirtuallyAbelian} a spectral metric space on the
crossed product $A\rtimes_{r,\alpha}G$.

\begin{example} Let $G$ be a countable residually finite discrete
group and let $(G_{i})_{i\in\mathbb{N}}$ be a strictly decreasing
sequence of finite index subgroups of $G$ with $\bigcap_{i\in\mathbb{N}}G_{i}=\{e\}$.
For every $i\in\mathbb{N}$ the group $G$ acts on $G/G_{i}$ via
left multiplication. Let $p_{i}:G/G_{i+1}\rightarrow G/G_{i}$ be
the (surjective and $G$-equivariant) map $gG_{i+1}\mapsto gG_{i}$
and consider the corresponding inverse limit $X$ given by 
\[
X:=\{(g_{i})_{i\in\mathbb{N}}\mid p_{i}(g_{i+1})=g_{i}\text{ for all }i\geq0\}\subseteq\prod_{i\in\mathbb{N}}G/G_{i}.
\]
We equip $X$ with the subspace topology of the product $\prod_{i\in\mathbb{N}}G/G_{i}$,
where each $G/G_{i}$, $i\in\mathbb{N}$ carries the discrete topology.
In this way, $X$ becomes a Cantor set, and the action of $G$ on its
left cosets extends to a continuous action on $X$ that (following
\cite[Definition 2]{CP08}; see also \cite{Krieger10}) we call
a \emph{$G$-subodometer action}. The commutative C$^{\ast}$-algebra
$C(X)$ identifies with the inductive limit $\lim_{\rightarrow}(C(G/G_{i}),\iota_{i})$,
where $\iota_{i}:C(G/G_{i})\rightarrow C(G/G_{i+1})$ is given by
$f\mapsto f\circ p$; so in particular, $C(X)$ is an AF-algebra. By
fixing a faithful state $\phi$ on $A:=C(X)$ and by setting $\mathcal{A}_{i}:=C(G/G_{i})$
for $i\in\mathbb{N}$, we can apply the construction from above to
obtain a spectral metric space $(\mathcal{A},L^{2}(C(X),\phi),D)$
on $C(X)$. In particular,
if $G$ is a virtually Abelian group that is finitely generated by
a set $S$ with $S=S^{-1}$ and if $\ell:G\rightarrow\mathbb{R}_{+}$
is the corresponding word length function (or more generally a length
function as in Corollary \ref{OrbitMetric}), we obtain a spectral metric space on $C(X)\rtimes_{r,\alpha}G$. The crossed product $C(X)\rtimes_{r,\alpha}G$
is called a \emph{generalized Bunce-Deddens algebra} (see \cite{Orfanos10}
and \cite{Carrion11}); note that for $G=\mathbb{Z}$ and $G_{i}:=(m_{1}...m_{i})\mathbb{Z}\subseteq\mathbb{Z}$
where $(m_{i})_{i\in\mathbb{N}}$ is a sequence of natural numbers
with $m_{i}\geq2$ for all $i\in\mathbb{N}$, we recover the classical
\emph{Bunce-Deddens algebras} (see \cite{BD75} and also \cite[Chapter V.3]{Davidson96}).
\end{example}

\vspace{3mm}


\subsection{Higher-dimensional non-commutative tori}

The \emph{rotation algebra} (or \emph{non-commutative 2-torus}) $\mathcal{A}_{\theta}$,
$\theta\in\mathbb{R}$, introduced in \cite{Rieffel81}, can be defined
as the universal C$^{\ast}$-algebra generated by two unitaries $u$
and $v$ subject to the relation $uv=e^{2\pi i\theta}vu$. In the
case where $\theta\in\mathbb{Z}$, $\mathcal{A}_{\theta}\cong C(\mathbb{T}^{2})$
and for irrational values of $\theta$, the C$^{\ast}$-algebra $\mathcal{A}_{\theta}$
is simple. The construction admits a natural generalization to higher
dimensions: let $\Theta:=(\theta_{i,j})_{i,j=1,...,m}$ be a skew
symmetric real $(m\times m)$-matrix (i.e., $\theta_{i,j}=-\theta_{j,i}$
for all $1\leq i,j\leq m$) and define $\mathcal{A}_{\Theta}$ to
be the universal C$^{\ast}$-algebra generated by unitaries $u_{1},...,u_{m}$
subject to relations $u_{i}u_{j}=e^{2\pi i\theta_{i,j}}u_{j}u_{i}$
for $1\leq i,j\leq m$. These C$^{\ast}$-algebras, which were defined
in \cite{Rieffel90}, are called \emph{non-commutative $m$-tori}.
Note that for $m=1$, the C$^{\ast}$-algebra $\mathcal{A}_{\Theta}$
is isomorphic to $C(\mathbb{T})$, and for $m=2$, we have $\mathcal{A}_{\Theta}=\mathcal{A}_{\theta_{1,2}}$.

Any non-commutative torus can be constructed as an iteration of crossed
products by actions of the integers $\mathbb{Z}$.
To make this precise, set $\Theta_{d}:=(\theta_{i,j})_{1\leq i,j\leq d}$ for $d=1,...,m$ and define an action 
\[
\alpha_{d}:\mathbb{Z}\curvearrowright\mathcal{A}_{\Theta_{d}}\text{ via }\alpha_{d}^n(u_{i}):=e^{-2\pi in\theta_{i,d+1}}u_{i},
\]
where the $u_{1},...,u_{d}$ are the standard generators of $\mathcal{A}_{\Theta_{d}}$.
Write $\widetilde{u}_{1},...,\widetilde{u}_{d-1}$ for the standard
generators of $\mathcal{A}_{\Theta_{d-1}}$. Then there exists an
isomorphism $\mathcal{A}_{\Theta_{d}}\cong\mathcal{A}_{\Theta_{d-1}}\rtimes_{\alpha_{d-1},r}\mathbb{Z}$
defined by $u_{i}\mapsto\widetilde{u}_{i}$ for $i=1,...,d-1$ and
$u_{d}\mapsto\lambda_{1}$. We obtain that
\[
\mathcal{A}_{\Theta}\cong(...((\mathcal{A}_{\Theta_{1}}\rtimes_{\alpha_{1}}\mathbb{Z})\rtimes_{\alpha_{2}}\mathbb{Z})...)\rtimes_{\alpha_{m-1}}\mathbb{Z}\cong(...((C(\mathbb{T})\rtimes\mathbb{Z})\rtimes\mathbb{Z})...)\rtimes\mathbb{Z},
\]
where the induced action of $\mathbb{Z}$ on $\mathbb{T}$ is given
by rotation by the angle $\theta_{1,1}$. Endow $C(\mathbb{T})$ with
the canonical non-degenerate odd spectral triple $(C^{\infty}(\mathbb{T}),L^{2}(\mathbb{T}),D)$,
where $D$ is the differentiation operator. We claim that, if we equip
the integers with word length functions (or more generally length
functions as in Corollary \ref{OrbitMetric}), a repeated application
of Corollary \ref{VirtuallyAbelian} leads to spectral metric spaces on the non-commutative $m$-tori. Since it is well-known that the spectral triple $(C^{\infty}(\mathbb{T}),L^{2}(\mathbb{T}),D)$
on $\mathcal{A}_{\Theta_{1}}\cong C(\mathbb{T})$ is metric, for this, it suffices to prove that for every $d=1,...,m-1$,
the action $\alpha_{d}:\mathbb{Z}\curvearrowright\mathcal{A}_{\Theta_{d}}$
is metrically equicontinuous. This can be proved via induction over
$d$: \\

For $d=1$, the action $\alpha_{d}:\mathbb{Z}\curvearrowright\mathcal{A}_{\Theta_{1}}\cong C(\mathbb{T})$
is obviously metrically equicontinuous. For the induction step, fix
$1\leq d\leq m-2$ and assume that the action $\alpha_{d}:\mathbb{Z}\curvearrowright\mathcal{A}_{\Theta_{d}}$
is metrically equicontinuous. We proceed by distinguishing two cases: \\

\begin{itemize}
\item \emph{Case }1: If $d$ is odd, the corresponding spectral triple on
$\mathcal{A}_{\Theta_{d}}$ is of the form $\left(\mathcal{A},\mathcal{H},D\right)$
with dense $\ast$-subalgebra $\mathcal{A}\subseteq\mathcal{A}_{\Theta_{d}}$
and corresponding faithful representation $\pi$. One easily checks
that $\alpha_{d+1}$ leaves both $\mathcal{A}$ and $C_{c}(\mathbb{Z},\mathcal{A})\subseteq\mathcal{A}_{\Theta_{d}}\rtimes_{\alpha_{d},r}\mathbb{Z}\cong\mathcal{A}_{\Theta_{d+1}}$
invariant. Further, if we denote the length function on $\mathbb{Z}$
by $\ell$,
\begin{eqnarray}
\nonumber
& & \sup_{n\in\mathbb{Z}}\left\Vert \left[\left(\begin{array}{cc}
0 & D\otimes1-i\otimes M_{\ell}\\
D\otimes1+i\otimes M_{\ell} & 0
\end{array}\right),\left(\begin{array}{cc}
\alpha_{d+1}^{n}(x) & 0\\
0 & \alpha_{d+1}^{n}(x)
\end{array}\right)\right]\right\Vert  \\
&\leq& \sup_{n\in\mathbb{N}}\left\{ 2\Vert[D\otimes1,\alpha_{d+1}^{n}(x)]\Vert+ 2\Vert[1\otimes M_{\ell},\alpha_{d+1}^{n}(x)]\Vert\right\} \label{eq:Supremum2}
\end{eqnarray}
 for every $x\in C_{c}(\mathbb{Z},\mathcal{A})$, $n\in\mathbb{Z}$.
For $x=\sum_{g\in\mathbb{Z}}a_{g}\lambda_{g}\in C_{c}(\mathbb{Z},\mathcal{A})$
with $(a_{g})_{g\in\mathbb{Z}}\subseteq\mathcal{A}$ we have $\alpha_{d+1}^{n}(x)=\sum_{g\in\mathbb{Z}}e^{-2\pi ing\theta_{d+1,d+2}}\alpha_{d+1}^{n}(a_{g})\lambda_{g}$
for all $n\in\mathbb{Z}$, and hence,
\begin{eqnarray}
\nonumber
\Vert[1\otimes M_{\ell},\alpha_{d+1}^{n}(x)]\Vert &=& \Vert\sum_{g\in\mathbb{Z}}e^{-2\pi ing \theta_{d+1,d+2}}\alpha_{d+1}^{n}(a_{g})[1\otimes M_{\ell},\lambda_{g}]\Vert \\
\nonumber
&\leq& \sum_{g\in\text{supp}(x)}\left\Vert a_{g}\right\Vert \left\Vert [1\otimes M_{\ell},\lambda_{g}]\right\Vert 
\end{eqnarray}
and 
\begin{eqnarray}
\nonumber
\Vert[D\otimes1,\alpha_{d+1}^{n}(x)]\Vert &=& \Vert\sum_{g\in\mathbb{Z}}e^{-2\pi ing \theta_{d+1,d+2}}[D\otimes1,\alpha_{d+1}^{n}(a_{g})]\lambda_{g}\Vert \\
\nonumber
&\leq& \sum_{g\in\text{supp}(x)}\Vert[D\otimes1,\alpha_{d+1}^{n}(a_{g})]\Vert.
\end{eqnarray}
Since $\Theta$ was arbitrary, it follows from the induction assumption
that the restriction of the action $\alpha_{d+1}$ to $\mathcal{A}_{\Theta_{d}}$
is metrically equicontinuous and hence that the supremum in \eqref{eq:Supremum2}
is finite. We deduce the metric equicontinuity of the action $\alpha_{d+1}:\mathbb{Z}\curvearrowright\mathcal{A}_{\Theta_{d+1}}$.
\item \emph{Case }2: If $d$ is even, the corresponding spectral triple
on $\mathcal{A}_{\Theta_{d}}$ is (since it is obtained by repeated
application of Corollary \ref{VirtuallyAbelian}) of the form 
\[
\left(\mathcal{A},\mathcal{H}\oplus\mathcal{H},\left(\begin{array}{cc}
0 & D\\
D^{\ast} & 0
\end{array}\right)\right)
\]
with dense $\ast$-subalgebra $\mathcal{A}\subseteq\mathcal{A}_{\Theta_{d}}$
and corresponding faithful representation $\pi\oplus\pi$. Again,
$\alpha_{d+1}$ leaves $\mathcal{A}$ and $C_{c}(\mathbb{Z},\mathcal{A})\subseteq\mathcal{A}_{\Theta_{d}}\rtimes_{\alpha_{d},r}\mathbb{Z}\cong\mathcal{A}_{\Theta_{d+1}}$
invariant. Further,
\begin{eqnarray}
\nonumber
& & \sup_{n\in\mathbb{Z}}\left\Vert \left[\left(\begin{array}{cc}
1\otimes M_{\ell} & D\otimes1\\
D^{\ast}\otimes1 & -1\otimes M_{\ell}
\end{array}\right),\left(\begin{array}{cc}
\alpha_{d+1}^{n}(x) & 0\\
0 & \alpha_{d+1}^{n}(x)
\end{array}\right)\right]\right\Vert \\
\nonumber
&\leq& \sup_{n\in\mathbb{Z}}\left\{ 2\Vert[1\otimes M_{\ell},\alpha_{d+1}^{n}(x)]\Vert+\Vert[D\otimes1,\alpha_{d+1}^{n}(x)]\Vert+\Vert[D^{\ast}\otimes1,\alpha_{d+1}^{n}(x)]\Vert\right\}
\end{eqnarray}
 for every $x\in C_{c}(\mathbb{Z},\mathcal{A})$, $n\in\mathbb{Z}$.
In the same way as before, one deduces that the action $\alpha_{d+1}:\mathbb{Z}\curvearrowright\mathcal{A}_{\Theta_{d+1}}$
is metrically equicontinuous.
\end{itemize}

\vspace{3mm}


\end{document}